\documentclass[11pt,a4paper]{amsart}
\usepackage{amssymb}
\usepackage{amsthm}
\usepackage{amsmath}
\usepackage{amsxtra}
\usepackage{latexsym}
\usepackage{mathrsfs}
\usepackage[all,cmtip]{xy}
\usepackage{enumitem}
\usepackage{xcolor}
\usepackage{graphicx}
\usepackage{comment}
\usepackage{mathabx,epsfig}
\usepackage{stmaryrd}
\usepackage{comment}
\usepackage{mathtools}
\usepackage{tikz-cd} 

\usepackage{hyperref}

\newcommand{\RNum}[1]{\uppercase\expandafter{\romannumeral #1\relax}}

\newtheorem{thm}{Theorem}[section]
\newtheorem{lem}[thm]{Lemma}

\newtheorem{claim}[thm]{Claim}
\newtheorem{prop}[thm]{Proposition}
\newtheorem{cor}[thm]{Corollary}

\theoremstyle{definition}
\newtheorem{defn}[thm]{Definition}
\newtheorem{rmk}[thm]{Remark}
\newtheorem{example}[thm]{Example}
\newtheorem{question}[thm]{Question}

\numberwithin{equation}{section}

\newcommand\be{\begin{equation}}
\newcommand\ba{\begin{eqnarray}}
\newcommand\ee{\end{equation}}
\newcommand\ea{\end{eqnarray}}

\def\C{{\mathbb C}}
\def\Q{{\mathbb Q}}

\def\Z{{\mathbb Z}}
\def\P{{\mathbb P}}
\def\A{{\mathbb A}}
\def\N{{\mathbb N}}

\def\O{{ \mathcal{O}}}

\def\k{{ \mathfrak{k}}}

\def\p{{\mathfrak{p}}}

\def\G{{\mathbb{G}}}

\DeclareMathOperator{\Prep}{Prep}

\DeclareMathOperator{\id}{id}
\DeclareMathOperator{\Aut}{Aut}

\DeclareMathOperator{\Orb}{Orb}

\DeclareMathOperator{\val}{val}
\DeclareMathOperator{\lcm}{lcm}

  {\end{list}}

\title[Dynamical GCD Problems and a variant of DML Conjecture]
{Dynamical GCD Problems and a Variant of the Dynamical Mordell-Lang Conjecture}
\thanks{S.Y. was supported by NNSFC grant No.~12271007. X.Z. was supported by NSERC grant RGPIN-2022-02951.}

\author{She Yang}
\address{ Peking University\\
 Beijing International Center for Mathematical Research\\
 Beijing\\
China 100871}
\email{ys-yx@pku.edu.cn}

\author{Xiao Zhong}
\address{University of Waterloo \\
Department of Pure Mathematics \\
Waterloo, Ontario \\
Canada  N2L 3G1}
\email{x48zhong@uwaterloo.ca}

\date{\today}
\subjclass[2020]{37P05, 37P30}
\keywords{arithmetic dynamics, iterated rational functions, Dynamical Mordell-Lang Conjecture, greatest common divisors}
\begin{document}

\maketitle
\begin{abstract}
  In \cite{NZ25}, the authors resolved the rational function analogue of the finiteness results for greatest common divisors of iterates of polynomials established in \cite{HT17}. These results may be viewed as dynamical generalizations of a classical problem concerning upper bounds for the greatest common divisors (GCDs) of two integer sequences studied by Bugeaud, Corvaja, and Zannier. The most delicate case arises when the maps involved are automorphisms, where the methods of \cite{NZ25} and \cite{HT17} rely heavily on Diophantine approximation and asymptotic analysis.

In the present paper, we develop an alternative approach to the automorphism case. This method is more powerful, allowing us to give complete answers to the further questions posed in \cite{HT17}. In particular, we strengthen the main theorem of \cite{HT17} and provide an alternative proof of the main theorem of \cite{NZ25} in the automorphism setting.

Moreover, we relate this dynamical GCD problem to a special case of a higher-dimensional generalization of the Dynamical Mordell--Lang Conjecture proposed by Junyi Xie. We establish this generalized conjecture when the dynamics arise from algebraic group actions. In addition, we resolve the corresponding special case associated with dynamical GCD questions when the maps involved are polynomials.

\end{abstract}
\section{Introduction}
\subsection{Historical Background}

In 2003, Bugeaud, Corvaja, and Zannier \cite{BCZ03} established an upper bound for the greatest common divisors (GCD) of two integer sequences using classical tools from Diophantine approximation. More precisely, they proved that for any multiplicatively independent integers \( a,b \geq 2 \) and for any \( \epsilon > 0 \), one has
\[
\mathrm{gcd}(a^n - 1, b^n - 1) < \exp(\epsilon n)
\]
for all sufficiently large integers \( n \geq 1 \).

Shortly afterward, Ailon and Rudnick \cite{AR04} obtained the first analogue of such a GCD-type result over function fields of characteristic \( 0 \). They showed that for any nonconstant, multiplicatively independent polynomials \( a,b \in \mathbb{C}[x] \), there exists a polynomial \( h \in \mathbb{C}[x] \) such that
\[
\mathrm{gcd}(a^n - 1, b^n - 1) \mid h
\]
for all \( n \geq 1 \).  
In fact, the Ailon-Rudnick result holds more generally for all pairs \( (m,n) \in \mathbb{N} \times \mathbb{N} \), not only when \( m = n \).

Since then, numerous generalizations and extensions of these GCD-type results have been established in various settings; see, for example, \cite{CZ13}, \cite{GHT17}, \cite{GHT18}, \cite{Gr20}, \cite{GW20}, \cite{Hu20}, \cite{Le19}, \cite{Lu05}, \cite{Si04a}, \cite{Si04b}, \cite{Si05}, and \cite{Za12}.

Among these developments, a particularly interesting dynamical analogue was first proposed by Ostafe \cite[\S4.2]{Os16}, who asked whether one could obtain a similar bound on the degree of greatest common divisors between two sequences of polynomials constructed via iteration rather than multiplicative powers.  
Subsequently, in 2017, Hsia and Tucker \cite{HT17} provided the first instance of such a GCD-type result in a dynamical setting.

Let \( f(x) \in \mathbb{C}(x) \) be a rational function with complex coefficients, and denote by \( f^n \) its \( n \)-fold composition.

\begin{defn}
Two rational functions \( f \) and \( g \) are said to be \emph{compositionally independent} if, whenever
\[
f^{i_1} \circ g^{j_1} \circ \cdots \circ f^{i_s} \circ g^{j_s}
= f^{l_1} \circ g^{m_1} \circ \cdots \circ f^{l_t} \circ g^{m_t},
\]
for some positive integers \( i_k, j_k, l_k, m_k \), it follows that \( s = t \) and \( i_k = l_k, j_k = m_k \) for all \( 1 \leq k \leq s \).  
Equivalently, the semigroup generated by \( f \) and \( g \) under composition is isomorphic to the free semigroup on two generators.
\end{defn}

Hsia and Tucker proved that if \( f, g \in \mathbb{C}[x] \) are polynomial maps of degree at least \( 2 \), \( c(x) \in \mathbb{C}[x] \), \( f \) and \( g \) are compositionally independent, and \( c \) is not an iterate of either \( f \) or \( g \), then the polynomial
\[
\mathrm{gcd}(f^m(x) - c(x), \, g^n(x) - c(x))
\]
has only finitely many possible irreducible factors across all positive integers \( m, n \) (cf. \cite[Theorem~1]{HT17}). Moreover, when one requires that $m = n$, their finiteness result of the number of possible irreducible factors also holds when $f$ and $g$ are allowed to be automorphisms.

More recently, Noytaptim and the second author \cite{NZ25} extended Hsia and Tucker's result to rational functions and addressed the following question originally proposed in \cite{HT17}:

\begin{question}[\cite{HT17}, Question~18] \label{qu: HTquestion}
Let \( f,g \) be two nonconstant, compositionally independent rational functions with complex coefficients, and let \( c \in \mathbb{C}(x) \).  
Is it true that there exist at most finitely many \( \lambda \in \mathbb{C} \) such that
\[
f^n(\lambda) = g^n(\lambda) = c(\lambda)
\]
for some positive integer \( n \)?
\end{question}

They answered this question affirmatively when at least one of \( f \) or \( g \) has degree greater than \( 1 \). Moreover, they proved that when both \( f \) and \( g \) are automorphisms of \( \mathbb{P}^1 \), the expected finiteness holds after excluding certain explicitly classified exceptional families of triples \( (f,g,c) \) (see \cite{NZ25}).

In subsequent work, they also address some special cases of \cite[Question 19]{HT17} which explore higher-dimensional generalizations of Question~\ref{qu: HTquestion}. In \cite{NZ24}, Noytaptim and the second author studied such extensions for special families of endomorphisms, including H\'enon maps, split endomorphisms on \( (\mathbb{P}^1)^n \), and regular skew products of polynomials on \( \mathbb{A}^2 \).
\subsection{A New Approach to the Automorphisms Cases}
In \cite[Remark~(1)]{HT17}, Hsia and Tucker noted that they had no clear approach to the problem when replacing
\[
\gcd(f^m(x) - c(x),\, g^n(x) - c(x))
\]
by
\[
\gcd(f^m(x) - c_1(x),\, g^n(x) - c_2(x)),
\]
where \( c_1 \) and \( c_2 \) are distinct rational functions and \( f, g \) are linear polynomials.  
Indeed, there is little reason to expect that their finiteness conclusion-stating that compositionally independent \( f \) and \( g \) yield only finitely many linear factors in the GCD-continues to hold in this setting.  
The results of \cite{NZ25} already suggest a fundamental difference between the cases of automorphisms and that of higher-degree endomorphisms.

More generally, it is natural to ask the following question without restricting to polynomial maps.

\begin{question}\label{qu: automorphisms-common-zeros}
Let \( K \) be an algebraically closed field.  
Let \( f \) and \( g \) be automorphisms of \( \mathbb{P}^1 \) defined over \( K \).  
For which pairs of rational functions \( c_1, c_2 \in K(x) \), depending on \( f \) and \( g \), do there exist infinitely many \( \lambda \in K \) such that
\[
f^m(\lambda) = c_1(\lambda) \quad \text{and} \quad g^n(\lambda) = c_2(\lambda)
\]
for some \( m, n \in \mathbb{N}^+ \)?
\end{question}

In the first part of this paper, we provide a complete answer to this question. 
Under the assumption that $c_1$ and $c_2$ are not iterates of $f$ and $g$, respectively, 
we summarize our classification results below, with the sole exception of the 
characteristic $0$ case in which both $f$ and $g$ are translation maps. 
In that situation, the problem reduces to a classification of Siegel curves 
admitting infinitely many integral points.

Note that, up to simultaneous conjugation of $f$ and $g$ by an automorphism, 
we may assume that
\[
f(x) = \alpha x + \beta,
\]
and that either
\[
g(x) = \delta x + \gamma
\quad \text{or} \quad
g(x) = \frac{x}{\gamma x + \delta},
\]
depending on whether $f$ and $g$ share a common fixed point.

When $\operatorname{char}(K) = 0$, the following two theorems describe all pairs 
$(c_1, c_2)$ sought in Question~\ref{qu: automorphisms-common-zeros}, up to conjugation.

\begin{thm}\label{thm: main-classification-common-root}
    Let $f(x) = \alpha x + \beta$ and $g(x) = \delta x + \gamma$, where $\alpha, \delta \in \C^*$ and $\beta, \gamma \in \C$. Let $c_1$ and $c_2$ be two rational functions such that $c_1 \not\in \{f^n:n \in \N^+\}$ and $c_2 \not\in \{g^n:n\in\N^+\}$. Then there are infinitely many $\lambda \in \C$ such that 
    $$ f^m(\lambda) = c_1(\lambda)$$
    $$ g^n(\lambda) = c_2(\lambda)$$
    holds for some $m,n \in \N^+$ implies the following holds:
    \begin{enumerate}
        \item If $\alpha$ and $\delta$ are not $1$, then 
         $$ c_1(x) = \mu F(x)^p\left (x - \frac{\beta}{1-\alpha}\right) + \frac{\beta}{1 - \alpha}$$
    $$c_2(x) = F(x)^q\left(x - \frac{\gamma}{1 - \delta}\right) + \frac{\gamma}{1 - \delta},$$
    where $F(x)$ is a non-constant rational function, $p,q$ are non-zero coprime integers and $\mu \in \C^*$ such that $\mu^q= \alpha^{mq}/\delta^{np}$ for infinitely many pairs of $(m,n) \in (\N^+)^2$.
    \item If $\alpha \in \C^* \setminus \{1\}$ and $\delta = 1$, then $\alpha \in \overline{\Q}^*$ and there exist non-constant rational functions $F(x)$ and $B(x)$ and $d \in \Z$ such that 
    $$ c_1(x) =  F(x)^d\left(x - \frac{\beta}{1- \alpha}\right) + \frac{\beta}{1 - \alpha}$$
    $$ c_2(x) = \gamma B(F(x)) + x.$$
    Moreover, $B(x) \in \overline{\Q}[x] $ and either $\alpha$ or $1/\alpha$ is an algebraic integer.
    \end{enumerate}
\end{thm}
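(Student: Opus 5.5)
We first conjugate each map separately to a normal form. Write $a = \beta/(1-\alpha)$ when $\alpha \neq 1$ and $b = \gamma/(1-\delta)$ when $\delta \neq 1$, and set $u_1(x) = (c_1(x)-a)/(x-a)$ and $u_2(x) = (c_2(x)-b)/(x-b)$. Since $f^m(x) - a = \alpha^m(x-a)$, the condition $f^m(\lambda) = c_1(\lambda)$ with $\lambda \neq a$ becomes $u_1(\lambda) = \alpha^m$. Similarly, in case (1) the condition $g^n(\lambda)=c_2(\lambda)$ becomes $u_2(\lambda) = \delta^n$. In case (2), $g^n(x) = x + n\gamma$, and the condition becomes $v(\lambda) = n\gamma$ with $v := c_2 - x$. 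The hypothesis that $c_1$ is not an iterate of $f$ says exactly that $u_1$ is not the constant $\alpha^m$ for any $m \ge 1$, and likewise for $u_2$ and $v$.

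\textbf{Case (1).} Consider the rational map $\Phi = (u_1,u_2)\colon \P^1 \to \P^1\times\P^1$. If either $u_i$ is constant, infinitely many $\lambda$ satisfying $u_i(\lambda)$ equal to some power would force that constant to be such a power, which is excluded. So both are nonconstant, and the image $C = \overline{\Phi(\P^1)}$ is an irreducible rational curve in $\G_m^2$. By hypothesis, $C$ contains infinitely many points $\Phi(\lambda)$ lying in the finitely generated subgroup $\Gamma = \langle \alpha,\delta\rangle \subset \G_m^2(\C)$. By Lang's theorem (Laurent's theorem on Mordell--Lang for tori), $C$ must be a translate of a one-dimensional subtorus. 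Such a translate has the form $\{y_1^q = \mu^q y_2^p\}$ with $p,q$ coprime, and it contains infinitely many points $(\alpha^m,\delta^n)$. Since $C$ is rational, its normalization is $\G_m$, parametrized by $t\mapsto(\mu t^p, t^q)$. Thus $u_1 = \mu F^p$ and $u_2 = F^q$ for a nonconstant rational function $F$, and $\mu^q\delta^{np} = \alpha^{mq}$ holds for infinitely many $(m,n)$. Unwinding the definitions of $u_1,u_2$ gives the stated formulas.

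\textbf{Case (2).} Here we look at the curve $C$ parametrized by $(u_1, v)$ in $\G_m\times\G_a$, which contains infinitely many points $(\alpha^m, n\gamma)$. If $\gamma=0$, then $g$ is the identity, which is compositionally degenerate. We handle this separately: then $c_2=x$ is an iterate of $g$, so it is excluded. Assume therefore $\gamma\neq0$. The points $(\alpha^m, n)$ lying on a curve $P(y,z)=0$ give an $S$-unit/integral-point problem: for infinitely many $m$, the value $y=\alpha^m$ has $z=n$ an integer. The plan is as follows.
\begin{enumerate}
\item Show that $\alpha$ is algebraic, by a specialization argument: a transcendental $\alpha$ would make the equations $P(\alpha^m,n)=0$ force $P$ to be of a degenerate form.
\item Apply Siegel's theorem to the curve over a number field, viewing $\alpha^m$ as an $S$-unit and $n$ as an integer. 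Infinitely many such points force the normalization of $C$ to have at most two points at infinity, arranged so that $z$ is a polynomial in a coordinate $t$ with $y=t^d$. In other words, the components $C$ are of the form $\{(t^d, \gamma B(t))\}$.
\end{enumerate}
This yields $u_1 = F^d$ and $v = \gamma B(F)$. Moreover, the integrality of $B(\alpha^{m/d})$ at infinitely many $m$ forces $B\in\overline{\Q}[x]$, and forces $\alpha$ or $1/\alpha$ to be an algebraic integer, which handles the $\alpha\to 1/\alpha$ symmetry via $d\in\Z$.

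\textbf{Main obstacle.} The hardest step is Case (2). Making Siegel's theorem apply requires controlling the denominators of $n$ alongside units, and the conclusion that $B$ is a polynomial needs an argument that the pole structure of $v$ on the normalization lies over $y\in\{0,\infty\}$. I would try to handle this first through a height/valuation comparison, $h(n)\approx \log n$ versus $h(\alpha^m)\approx m$, at places where $|\alpha|_v\neq1$. This should show that every pole of $v\circ$ parametrization lies above $0$ or $\infty$. It should also show that only one of these can occur, giving the algebraic-integer condition.
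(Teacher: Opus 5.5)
Your Case (1) is the paper's argument and is fine: Lang's theorem (Mordell--Lang) for curves in $\G_m^2$, then pulling back the parametrization $t\mapsto(\mu t^p,t^q)$ of the coset. The transcendence step of Case (2) is also the paper's dominant-term argument. It becomes rigorous with a discrete valuation trivial on $\overline{\Q}$ with $v(\alpha)\neq 0$: since the residue field has characteristic $0$, $v(a_j(n))$ is constant for all but finitely many integers $n$.

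The gap is in step 2 of Case (2). Siegel's theorem applies, because $C$ contains infinitely many $\overline{\Q}$-points and is therefore defined over $\overline{\Q}$. But it only tells you that the points at infinity are exactly the zero and the pole of $y$. So $y=\kappa t^d$ and $z$ has poles only at $t=0,\infty$. That makes $z\in\overline{\Q}[t,t^{-1}]$ a Laurent polynomial, not a polynomial. Your finite-place valuation argument removes the negative part of $z$ if some finite place has $v(\alpha)>0$, and the positive part if some finite place has $v(\alpha)<0$. This does prove that $\alpha$ or $1/\alpha$ is an algebraic integer. If $\alpha$ is a unit of $\overline{\Z}$, however, the only places with $|\alpha|_v\neq 1$ are archimedean, where integrality of $n$ imposes nothing. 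The comparison of $\log n$ with $m$ gives no contradiction either, because $n$ may grow exponentially in $m$.

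This step cannot be repaired, because the clause $B\in\overline{\Q}[x]$ is false as stated. Take $\alpha=2+\sqrt{3}$, $\beta=0$, $\delta=\gamma=1$, $c_1(x)=x^2$ and $c_2(x)=2x+x^{-1}$; neither $c_i$ is an iterate.

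For $\lambda=\alpha^m$ with $m\ge 1$ we have $f^m(\lambda)=\alpha^{2m}=c_1(\lambda)$. We also have $g^n(\lambda)=\lambda+n=c_2(\lambda)$ with $n=\alpha^m+\alpha^{-m}$. This $n$ is a positive integer, since $\alpha^{-1}=2-\sqrt{3}$ is the conjugate of $\alpha$ and $n$ is the trace of $\alpha^m$. So the hypotheses hold.

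Yet $c_1=F^d\,x$ forces $F^d=x$, hence $d=\pm1$ and $F=x^{\pm1}$. Then $B(x)=x+x^{-1}\notin\overline{\Q}[x]$.

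The correct conclusion, which your Siegel argument does establish, is $B\in\overline{\Q}[x,x^{-1}]$. One also gets $B\in\overline{\Q}[x]$, after replacing $(F,d)$ by $(1/F,-d)$ if necessary, whenever $\alpha$ is not a unit. The paper's own proof has the same hole. It applies Corvaja--Zannier's Theorem 1 to $B$ as a power series convergent at the origin, which presupposes that $B$ is regular at $0$. Moreover, for a unit $\alpha$ there is no non-archimedean $\nu$ with $\nu(\alpha)\neq 0$ to start from.
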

\begin{thm}\label{thm: main-classification-no-common-root}
    Let $f(x) = \alpha x + \beta$ and $g(x) = x/(\gamma x + \delta)$, where $\alpha, \delta, \beta, \gamma \in \C^*$. Let $c_1$ and $c_2$ be two rational functions such that $c_1 \not\in \{f^n:n \in \N^+\}$ and $c_2 \not\in \{g^n:n\in\N^+\}$. Then there are infinitely many $\lambda \in \C$ such that 
    $$ f^m(\lambda) = c_1(\lambda)$$
    $$ g^n(\lambda) = c_2(\lambda)$$
    holds for some $m,n \in \N^+$ implies the following holds:
    \begin{enumerate}
        \item If $\alpha$ and $\delta$ are not $1$, then $$ c_1(x) = \left(x- \frac{\beta}{1 - \alpha}\right)\mu F(x)^p + \frac{\beta}{1 - \alpha}$$
        $$ c_2(x) = \frac{x}{F(x)^q (1 - \gamma x/(1- \delta)) + \gamma /(1 - \delta)},$$
         where $F(x)$ is some non-constant rational function, $p,q$ are some coprime non-zero integers and $\mu \in \C^*$ such that $\mu^q= \alpha^{mq}/\delta^{np}$ for infinitely many pairs of $n,m \in \N^+$.
         \item If $\delta \in \C^* \setminus \{1\}$ and $\alpha = 1$, then $\delta \in \overline{\Q}^*$ and there exist non-constant rational functions $F(x)$ and $B(x)$ and $d \in \Z$ such that 
    $$ c_1(x) = \beta B(F(x)) + x$$
    $$ c_2(x) = x/\left((1 - F(x)^d)\frac{\gamma}{1 - \delta}x + F(x)^d\right).$$
    Moreover, $B(x) \in \overline{\Q}[x] $ and either $\delta$ or $1/\delta$ is an algebraic integer.
    \end{enumerate}
\end{thm}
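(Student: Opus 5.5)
The plan is to reduce Theorem~\ref{thm: main-classification-no-common-root} to the same mechanism as Theorem~\ref{thm: main-classification-common-root}. The idea is to linearize both maps simultaneously by coordinates adapted to their fixed points, so that iterates become multiplication by $\alpha^m$ or $\delta^n$, or translation by $m\beta$. Since $\gamma\neq 0$, $g$ fixes $0$ and $(1-\delta)/\gamma$ (when $\delta\neq1$), while $f$ fixes $\infty$ and $a:=\beta/(1-\alpha)$ (when $\alpha\neq1$).

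\textbf{Coordinates.} Write $b=\gamma/(1-\delta)$. For $f$, use $u(x)=x-a$, so that $u(f^m(x))=\alpha^m u(x)$. For $g$, use the cross-ratio coordinate $v(x)=\bigl(1/x - b\bigr)/\bigl(1-b\bigr)$, suitably normalized. This works because $1/g(x)=\delta/x+\gamma$, so $1/g^n(x)-b=\delta^n(1/x-b)$. The equation $g^n(\lambda)=c_2(\lambda)$ then becomes $1/c_2(\lambda)-b=\delta^n(1/\lambda - b)$, and $f^m(\lambda)=c_1(\lambda)$ becomes $c_1(\lambda)-a=\alpha^m(\lambda-a)$.

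\textbf{Case (1): $\alpha,\delta\ne1$.} Define the rational functions
\[
\Phi(x)=\frac{c_1(x)-a}{x-a},\qquad \Psi(x)=\frac{1/c_2(x)-b}{1/x-b}.
\]
These are non-constant. If one were constant, say $\Phi=\alpha^m$, then $c_1=f^m$ for $m>0$; otherwise that equation has finitely many roots, which is handled separately using the exclusion hypothesis. We obtain infinitely many $\lambda$ with $(\Phi(\lambda),\Psi(\lambda))=(\alpha^m,\delta^n)$, i.e.\ infinitely many points of the curve $x\mapsto(\Phi,\Psi)$ lying in the finitely generated group $\langle\alpha\rangle\times\langle\delta\rangle\subset\mathbb{G}_m^2$. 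By the Mordell--Lang theorem for $\mathbb{G}_m^2$ (Laurent's theorem), the Zariski closure of the image curve is a torsion translate of a subtorus. Hence there are coprime integers $p,q$ and a constant $\mu_0$ with $\Phi^q=\mu_0\Psi^p$ identically, and the image is the coset $\{(s,t): s^q=\mu_0t^p\}$. We then parametrize this coset by $F$ via $\Phi=\mu F^p$ and $\Psi=F^q$. This uses the fact that on the curve $\mathbb{P}^1$, the relation $\Phi^q=\mu_0\Psi^p$ with $\gcd(p,q)=1$ forces $\Phi$ and $\Psi$ to be, up to constants, the $p$-th and $q$-th powers of a single rational function (compare divisors and use $\Phi^{s}\Psi^{t}$ with $ps+qt=1$). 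Substituting back gives $c_1=\mu F^p(x-a)+a$, and solving $1/c_2-b=F^q(1/x-b)$ gives $c_2=x/(F^q(1-bx)+bx)$, which is the stated form after rewriting $\gamma/(1-\delta)$. The condition that $\mu^q=\alpha^{mq}/\delta^{np}$ holds for infinitely many $(m,n)$ comes from plugging in the infinitely many solutions. Distinct $\lambda$ give infinitely many distinct $(m,n)$ pairs, because for fixed $(m,n)$ there are finitely many $\lambda$.

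\textbf{Case (2): $\alpha=1$, $\delta\ne1$.} Now $f^m(x)=x+m\beta$, so $B_0(x):=(c_1(x)-x)/\beta$ takes the value $m\in\mathbb{N}^+$, while $\Psi(\lambda)=\delta^n$. This gives infinitely many points of the curve $(B_0,\Psi)$ in $\mathbb{N}\times\langle\delta\rangle\subset\mathbb{A}^1\times\mathbb{G}_m$. This step is the main obstacle. The plan is to use Siegel's theorem and its $S$-unit refinements, together with the Corvaja--Zannier type description of curves with infinitely many points of the form $(m,\delta^n)$, as in the argument behind Theorem~\ref{thm: main-classification-common-root}(2), with the roles of $f$ and $g$ swapped.
\begin{itemize}
\item First, $\delta$ must be algebraic: otherwise specializing a transcendental $\delta$ shows the curve must be of the form $\Psi$ constant, which is excluded.
\item Next, the function $\Psi$ restricted to the curve must have at most two poles/zeros structure compatible with $\mathbb{G}_m$ integral points, so $\Psi=F^d$ for some $F$ with divisor supported on $\{0,\infty\}$ in the relevant parametrization.
\item Then $B_0$ must be a polynomial $B$ in $F$, since $B_0$ takes integral values at infinitely many points where $F$ is an $S$-unit. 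Integrality over $\mathbb{Z}$ forces $B\in\overline{\mathbb{Q}}[x]$ and forces $\delta$ or $\delta^{-1}$ to be an algebraic integer.
\end{itemize}
This reproduces the formulas $c_1=\beta B(F)+x$ and $c_2=x/\bigl((1-F^d)bx+F^d\bigr)$. Essentially, this step transfers the result already used for Theorem~\ref{thm: main-classification-common-root}(2) through the coordinate change $v$, and I would invoke that analysis verbatim.
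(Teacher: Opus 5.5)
Your Case (1) is essentially the paper's argument (Proposition~\ref{prop: non-poly-m-n-root}): Mordell--Lang (Laurent) for the curve $x\mapsto(\Phi,\Psi)$ in $\mathbb{G}_m^2$, followed by extraction of $F$. Your B\'ezout step is a welcome addition, since the paper only asserts that $F$ exists: $F_0=\Phi^{s}\Psi^{t}$ with $ps+qt=1$ gives $\Phi=\mu_0^{t}F_0^{p}$ and $\Psi=\mu_0^{-s}F_0^{q}$. Two small corrections:
\begin{itemize}
\item The curve is a translate of a subtorus by a point of $\langle\alpha\rangle\times\langle\delta\rangle$ (up to torsion), not a \emph{torsion} translate. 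Indeed $\mu_0=\mu^q$ is in general not a root of unity, as you in fact allow.
\item Your formula $c_2=x/\bigl(F^q(1-bx)+bx\bigr)$ is correct and agrees with the paper's proof. The displayed statement instead has $+\gamma/(1-\delta)$ without the factor $x$, so you should not claim literal agreement with it.
\end{itemize}

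The genuine gap is in Case (2), at the step ``$B_0$ must be a polynomial in $F$ \dots\ integrality forces $B\in\overline{\Q}[x]$''. Your Siegel route differs from the paper's appeal to Zannier and Corvaja--Zannier. Made precise, it gives the following:
\begin{itemize}
\item $C$ is defined over $\overline{\Q}$, because it contains infinitely many algebraic points.
\item By Siegel, $C^0=C\cap(\mathbb{A}^1\times\mathbb{G}_m)$ has at most two points at infinity. It has exactly two, because $\Psi$ is a non-constant unit on $C^0$.
\item Hence $C^0\cong\mathbb{G}_m$ with a coordinate $t$, $\Psi=\kappa t^{d}$, and $B_0\in\overline{\Q}[t,t^{-1}]$.
\end{itemize}
Integrality of $B_0(\lambda)=m$ removes one side of this Laurent polynomial, but only at a non-archimedean place $v$ with $|\delta|_v\neq1$. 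When $\delta$ is a unit there is no such place, and the claim is then false. Take $\delta=3+2\sqrt2$ (its conjugate is $\delta^{-1}$), $b=\gamma/(1-\delta)$, and
\[
c_1(x)=x+\beta\,(x+x^{-1}),\qquad c_2(x)=\frac{1}{1+b-bx}.
\]
Then $\Psi(x)=x$. Every $\lambda=\delta^n$ satisfies $g^n(\lambda)=c_2(\lambda)$, and also $f^m(\lambda)=c_1(\lambda)$ with $m=\delta^n+\delta^{-n}\in\N^+$. Neither function is an iterate: $c_1$ has degree $2$, and $c_2(0)\neq0=g^n(0)$. Any representation as in (2) forces $F^d=\Psi=x$, hence $F=x^{\pm1}$ and $B(t)=t+t^{-1}\notin\overline{\Q}[t]$. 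So the clause ``moreover, $B\in\overline{\Q}[x]$'' cannot be proved by any method. For the same reason, the paper's deduction of $B\in\overline{\Q}[x]\cup\overline{\Q}[1/x]$ from \cite[Theorem 1]{CZ05} cannot be valid for units either.

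What your approach does prove is the corrected statement:
\begin{itemize}
\item $B\in\overline{\Q}[x,x^{-1}]$.
\item One of $\delta,\delta^{-1}$ is an algebraic integer. Otherwise there are places $v_1,v_2$ with $|\delta|_{v_1}>1>|\delta|_{v_2}$, and these kill both sides of $B$, forcing $B$ constant.
\item When $\delta$ is not a unit, a single such place makes $B$ a polynomial after replacing $F$ by $F^{-1}$.
\end{itemize}
Separately, your first bullet (``specializing a transcendental $\delta$'') is only a slogan. You need an actual argument, such as the paper's degree comparison in $\delta$ after specializing the other parameters (Propositions~\ref{prop: add-mul-trance-case} and~\ref{prop: rational-add-mul-trance-case}).
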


\begin{rmk}
    The statements above provide only a partial summary of the classification 
results established in Section~\ref{sec: common-auto-case} and do not necessarily reflect 
their full strength. A more detailed analysis will be given in 
Section~\ref{sec: common-auto-case}.
\end{rmk}

When $\operatorname{char}(K) >0$, we obtain the following classifications up to conjugations: 

\begin{thm}\label{thm: main-classification-positive-char}
Let $f(x)=\alpha x+\beta$ and $g(x)=\delta x+\gamma$, where $\alpha,\delta\in K\backslash\overline{\mathbb{F}_p}$ and $\beta,\gamma\in K$. Let $c_1(x),c_2(x)\in K(x)$ be rational functions such that $c_1 \not\in \{f^n : n \in \Z\}$ and $c_2 \not \in \{g^n : n \in \Z\}$. Write $$C_1(x)=\frac{c_1(x)-\frac{\beta}{1-\alpha}}{x-\frac{\beta}{1-\alpha}}$$ and $$C_2(x)=\frac{c_2(x)-\frac{\gamma}{1-\delta}}{x-\frac{\gamma}{1-\delta}}.$$

\begin{enumerate}
\item
$\bigcup\limits_{n\in\mathbb{Z}}\left\{x\in K|\ f^n(x)=c_1(x),g^n(x)=c_2(x)\right\}$ is an infinite set if and only if $C_1$ and $C_2$ are non-constant, and there exist
\begin{enumerate}
\item
$e_1\in\mathbb{Z},e_2\in\mathbb{Z}\backslash\{0\}$, and $q$ which is a power of $p$,
\item
$\alpha',\delta'\in K$ such that $\alpha'^{q-1}=\alpha$ and $\delta'^{q-1}=\delta$, and
\item
an absolutely irreducible polynomial $F(x,y)\in\mathbb{F}_q[x,y]$,
\end{enumerate}
such that $F(\alpha'^{e_2},\delta'^{e_2})=0$ and $$F\left(\frac{\alpha'^{e_2}}{\alpha^{e_1}}\cdot C_1(x),\frac{\delta'^{e_2}}{\delta^{e_1}}\cdot C_2(x)\right)$$ is identically zero.

\item
$\bigcup\limits_{(m,n)\in\mathbb{Z}^2}\left\{x\in K|\ f^m(x)=c_1(x),g^n(x)=c_2(x)\right\}$ is an infinite set if and only if $C_1$ and $C_2$ are non-constant, and there exist
\begin{enumerate}
\item
$(m_0,n_0)\in\mathbb{Z}^2,(m_1,n_1)\in\mathbb{Z}^2\backslash\{(0,0)\}$, and $q$ which is a power of $p$,
\item
$\alpha',\delta'\in K$ such that $\alpha'^{q-1}=\alpha$ and $\delta'^{q-1}=\delta$, and
\item
an absolutely irreducible polynomial $F(x,y)\in\mathbb{F}_q[x,y]$
\end{enumerate}
such that $F(\alpha'^{m_1},\delta'^{n_1})=0$ and $$F\left(\frac{\alpha'^{m_1}}{\alpha^{m_0}}\cdot C_1(x),\frac{\delta'^{n_1}}{\delta^{n_0}}\cdot C_2(x)\right)$$ is identically zero.
\end{enumerate}

If $$g(x)=\frac{x}{\gamma x+\delta},$$ then we write $$C_2(x)=x\cdot\frac{\frac{1}{c_2(x)}-\frac{\gamma}{1-\delta}}{1-\frac{\gamma x}{1-\delta}}$$ and the same statements hold.
\end{thm}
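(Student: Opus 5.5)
We conjugate to linearize the problem and then reduce it to a question about subgroups of $\G_m^2$ meeting a curve. Set $a=\beta/(1-\alpha)$ and $b=\gamma/(1-\delta)$, the fixed points of $f$ and $g$. Then $f^m(x)-a=\alpha^m(x-a)$ and $g^n(x)-b=\delta^n(x-b)$. For $\lambda\neq a,b$ and away from poles, the condition $f^m(\lambda)=c_1(\lambda)$ becomes $C_1(\lambda)=\alpha^m$, and likewise $g^n(\lambda)=c_2(\lambda)$ becomes $C_2(\lambda)=\delta^n$. In the case $g(x)=x/(\gamma x+\delta)$, the substitution $y=1/x$ turns $g$ into an affine map $y\mapsto \delta y+\gamma$ with fixed point $b$. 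The stated formula for $C_2$ is exactly $(1/c_2-b)/(1/x-b)$ rewritten, so the same reduction applies. The set in question is therefore, up to finitely many points, the preimage under $\phi=(C_1,C_2)\colon\P^1\dashrightarrow\G_m^2$ of the set $\{(\alpha^m,\delta^n)\}$ (case (2)) or of the diagonal orbit $\{(\alpha^n,\delta^n)\}$ (case (1)). If $C_1$ or $C_2$ is constant, then because $c_i$ is not an iterate, the constant is not a power $\alpha^m$ (resp. $\delta^n$), so one equation has only finitely many solutions. Hence both must be non-constant. Then $\phi$ has finite fibres, and the set is infinite iff the irreducible curve $X=\overline{\phi(\P^1)}\subset\G_m^2$ contains infinitely many points of the cyclic group $\Gamma=\langle(\alpha,\delta)\rangle$ (resp. the rank-$\le 2$ group $\Gamma=\langle\alpha\rangle\times\langle\delta\rangle$).

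The key input is Moosa–Scanlon's positive characteristic Mordell–Lang theorem for $\G_m^2$ (the theorem of Ghioca/Moosa–Scanlon on $F$-sets). It states that $X\cap\Gamma$ is a finite union of sets of the form $\gamma_0\cdot S(\gamma_1,\dots,\gamma_r;\delta)\cdot H$, where $S$ consists of sums of orbits under Frobenius powers $x\mapsto x^{q^k}$. Since $X$ is a curve and not a coset of a torus, an infinite intersection must contain an infinite set of the form $\gamma_0\cdot\{\gamma_1^{q^k}:k\ge 0\}$. Here $\gamma_0=(\alpha^{m_0},\delta^{n_0})\in\Gamma$ and $\gamma_1$ lies in the divisible hull of $\Gamma$, which accounts for the roots $\alpha',\delta'$. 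Concretely, I would show that $\gamma_1$ can be taken of the form $(\alpha'^{m_1},\delta'^{n_1})$ with $\alpha'^{q-1}=\alpha$, so that $\gamma_1^{q^k}=\gamma_1\cdot(\alpha^{m_1},\delta^{n_1})^{(q^k-1)/(q-1)}$. The curve $\gamma_0^{-1}X$ then contains infinitely many Frobenius iterates of $\gamma_1$. Since $\alpha,\delta\notin\overline{\mathbb F_p}$ these iterates are distinct, so $\gamma_0^{-1}X$ is invariant under $q$-Frobenius (after enlarging $q$). A Frobenius-invariant irreducible curve is defined over $\mathbb F_q$, i.e. it is cut out by an absolutely irreducible $F\in\mathbb F_q[x,y]$. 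This gives $F(\alpha'^{m_1}\alpha^{-m_0}C_1,\delta'^{n_1}\delta^{-n_0}C_2)\equiv0$ and $F(\alpha'^{m_1},\delta'^{n_1})=0$, up to repackaging the normalization of $\gamma_0$ and $\gamma_1$ as in the statement. In case (1) the group is cyclic, so $m_0=n_0=e_1$ and $m_1=n_1=e_2$.

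For the converse, suppose $F$ is given. The points $(\alpha'^{e_2 q^k},\delta'^{e_2q^k})$ lie on $\{F=0\}$ for all $k$, since $F$ has $\mathbb F_q$ coefficients. Rewriting $\alpha'^{e_2q^k}=\alpha'^{e_2}\alpha^{e_2(q^k-1)/(q-1)}$ shows that $\alpha^{e_1}\alpha'^{-e_2}\cdot\alpha'^{e_2q^k}$ is a power of $\alpha$, and the same holds for $\delta$. Because $\phi$ is non-constant onto the corresponding translate of the curve, these infinitely many distinct points each have a preimage $\lambda$. These preimages are infinitely many, and each satisfies the equations with a common exponent $n=e_1+e_2(q^k-1)/(q-1)$ in case (1), or with separate exponents in case (2). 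Surjectivity onto all but finitely many points of the curve holds because $\P^1\to X$ is dominant and hence surjective onto the affine part up to finitely many points.

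The main obstacle is the extraction step from Moosa–Scanlon. One must show that the "$F$-set" structure in rank $\le2$ really collapses to a single Frobenius orbit translated by an element of $\Gamma$, with $\gamma_1$ of the precise radical form $(\alpha'^{m_1},\delta'^{n_1})$. This requires controlling the $p$-divisible hull and the torsion, using $\alpha,\delta\notin\overline{\mathbb F_p}$. I would first handle this by an explicit argument: compare $X$ and its Frobenius twist $X^{(q)}$, which share infinitely many points and are therefore equal.
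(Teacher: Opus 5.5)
Your proposal follows the paper's proof step for step. You reduce to the curve $X=\overline{(C_1,C_2)(\mathbb{P}^1)}\subseteq\mathbb{G}_m^2$ meeting $\Gamma$ and use Moosa--Scanlon to get an infinite set $\{(\alpha^{m_0},\delta^{n_0})(\alpha^{m_1},\delta^{n_1})^{(q^k-1)/(q-1)}\}_{k\ge 0}\subseteq X$; you then translate by $(\alpha'^{m_1},\delta'^{n_1})(\alpha^{-m_0},\delta^{-n_0})$ to obtain a $[q]$-stable curve defined over $\mathbb{F}_q$, and the converse is the same Frobenius computation plus surjectivity of $\mathbb{P}^1\to\overline{X}$. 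Two remarks. The extraction you flag as the main obstacle is immediate: if $\gamma_0\gamma_1^{q^k}\in\Gamma$ for all $k$, then $\gamma_1^{q-1}\in\Gamma$, which gives exactly the displayed form, after which any choice of $(q-1)$-th roots $\alpha',\delta'$ works as in the paper. Your aside that $X$ is not a torus coset is unjustified (e.g.\ $C_2=1/C_1$ with $\alpha\delta$ a root of unity), but it is harmless, since an infinite coset $\gamma_0H\subseteq X\cap\Gamma$ contains $\{\gamma_0h^{(q^k-1)/(q-1)}\}_{k\ge0}$ for any non-torsion $h\in H$.
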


When \( \operatorname{char}(K) = 0 \), our main observation is that any such pair \( (c_1, c_2) \), if it exists, parametrizes a genus-\(0\) curve containing infinitely many points of the form
\[
\{ (\alpha^m, \delta^n) : m,n \in \mathbb{N}^+ \}
\]
for some \( \alpha, \delta \in \mathbb{C}^* \).  
Using the established Mordell-Lang conjecture, we are then able to determine \( c_1 \) and \( c_2 \) explicitly in terms of the coefficients of \( f \) and \( g \).

When \( \operatorname{char}(K) > 0 \), our classification provides a counterexample to the following question posed by Hsia and Tucker:

\begin{question} \label{qu: positive-char}[First part of \cite{HT17}, Question~17]
Let \( f \) and \( g \) be two compositionally independent, non-isotrivial polynomials in \( K[x] \), and let \( c \in K[x] \).  
Is it true that there exist only finitely many \( \lambda \in K \) such that, for some \( n \in \mathbb{N}^+ \),
\[
(x - \lambda) \mid \gcd(f^n(x) - c(x),\, g^n(x) - c(x))?
\]
\end{question}
\setlength{\parskip}{3pt}
\subsubsection{Further applications in characteristic $0$} At the beginning of Section~3 in \cite{HT17}, Hsia and Tucker provided an example with
\[
f(x) = 2x, \quad g(x) = x + 1, \quad c(x) = x^2,
\]
showing that there are infinitely many \( \lambda \in \mathbb{C} \) for which
\[
(x - \lambda) \mid \gcd(f^m(x) - c(x),\, g^n(x) - c(x))
\]
for some \( m, n \in \mathbb{N}^+ \).  
Thus, it appears difficult to expect a general finiteness statement in this case.  
However, in their example, \( f \) and \( g \) are \emph{not} compositionally independent.  
Using our new approach, we can now establish the following finiteness result which strengthen \cite[Theorem 1]{HT17}.

\begin{thm}\label{thm: f^m-g^n-c-poly}
Let \( f, g, c \in \mathbb{C}[x] \) be polynomials such that \( c(x) \) is not an iterate of either \( f \) or \( g \).  
Suppose that \( f \) and \( g \) are compositionally independent.  
Then there exist only finitely many \( \lambda \in \mathbb{C} \) such that
\[
f^m(\lambda) = g^n(\lambda) = c(\lambda)
\]
for some \( m, n \in \mathbb{N}^+ \).
\end{thm}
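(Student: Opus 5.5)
We split according to the degrees of $f$ and $g$. If both $f$ and $g$ have degree at least $2$, the statement is exactly \cite[Theorem~1]{HT17}: the gcd of $f^m(x)-c(x)$ and $g^n(x)-c(x)$ has only finitely many possible irreducible (hence linear) factors over all $m,n$. So the new content is the case where at least one of $f,g$ is linear.

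\emph{Mixed case.} Suppose, say, $\deg f\ge 2$ and $g(x)=\delta x+\gamma$. We fix $n$-independence as follows. For each $\lambda$ in the set, $f^m(\lambda)=c(\lambda)$ for some $m$. The argument of \cite{HT17} (or \cite{NZ25}) for a single map of degree $\ge2$ shows that outside finitely many $\lambda$, the equation $f^m(\lambda)=c(\lambda)$ forces $\lambda$ to lie in a set of bounded height when $f,c$ are defined over a number field. The general case then follows by specialization. Since $f^m-c$ has degree growing like $d^m$, the heights $h(\lambda)$ are bounded by $O(1)$ only after combining with the linear condition $g^n(\lambda)=c(\lambda)$. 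I would instead use the following route. For $\lambda$ with $f^m(\lambda)=c(\lambda)$ and $m$ large, canonical height considerations give $\hat h_f(c(\lambda))=d^m\hat h_f(\lambda)$. Meanwhile $h(c(\lambda))\le \deg c\cdot h(\lambda)+O(1)$, so $\lambda$ is $f$-preperiodic or $m$ is bounded. Bounded $m$ with $c\ne f^m$ gives finitely many $\lambda$. Preperiodic $\lambda$ satisfying $g^n(\lambda)=c(\lambda)$ is then handled by the compositional-independence argument of \cite{NZ25} applied to $f$-preperiodic points lying on curves $\{y=g^n(x)\}$.

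\emph{Both linear.} This is the heart of the proof and uses Theorems~\ref{thm: main-classification-common-root} and~\ref{thm: main-classification-no-common-root} with $c_1=c_2=c$. Two affine maps $f,g$ of $\mathbb{P}^1$ fixing $\infty$ are compositionally independent only if they generate a free semigroup. In particular, they cannot both be translations (translations commute), and they cannot share a further fixed point. Hence, after conjugating by an affine map (which keeps $c$ a polynomial), we are in Theorem~\ref{thm: main-classification-common-root}, with the two fixed points $\beta/(1-\alpha)\ne\gamma/(1-\delta)$ when $\alpha,\delta\ne1$.

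In case (1), $c-a=\mu F^p(x-a)$ and $c-b=F^q(x-b)$ with $a\ne b$ and $p,q$ nonzero coprime. Taking degrees and using that $c$ is a polynomial, analysis of poles of $F$ forces $F$ to be a polynomial if $p,q>0$. Subtracting the two identities gives $b-a=\mu F^p(x-a)-F^q(x-b)$, and comparing leading terms and the zeros of $F$ yields a contradiction unless $F$ is constant. Signs with $p$ or $q$ negative are excluded similarly by a pole count at zeros of $F$.

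In case (2), $c=\gamma B(F)+x$ and $c-a=F^d(x-a)$. Then $\gamma B(F)=(F^d-1)(x-a)$, which implies $F^d-1$ divides $B(F)$ as rational functions in $F$, i.e. $x-a$ is a rational function of $F$. Hence $F$ has degree $1$, and we can make $c$ explicit. One then checks directly that the resulting $c$ forces either $c=f^k$ or a relation making $f,g$ not independent. The case $\alpha=1\neq\delta$ is symmetric.

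The main obstacle is expected to be this degree and pole bookkeeping in the linear case, and in particular ruling out the exceptional families using only compositional independence. The mixed case is less novel but technically delicate.
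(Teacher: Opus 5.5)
Your overall decomposition matches the paper's: \cite[Theorem~1]{HT17} when a map has degree $>1$, and Theorem~\ref{thm: main-classification-common-root} with $c_1=c_2=c$ when both maps are affine. However, there are genuine gaps in both halves.

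\textbf{Affine case.} Here you assert contradictions that do not exist, so compositional independence is never used where it is actually needed.

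In case (1) with $pq<0$, pole counting only reduces to $\{p,q\}=\{1,-1\}$ and $F=k(x-b)/(x-a)$, where $a=\beta/(1-\alpha)\neq b=\gamma/(1-\delta)$. Then $c=\mu k(x-b)+a=k^{-1}(x-a)+b$ is consistent with $k=-1$, $\mu=1$, i.e.\ $c(x)=a+b-x$. This family really does have infinitely many $\lambda$: take e.g.\ $\delta=\alpha^{-1}$, $n=m$, and $\lambda_m$ defined by $-(\lambda_m-b)/(\lambda_m-a)=\alpha^m$. It can only be excluded by freeness. Since $\mu=1$, we have $\alpha^m\delta^n=1$ for infinitely many pairs $(m,n)$, so for two distinct pairs the maps $f^{m_i}\circ g^{n_i}$ are translations. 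Translations commute, which gives a nontrivial relation; this is exactly how Corollary~\ref{cor: mul-algebraic-case} finishes.

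Similarly, in case (2) no inspection of $c$ can work. The example $f=2x$, $g=x+1$, $c=x^2$ of \cite{HT17} lies in this case ($F=x$, $d=1$, $B(t)=t^2-t$), $c$ is not an iterate, and $\lambda=2^m$ gives infinitely many solutions. What rules it out is $f\circ g=g^2\circ f$, a relation that depends only on $\alpha$. The missing idea is Proposition~\ref{prop: non-trance-add-mul-nonfree}: if $\alpha\in\overline{\mathbb{Q}}$, write its minimal integer polynomial as $\sum a_ix^i-\sum b_ix^i$ with $a_i,b_i\in\mathbb{N}$. Then $g^{a_0}\circ f\circ g^{a_1}\circ\cdots\circ f\circ g^{a_d}$ and the corresponding word in the $b_i$ coincide. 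The classification already forces $\alpha\in\overline{\mathbb{Q}}$ (transcendental $\alpha$ is handled by Proposition~\ref{prop: add-mul-trance-case}), so case (2) is excluded without looking at $c$. Your deduction $\deg F=1$ is correct but beside the point.

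\textbf{Mixed-degree case.} Your argument contains a false step. From $d^m\hat{h}_f(\lambda)=\hat{h}_f(c(\lambda))\le\deg(c)\,\hat{h}_f(\lambda)+O(1)$ one only gets $\hat{h}_f(\lambda)=O(d^{-m})$, not $\hat{h}_f(\lambda)=0$. Since the degrees of these $\lambda$ grow with $m$, no finiteness follows. The claimed dichotomy ``$\lambda$ is $f$-preperiodic or $m$ is bounded'' is in fact false: for $f=x^2$, $c=x+1$, every $m$ gives non-preperiodic roots of $\lambda^{2^m}=\lambda+1$. So the linear condition $g^n(\lambda)=c(\lambda)$ has to do real work, and your sketch of that step is not an argument. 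The paper does not reprove this case. Its proof invokes \cite[Theorem~1]{HT17} whenever at least one of $f,g$ has degree greater than $1$, and elsewhere it uses \cite[Proposition~9]{HT17} for exactly this mixed situation. You should simply cite these.
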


Moreover, our approach provides a new proof of the result of \cite{NZ25} concerning automorphism cases.

\begin{thm} \label{thm: mainaut}
    Let $f(x)$, $g(x)$ be automorphisms on $\P^1$ defined over $\C$ and $c(x)$ be a rational function defined over $\C$. If the semigroup generated by $f(x)$ and $g(x)$ under compositions is free, then there are only finitely many $\lambda \in \C$ such that 
    \begin{equation} 
        c(\lambda) = f^n(\lambda) = g^n(\lambda)
    \end{equation}
    for some positive integer $n$ unless $f(x)$, $g(x)$ are simultaneously conjugated by an automorphism on $\P^1(\C)$ to one of the following:
    \begin{enumerate}
        \item $\alpha x + \beta$, $x/(\gamma x + \delta)$, with some $\alpha, \delta, \gamma, \beta \in \C^*$ such that one of $\alpha /\delta $ and $\alpha \delta$ is a root of unity;
        \item $\alpha x + \beta$, $\delta x + \gamma$,  with some $\alpha, \delta \in \C^*$ such that $\alpha$ and $\delta$ are not roots of unity, $\gamma$ and $\beta$ are not both $0$, and either $\alpha/\delta$ is a root of unity other than $1$ or one of $\alpha^2/\delta$ and $\alpha/
        \delta^2$ is a root of unity.
    \end{enumerate}
    
\end{thm}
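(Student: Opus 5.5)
We reduce to normal forms and then turn the problem into one about a rational curve meeting the torus subgroup generated by the multipliers. After simultaneous conjugation we may assume $f(x)=\alpha x+\beta$, and either $g(x)=\delta x+\gamma$ (common fixed point $\infty$) or $g(x)=x/(\gamma x+\delta)$ (no common fixed point, with $\beta,\gamma\neq 0$). Parabolic maps, where the multiplier is $1$, and elliptic maps of finite order are handled separately.

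\textbf{Step 1: the finite-order case.} If $f$ or $g$ has finite order, then $f^n$ or $g^n$ takes only finitely many values. Freeness forces $c$ to differ from each such value, and each nontrivial identity $f^n(\lambda)=c(\lambda)$ has finitely many roots. The same argument disposes of the parabolic-parabolic case. Freeness excludes $f,g$ commuting translations, and after conjugation $x\mapsto x+\beta$ and $x\mapsto x/(\gamma x+1)$ give the relation $\lambda+n\beta=c(\lambda)=\lambda/(n\gamma\lambda+1)$. Eliminating $n$ gives a fixed curve in $\lambda$ alone, so either finitely many $\lambda$ arise or $c$ has an explicit form incompatible with freeness.

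\textbf{Step 2: the main case, same $n$.} With $\alpha\neq 1$ write $a=\beta/(1-\alpha)$, so that $f^n(\lambda)=c(\lambda)$ becomes
\[
\alpha^n=C_1(\lambda):=\frac{c(\lambda)-a}{\lambda-a}.
\]
For $g$ define $C_2$ as in Theorem~\ref{thm: main-classification-positive-char}, so that $g^n(\lambda)=c(\lambda)$ becomes $\delta^n=C_2(\lambda)$. Infinitely many $\lambda$ then give infinitely many points $(\alpha^n,\delta^n)$ on the image of $\lambda\mapsto(C_1(\lambda),C_2(\lambda))$; there must be infinitely many distinct such points, since each fixed value is attained by only finitely many $\lambda$. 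This image is an irreducible rational curve $Z\subset\G_m^2$. By Laurent's theorem (Mordell--Lang for tori), $Z$ is a translate of a one-dimensional subtorus $u^pv^{-q}=\mu$, and the points $(\alpha^n,\delta^n)$ satisfy $\alpha^{np}\delta^{-nq}=\mu$ for infinitely many $n$. Hence $\alpha^p/\delta^q$ is a root of unity, and after parametrizing $Z$ we get $C_1=\mu' F^q$ and $C_2=F^p$. This is exactly the same-$n$ specialization of Theorems~\ref{thm: main-classification-common-root} and \ref{thm: main-classification-no-common-root}(1).

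\textbf{Step 3: extracting the exceptional cases.} The key constraint is that $c$ is a single function. Equating the two expressions for $c$, namely $c=a+(\lambda-a)\mu' F^q$ and $c=b+(\lambda-b)F^p$ (or its analogue for the Möbius $g$), gives an identity of rational functions in $\lambda$ and $F$. When $F$ is nonconstant, comparing degrees and the zeros and poles of $F$ (which must be confined to $\{a,b,\infty\}$ and the like) forces $|p|,|q|\le 2$ with small exponents. These small exponents produce precisely the listed conditions: $\alpha/\delta$ or $\alpha\delta$ a root of unity in case (1), and $\alpha/\delta$ or $\alpha^2/\delta$ or $\alpha/\delta^2$ a root of unity in case (2). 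The possibility that $\alpha/\delta$ is exactly $1$ in case (2) is excluded because $\alpha=\delta$ together with a common fixed point makes $f,g$ commute, contradicting freeness. When one map is parabolic and the other is not, the same-$n$ condition yields $n\beta=B(\ldots)$ coupled with $\alpha^n=C_1(\lambda)$, so $\lambda$ is determined by $n$ in two incompatible ways. A height or growth comparison, using the structure from part (2) of Theorems~\ref{thm: main-classification-common-root} and \ref{thm: main-classification-no-common-root} specialized to $m=n$, then gives finiteness.

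I expect Step 3 to be the main obstacle: the functional equation must be solved carefully enough to pin down exactly the listed exceptional multiplier relations and to use freeness to kill the remaining degenerate solutions.
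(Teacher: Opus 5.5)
Your overall route is the paper's. You normalize, rewrite the equations as $(C_1(\lambda),C_2(\lambda))=(\alpha^n,\delta^n)$, apply Mordell--Lang in $\mathbb{G}_m^2$ to write $C_1,C_2$ as $\mu F^{p}$ and $F^{q}$, and then exploit that $c$ is a single function. The gap is Step 3, which is where essentially all of the paper's proof lives: you assert its outcome rather than derive it, and the assertions are wrong.

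\emph{Opposite-sign exponents occur.} In the common-fixed-point case, $c(x)=a+b-x$ gives $C_1=1/F$ and $C_2=F$ with $F=-(x-a)/(x-b)$ nonconstant. The points $(\alpha^n,\delta^n)$ then lie on $uv=1$, which forces $\alpha\delta$ to be a root of unity. That condition is \emph{not} among those listed in (2), so the small exponents do not produce ``precisely the listed conditions.'' This branch must be killed by freeness, as the paper does: if $(\alpha\delta)^k=1$, then $(f\circ g)^k$ and $(g\circ f)^k$ are translations, hence commute, giving a nontrivial relation.

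\emph{Your reason for discarding $\alpha=\delta$ is false.} The maps $f=2x$ and $g=2x+1$ share $\infty$ and have equal multipliers, yet they do not commute and they generate a free semigroup (binary expansions). The correct reason is that $f^n(\lambda)-g^n(\lambda)=\frac{1-\alpha^n}{1-\alpha}(\beta-\gamma)$ is a nonzero constant, so there are no solutions unless $f=g$.

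\emph{The exponent bound needs a real argument.} It does not come from zeros and poles of $F$ lying in $\{a,b,\infty\}$. For $(p,q)=(1,1)$ one gets $F=(b-a)/((\mu-1)x+b-\mu a)$, which has a pole elsewhere. The paper needs a careful valuation-and-degree case analysis, and you would have to supply it. That analysis shows three things: opposite signs force $|p|=|q|=1$; equal signs force $\max(|p|,|q|)\le 2$ when there is a common fixed point; and only $(p,q)=(1,\pm 1)$ survives when $g(x)=x/(\gamma x+\delta)$.

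Step 1 also contains errors, though in places the theorem does not need.

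\emph{Finite order.} A map of finite order is excluded outright by freeness, since $f^k=\mathrm{id}$ gives $f^{k+1}=f$; $c$ plays no role.

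\emph{Two parabolic maps without a common fixed point.} Your claim here is false. The maps $f=x+2$ and $g=x/(2x+1)$ generate a free semigroup (Sanov), yet for $c(x)=-1/x$ both equations reduce to $\lambda^2+2n\lambda+1=0$, giving infinitely many $\lambda$. This is harmless only because the configuration is exceptional case (1) with $\alpha/\delta=1$, so simply drop the claim.

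\emph{One parabolic and one non-parabolic map.} Your growth comparison does work if you take part (2) of the classification as given. Write $\alpha$ for the non-parabolic multiplier. With $m=n$ you get $B(t_n)=n$ and $t_n^d=\alpha^n$, with $B\in\overline{\mathbb{Q}}[x]$ nonconstant. A place $v$ with $|\alpha|_v^{1/d}>1$ exists by the product formula and Kronecker's theorem, and it gives exponential growth on the left against at most linear growth on the right. This is a different route from the paper. For a common fixed point, the paper shows algebraic $\alpha$ contradicts freeness (Proposition \ref{prop: non-trance-add-mul-nonfree}) and handles transcendental $\alpha$ by Proposition \ref{prop: add-mul-trance-case}. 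Without a common fixed point, it uses the Skolem--Mahler--Lech/algebraic-subgroup argument of Proposition \ref{prop: rational-trans-multi-nocommon}. Either way, the degenerate cases $c=f^{n_0}$ or $c=g^{n_0}$ must be dispatched first, as in Remark \ref{rmk: n=m-nondeg-2}.
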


In the original version of \cite{NZ25}, a minor technical error in the proof led to the omission of one exceptional case.  
This oversight is corrected in (\cite{NZ25b}).
\subsection{Connection to the Dynamical Mordell--Lang Conjecture}

For an algebraic dynamical system, the dynamical Mordell--Lang Conjecture captures the phenomenon of \emph{unlikely intersections} between the orbit of a point and a subvariety.  
One may naturally expect a similar phenomenon when considering the orbit of a subvariety intersecting another subvariety of small dimension.  
To this end, Junyi Xie proposed the following question in \cite[Question~9.13(i)]{Xie23}.

\begin{question}\label{qu: DML-varieties}
Let \( X \) be a variety and \( f \) an endomorphism of \( X \).  
Let \( Z \) and \( V \) be irreducible closed subvarieties of \( X \) such that \( \dim(Z) + \dim(V) < \dim(X) \).  
Can we describe the set
\[
\{n \in \mathbb{N} \mid f^n(Z) \cap V \neq \emptyset\}?
\]
If the characteristic of the base field is \( 0 \), is this set a finite union of arithmetic progressions?
\end{question}

Here and throughout, we interpret \( f^n(Z) \) as the set-theoretic image of \( Z \) under the \( n \)-fold iterate of \( f \).

Somewhat surprisingly, even in characteristic \( 0 \), this set can exhibit highly nontrivial behavior.  
In a private communication with Junyi Xie, Jungin Lee and Gyeonghyeon Nam provided the following striking example.

 \begin{example}(Lee--Nam)
 Let $X=\mathbb{A}_{\mathbb{C}}^5$. Let $Z=\{(x_1,x_2,x_3,x_4,x_5)|\ x_1=0,x_2=x_4=1\}$ and $V=\{(x_1,x_2,x_3,x_4,x_5)|\ x_2=x_4=0,x_1=(x_3+1)(x_5+1)\}$. Let $f$ be the endomorphism of $X$ given by $f(x_1,x_2,x_3,x_4,x_5)=(x_1+1,x_2(x_3-x_1-1),x_3,x_4(x_5-x_1-1),x_5)$. Then $\{n\in\mathbb{N}|\ f^n(Z)\cap V\neq\emptyset\}$ is the set of composite numbers.
 \end{example}

As this example shows, even in characteristic zero, a complete description of such sets can be elusive.  
Nevertheless, it remains a natural and compelling problem to investigate special situations in which the structure of this set can be explicitly determined.\\

The problem discussed earlier in this paper can, in fact, be viewed as a special case of this general question.

Let \( X \) be an irreducible curve defined over \( \mathbb{C} \), and let \( f, g, c : X \to X \) be endomorphisms.  
Define
\[
\Delta \coloneqq \{ (x,x,x) : x \in X \} \subseteq X^3, \qquad
C \coloneqq \{ (x,x,c(x)) : x \in X \} \subseteq X^3.
\]
Let \( F \coloneqq (f, g, \operatorname{Id}) : X^3 \to X^3 \) be the endomorphism acting componentwise.

Then the set of \( n \in \mathbb{N} \) for which there exists \( x \in X \) satisfying
\[
f^n(x) = g^n(x) = c(x)
\]
is precisely
\[
\{ n \in \mathbb{N} \mid F^n(C) \cap \Delta \neq \emptyset \}.
\]
Thus, describing this set of integers \( n \) can be seen as a special case of Question~\ref{qu: DML-varieties}.

In the second half of this paper, we show that when \( X = \mathbb{P}^1 \) and \( f, g \) are automorphisms of \( \mathbb{P}^1 \), this set is a finite union of arithmetic progressions.  
More generally, we prove the same conclusion for endomorphisms arising from algebraic group actions on projective varieties and also describe the structure of this set when the maps are defined over a field of positive characteristic:

\begin{thm} \label{thm: main-group-GDML}
Let $f$ be an automorphism of a projective variety $X$. Suppose that $f$ comes from an algebraic group action. Let $Z$ and $V$ be closed subvarieties of $X$. Consider the set $\{n\in\mathbb{Z}|\ f^n(Z)\cap V\neq\emptyset\}$.
\begin{enumerate}
\item
If $\mathrm{char}(K)=0$, then this set is a finite union of arithmetic progressions.
\item
If $\mathrm{char}(K)=p>0$, then this set is a widely $p$-normal set.
\end{enumerate}
\end{thm}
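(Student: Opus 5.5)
The idea is to move the problem from $X$ into the acting group, where the condition becomes an intersection of a constructible subset of the group with the cyclic subgroup generated by one element.

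Let $G$ be the algebraic group acting on $X$ via $\sigma\colon G\times X\to X$, with $f=\sigma(g_0,\cdot)$ for some $g_0\in G(K)$. Then $f^n=\sigma(g_0^n,\cdot)$ for all $n\in\Z$. Consider the set
\[
W\coloneqq\{h\in G \mid \sigma(h,Z)\cap V\neq\emptyset\}.
\]
It is the image under the first projection $G\times X\to G$ of the closed set
\[
\{(h,z)\in G\times Z \mid \sigma(h,z)\in V\}.
\]
Since $X$ is projective, $Z$ is proper, so the projection $G\times Z\to G$ is closed. Hence $W$ is a Zariski closed subset of $G$; projectivity is exactly what is used here. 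By construction,
\[
\{n\in\Z \mid f^n(Z)\cap V\neq\emptyset\}=\{n\in\Z \mid g_0^n\in W\}.
\]
So the problem reduces to describing the return set of the orbit of the identity under left translation by $g_0$ to a closed subset $W\subseteq G$.

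In characteristic $0$, I would let $H$ be the Zariski closure of $\{g_0^n : n\in\Z\}$. It is a commutative algebraic group whose identity component $H^0$ has finite index $N$, and each coset $g_0^iH^0$ contains the points $g_0^{i+Nk}$, which are Zariski dense in that coset. The argument then proceeds in two steps.
\begin{enumerate}
\item Apply the dynamical Mordell--Lang theorem for translations on commutative algebraic groups, which is a consequence of the Mordell--Lang conjecture of Faltings, Vojta and McQuillan for finitely generated subgroups, to $W\cap H$ and the cyclic group $\langle g_0\rangle$.
\item Conclude that $\{n : g_0^n\in W\cap H\}$ is a finite union of arithmetic progressions; singletons are allowed, as progressions with difference $0$.
\end{enumerate}
Alternatively, the translation $h\mapsto g_0h$ is an automorphism of the variety $H$, so the automorphism case of dynamical Mordell--Lang (Bell--Ghioca--Tucker via the $p$-adic method) applies directly.

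In characteristic $p$, the same reduction gives $\{n : g_0^n\in W\cap H\}$ with $H$ commutative. I would then invoke Moosa--Scanlon's structure theorem for $F$-sets, or the Ghioca / Corvaja--Ghioca--Scanlon--Zannier results on the positive-characteristic Mordell--Lang problem for subvarieties of commutative algebraic groups intersected with a finitely generated group. Those theorems describe such intersections as unions of sums of groups and $q$-power orbits; pulling back to exponents gives exactly the widely $p$-normal sets, of the form $\sum_j c_j p^{k_j n_j}$ plus progressions. To handle a general commutative group, including the unipotent and abelian-variety parts, I would split $H$ using the structure theory of commutative algebraic groups. If the available results only cover semiabelian varieties, I would treat $\mathbb{G}_a$-type unipotent factors separately, where $g_0$ has finite order $p^k$ and the statement is trivial.

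The main obstacle is this positive-characteristic step. One must match the output of the Moosa--Scanlon $F$-structure theorem precisely with the definition of a widely $p$-normal set used in this paper, and check that it applies to the possibly non-semiabelian group $H$. The characteristic-$0$ case should be essentially formal once the closedness of $W$ is established.
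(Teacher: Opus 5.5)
Your reduction is the paper's. Writing $f=\sigma(g_0,\cdot)$, the condition $f^n(Z)\cap V\neq\emptyset$ becomes $g_0^n\in W$, where $W$ is the projection to $G$ of $\{(h,z)\in G\times Z : \sigma(h,z)\in V\}$; it is closed because $Z$ is proper. Your characteristic-$0$ step also matches: the paper cites \cite[Theorem 7]{CS93} or \cite{BGT10}. Of your two justifications, though, only the Bell--Ghioca--Tucker one works in general. Faltings--Vojta--McQuillan concerns semiabelian varieties. Once $H$ has a unipotent part, the Mordell--Lang statement for finitely generated groups is false: the parabola $y=x^2$ in $\mathbb{G}_a^2$ meets $\mathbb{Z}^2$ in $\{(n,n^2)\}$, which is not a finite union of cosets. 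So even the cyclic case there needs a Skolem--Mahler--Lech or $p$-adic argument.

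The genuine gap is part (2). The statement needed is exactly this: for an algebraic group $G$ in characteristic $p$, a point $g\in G(K)$ and a closed $V_0\subseteq G$, the set $\{n\in\mathbb{Z} : g^n\in V_0\}$ is widely $p$-normal. That is \cite[Theorem 3.1]{XY25}, and the paper's proof of (2) consists of that citation; the notion in \cite[Definition 1.1]{XY25} was designed for these return sets. Your outline does not establish it, for three reasons.

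\textbf{(i) The unipotent part need not be a direct factor of $H^0$.} Non-split extensions of an abelian variety by $\mathbb{G}_a$ exist in characteristic $p$, so there is no well-defined ``unipotent component'' of $g_0$ whose finite order you can read off. The correct reduction uses that $[p^k]$ kills the unipotent part for $k\gg 0$. Then $[p^k]H^0$ is semiabelian and contains $g_0^{Np^k}$. You must treat each residue class $r$ of $n$ modulo $Np^k$ separately, using the closed set $g_0^{-r}W\cap[p^k]H^0$.

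\textbf{(ii) Moosa--Scanlon does not apply directly.} Their $F$-structure theorem is for semiabelian varieties over finite fields. For a general semiabelian variety you first need Hrushovski's theorem to isolate the isotrivial pieces. You then still have to convert $F$-sets meeting $\langle g_0\rangle$ into sets of exponents, as the paper does by hand for curves in $\mathbb{G}_m^2$ via \cite[Theorem B]{MS04}.

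\textbf{(iii) The final form is guessed, not checked.} After reassembling the residue classes, you must verify that the result is widely $p$-normal in the precise sense of \cite[Definition 1.1]{XY25}, rather than assuming its shape.

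None of this is carried out, and you yourself flag it as the main obstacle. As written, part (2) is a plan rather than a proof, and the missing ingredient is \cite[Theorem 3.1]{XY25}.
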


Moreover, beyond the context of endomorphisms induced by algebraic group actions, we establish the same conclusion in the case when \( X = \mathbb{A}^1_{\mathbb{C}} \) and \( f, g, c \) are polynomials defined over \( \mathbb{C} \):
\begin{thm}\label{thm: main-GDML-polynomial}
    Let $f$ and $g$ be non-constant polynomial defined over $\C$ and $c$ be a polynomial defined over $\C$. Then the set of $n \in \N$ such that 
    \begin{equation}
        f^n(x) = g^n(x) = c(x)
    \end{equation} 
    admits solution is a finite union of arithmetic progressions.
\end{thm}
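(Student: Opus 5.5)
Our plan is a case split on the degrees of $f$ and $g$, reducing each case either to a finiteness statement (which yields a finite set of $n$) or to the algebraic-group version, Theorem~\ref{thm: main-group-GDML}. Let $S=\{n\in\N : \exists x,\ f^n(x)=g^n(x)=c(x)\}$. A finite set is a finite union of (degenerate) arithmetic progressions. Also, if $S$ contains a full arithmetic progression $a+b\N$ for some $b\ge 1$, we may pass to the iterates $f^b,g^b$ and the shifted targets. So it suffices to show that $S$ is either finite or eventually periodic.

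\textbf{Case 1: $\deg f=\deg g=1$.} Then $f,g$ are affine automorphisms of $\P^1$. Apply Theorem~\ref{thm: main-group-GDML}(1) with $X=(\P^1)^3$, the automorphism $F=(f,g,\id)$ coming from the action of $\mathrm{PGL}_2^3$, $Z=C$ (the closure of the graph curve $\{(x,x,c(x))\}$) and $V=\Delta$. This shows $\{n\in\Z : F^n(C)\cap\Delta\neq\emptyset\}$ is a finite union of arithmetic progressions. We must still discard intersections at the point at infinity. We intersect with $\N$. Solutions at $x=\infty$ hold for all $n$ or for none, since $f,g$ fix $\infty$ and $c$ is a polynomial: if $\deg c\le 1$, then either $c(\infty)=\infty$ always, or never. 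So we modify $V$ to $\Delta\cap\A^3$. This is not closed, but the difference is a set of $n$ that is either all of $\N$ or empty, so the condition "solution in $\A^1$" is a Boolean combination that can be handled. More carefully: the set of $n$ for which the only intersection is at $\infty$ is a constructible condition. We would rather treat $V=\Delta$ and $Z=C$ inside $(\P^1)^3$ and separately show that $\infty$ is isolated. This is the fiddly point, and we may instead directly invoke the explicit classification of Section~\ref{sec: common-auto-case}.

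\textbf{Case 2: at least one of $f,g$ has degree $\ge 2$,} say $\deg f\ge 2$. We split further.

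(a) Suppose $f$ and $g$ are compositionally independent and $c$ is not an iterate of $f$ or $g$. Then Theorem~\ref{thm: f^m-g^n-c-poly} gives finitely many $\lambda$ with $f^m(\lambda)=g^n(\lambda)=c(\lambda)$. For each such $\lambda$, the set of $n$ with $f^n(\lambda)=c(\lambda)=g^n(\lambda)$ is finite or an arithmetic progression, since this is the orbit of a point under a map of $\A^1$ (the $n$ with $f^n(\lambda)$ equal to a given value form a finite set or a progression). So $S$ is a finite union of progressions.

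(b) Suppose $c=f^k$ for some $k$. Then the condition becomes $f^n(x)=f^k(x)=g^n(x)$. For $n\neq k$, $f^n(x)=f^k(x)$ forces $x$ to be preperiodic. We must find when $g^n(x)=f^k(x)$ has a preperiodic solution. The case $g^n=f^k$ identically is handled by degree comparison. Otherwise, we use Ritt-type results (Ghioca–Tucker–Zieve) on polynomials with common preperiodic points or shared orbits. We expect that either the solutions lie in a finite set of $x$ (and then we argue as in (a)), or $f$ and $g$ share an iterate, in which case the structure is periodic.

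(c) Suppose $f,g$ are compositionally dependent. By the structure theory of polynomial semigroups (Ritt/Zieve), $f$ and $g$ have a common iterate, up to linear twists. We then reduce to $f^a=g^b$ and obtain periodicity.

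Case 2(c), and the preperiodic subcase (b), are the main obstacles. There we rely on the classification of compositionally dependent polynomials, and we have to check that $S$ restricted to each residue class is finite or cofinite in that class.
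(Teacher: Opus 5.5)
\textbf{There is a genuine gap: Cases 2(b) and 2(c) are not proved, and they are where the theorem has content.}

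Case 1 and Case 2(a) are fine in outline, with one correction. In Case 1 the projective statement is vacuous: for non-constant $c$, the point $(\infty,\infty,\infty)$ lies in $F^n(C)\cap\Delta$ for every $n$. You must instead work on $\A^3$ with the affine group and use the constructible version, Remark~\ref{rmk: non-projective-GDML}, which is what the paper does.

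The real problem is that Case 2(a) only covers configurations with finitely many $\lambda$. Every configuration where the statement has content therefore falls into (b) or (c), and there you give no argument. In (b), ``we expect'' is not a proof. The structural claim in (c) is false. For example, $x^2$ and $x^3$, or $T_2$ and $T_3$, commute and so are compositionally dependent. Yet they have no common iterate even up to linear twists, since $2^a\neq 3^b$. These twisted power and Chebyshev pairs are exactly the hard case. Even when $f^m=g^m$ does hold, passing to the common iterate says nothing about the residue classes $n\not\equiv 0\pmod m$. The paper handles those by writing $f^{nm+l}=\sigma_l\circ g^{nm+l}$ with $\sigma_l\in\Aut(J)\setminus\{\id\}$ and using the unique fixed point of $\sigma_l$ (Proposition~\ref{prop: degree-eq-n-case}).

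What is missing is the actual mechanism, which the paper supplies in four steps.
\begin{enumerate}
\item Infinitely many solutions force $J(f)=J(g)$ and $\Prep(f)=\Prep(g)$, by equidistribution.
\item Schmidt--Steinmetz then gives $f=\sigma_1\circ h^{k_1}$ and $g=\sigma_2\circ h^{k_2}$ with $\sigma_i\in\Aut(J)$. Dynamical Manin--Mumford for $(h,h)$ (Ghioca--Nguyen--Ye) gives $c=\sigma_3\circ h^{k_3}$, or $c$ a preperiodic constant (Proposition~\ref{prop: poly-infinite-sol-prep}).
\item In the non-special case there is a direct argument. Either $0$ is a solution for every $n$, or $\sigma_1=\sigma_2$ and a fixed point of $h$ gives a solution for all large $n$, or all solutions lie in $c^{-1}(0)$.
\item In the power and Chebyshev cases, lifting through $t\mapsto (t+t^{-1})/2$ turns solvability for a given $n$ into a divisibility condition of the form $k\cdot\gcd(d_1^n-d_3,d_2^n-d_4)\mid b(d_1^n-d_3)-a(d_2^n-d_4)$ (Lemma~\ref{lem: power-reduce-to-gcd}). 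Eventual periodicity of that set of $n$ is Proposition~\ref{prop: gcd-arithmetic-progressions}, whose proof bounds $v_{\mathfrak p}$ of the gcd using $\mathfrak p$-adic logarithms.
\end{enumerate}
No Ritt/Zieve-type decomposition result or orbit-intersection theorem supplies this arithmetic step.
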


This case are tractable because the corresponding dynamical behaviors are well understood. Our proof relies crucially on the classification theorem of \cite{SS95}, which characterizes polynomials sharing the same set of preperiodic points. Also, it turns out that the power maps case connects tightly to a subtle elementary number theory problem concerning greatest common divisors which we discussed and resolved in the preliminary subsection of Section \ref{sec: GDML-poly}. We put here the statement to highlight it in case it is of independent interests for some reader: 

\begin{prop}\label{prop: gcd-arithmetic-progressions}
Let \( K \) be a number field with ring of integers \( \mathcal{O} \).  
Let $\k $ be an ideal in $\O$, and let \( d_1, d_2 \in \mathcal{O} \).  
Let \( d_3, a, b \in \mathcal{O} \).  
Then the set
\[
\left\{\, n \in \mathbb{N} :\; b(d_1^{n} - d_3) - a(d_2^{n} - d_3) \in 
\k\, (d_1^{n} - d_3,\; d_2^{n} - d_3) \,\right\}
\]
is a finite union of arithmetic progressions.
\end{prop}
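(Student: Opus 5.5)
The plan is to localize. Since $\O$ is a Dedekind domain, membership in an ideal can be checked one prime at a time. Write $x_n=d_1^n-d_3$, $y_n=d_2^n-d_3$, $w_n=bx_n-ay_n$ and $I_n=(x_n,y_n)$.

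If $x_n=y_n=0$, then $w_n=0$ and the condition holds trivially. Otherwise the condition says that for every nonzero prime $\p$,
\[
v_\p(w_n)\ \ge\ v_\p(\k)+\min\bigl(v_\p(x_n),v_\p(y_n)\bigr).
\]
For $\p\nmid\k$ this holds automatically, because $w_n\in I_n$. So only the finitely many primes $\p\mid\k$ matter. The set in question is the finite intersection over $\p\mid\k$ of sets $S_\p$, intersected with the complement of nothing new. A finite intersection of finite unions of arithmetic progressions (finite sets counted as degenerate progressions) is again of this form. Hence it suffices to fix $\p\mid\k$, put $e=v_\p(\k)$, and show that $S_\p$ is a finite union of arithmetic progressions.

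Fix $\p$ and work in the completion $\O_\p$, whose residue characteristic is $p$. If $d_i$ is a $\p$-adic unit, choose $M$ (a multiple of $N(\p)-1$, times a power of $p$) such that $d_i^M\equiv 1$ modulo a high power of $\p$. On each residue class $n=r+Mt$ the map $t\mapsto d_i^{r+Mt}=d_i^r\exp(t\log d_i^M)$ extends to a $\p$-adic analytic function on $\Z_p$. Consequently $x_n,y_n,w_n$ become analytic functions $X(t),Y(t),W(t)$ on $\Z_p$. Each of these is either identically zero or has only finitely many zeros in $\Z_p$.

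Away from these zeros, $v_\p$ of each function is locally constant, so by compactness the condition is constant on residue classes of $t$ modulo some $p^k$. Near a zero $t_0\in\Z_p$, Weierstrass preparation gives $v_\p(X(t))=x_0+m_X\,v(t-t_0)$ for $t$ close to $t_0$, and similarly for $Y$ and $W$. With $s=v(t-t_0)$, the condition becomes a comparison of finitely many affine functions of $s$. Therefore:
\begin{itemize}
\item it either holds for all large $s$ or fails for all large $s$;
\item each of the finitely many intermediate values of $s$ gives a shell $\{v(t-t_0)=s\}$, which is a finite union of residue classes.
\end{itemize}
A ball around $t_0$ meets $\N$ in an arithmetic progression, and adding or removing the single point $t_0$ (if it lies in $\N$) changes the set only by a finite set. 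Putting the residue classes together gives $S_\p$ as a finite union of arithmetic progressions.

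The main obstacle is the degenerate case where some $d_i\in\p$. Then $d_i^n\to0$, so $x_n\to-d_3$ and the analytic parametrization in $n$ breaks down; the case $d_i=0$ is trivial. The plan is to split further into cases.
\begin{itemize}
\item If $d_3\ne0$, then for large $n$ we have $v_\p(x_n)=v_\p(d_3)$, and $x_n$ equals $-d_3$ plus an error whose valuation grows linearly in $n$.
\item If $d_3=0$, then $v_\p(x_n)=n\,v_\p(d_1)$ exactly.
\end{itemize}
In either case I would rerun the comparison of valuations. One side is now affine in $n$ or eventually constant, while the other side is analytic or also degenerate. The inequality is again decided by comparing finitely many eventually-linear expressions, together with the analytic local analysis above when one of $d_1,d_2$ is a unit. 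The resulting set is eventually periodic, up to finitely many exceptions. Checking that every subcase of this mixed situation really yields eventual periodicity is where I expect the bookkeeping to be most delicate.
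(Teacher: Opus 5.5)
Your reduction to a single prime $\p\mid\k$ is the same as the paper's. From there your route is genuinely different.

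\textbf{The unit case.} For $d_1,d_2\in\O_\p^\times$, your argument is correct: interpolate $n\mapsto d_i^n$ on residue classes by $t\mapsto d_i^{r}\exp(t\log d_i^{M})$, apply Strassmann's theorem, and use the local form $v_\p(F(t))=c_F+m_F\,v_\p(t-t_0)$ near each zero. This single argument covers everything the paper treats separately: $d_i$ a root of unity, $d_1^m=d_2^m$, and $d_3$ zero, a root of unity, or neither.

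The paper works differently. In the generic case it proves, via the $\p$-adic logarithm, that $v_\p\bigl((d_1^n-d_3,d_2^n-d_3)\bigr)$ is bounded in $n$, so the condition becomes finitely many congruences modulo a fixed power of $\p$. When $d_3$ is a root of unity, it uses the explicit formula $v_\p(\omega_i^n-1)=v_\p(n)+\mathrm{const}$ on multiples of $m_i$, together with a binomial expansion truncated to a polynomial in $n$. In your picture, the gcd valuation is unbounded exactly near a common zero $t_0\in\Z_p$ of $X$ and $Y$, and your affine local form is the general version of the paper's formula. The paper's route is more elementary. Yours is uniform and adapts directly to other linear recurrences whose roots are $\p$-units.

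\textbf{The gap: the case $d_1\in\p$ or $d_2\in\p$.} You only sketch this case, and your sketch does not quite cover it. It is in fact the easy part, and the paper dispatches it quickly.

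If $d_3\neq0$ and, say, $d_1\in\p$, then $v_\p(x_n)=v_\p(d_3)$ for large $n$. So the right-hand side $v_\p(\k)+\min(v_\p(x_n),v_\p(y_n))$ is bounded by $N=v_\p(\k)+v_\p(d_3)$. For large $n$ the condition therefore depends only on the residues of $d_1^n$ and $d_2^n$ modulo $\p^{N}$, and these are eventually periodic in $n$. This boundedness is what neutralizes the non-analytic term $bd_1^n$ in $w_n$; you do not need to compare growing valuations at all.

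If $d_3=0$ and exactly one $d_i$ lies in $\p$, the minimum is $0$ and the condition is eventually constant.

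If $d_3=0$, both $d_i\in\p$, and $v_\p(d_1)=v_\p(d_2)$, your proposed comparison of eventually-linear expressions fails. The expression $bd_1^n-ad_2^n$ can cancel, and neither $d_i$ is a unit, so your analytic step does not apply either. Instead, factor out $d_2^n$. The condition becomes $bu^n\equiv a\pmod{\p^{v_\p(\k)}}$ with $u=d_1/d_2\in\O_\p^\times$, which is periodic in $n$; this is the paper's Case III. With unequal valuations your linear comparison does work.

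With these few lines added, your proof is complete.
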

\begin{rmk}
   We expect that this result can be further generalized to hold over any Noetherian integral domain of characteristic \(0\) and Krull dimension one.
\end{rmk}

\begin{rmk}
    When $K = \Q$, Proposition \ref{prop: gcd-arithmetic-progressions} implies that the set of $n\in \N$ such that 
    $$ f^n (x) = g^n(x) = c(x)$$
    admits a solution in $\C^*$ is a finite union of arithmetic progressions, where $f,g$ and $c$ are endomorphisms on $\C^*$. See the discussion in Section \ref{sec: GDML-poly} and Lemma \ref{lem: power-reduce-to-gcd}.
    \end{rmk}

\subsection{Outline of the paper}

In Section~\ref{sec: common-auto-case}, we develop a new approach to the automorphism case of 
Question~\ref{qu: HTquestion} and obtain a series of classification results addressing the more general 
Question~\ref{qu: automorphisms-common-zeros}. As an application of this classification, we prove 
Theorem~\ref{thm: f-g-m-n-linear-finite}, which strengthens \cite[Theorem~1]{HT17}. We also provide 
an alternative proof of \cite[Theorem~1.3]{NZ25} using our new method. In addition, when the base 
field has positive characteristic, our classification yields a counterexample to the first part 
of \cite[Question~17]{HT17}.

In Section~\ref{sec: connect-GDML-group}, we relate our dynamical GCD problem to a special case of 
the generalized Dynamical Mordell--Lang question (Question~\ref{qu: DML-varieties}). We resolve 
this question in the setting where the dynamics arise from algebraic group actions.

In Section~\ref{sec: GDML-poly}, we study the special case of Question~\ref{qu: DML-varieties} that 
is directly connected to our dynamical GCD framework. In this setting, the problem is equivalent 
to describing the set of integers \( n \in \mathbb{N} \) for which the equations
\[
f^n(x) = g^n(x) = c(x)
\]
admit common solutions in \( \mathbb{A}^1_{\mathbb{C}} \). We completely resolve this problem when 
\( f \), \( g \), and \( c \) are polynomials.

\section{Common Zeros of Iterated M\"obius Transformations }\label{sec: common-auto-case}
In this section, we first answer Question~\ref{qu: automorphisms-common-zeros} using our new approach, dividing the discussion into several propositions that treat different cases. We then focus on the system \( \{f, g, c\} \), where \( f \) and \( g \) are M\"obius transformations on \( \mathbb{P}^1 \) and \( c \) is a rational function. In this setting, we prove Theorem~\ref{thm: f^m-g^n-c-poly} and provide an alternative proof of Theorem~\ref{thm: mainaut}. Finally, we consider the case where the base field has positive characteristic and address Question~\ref{qu: positive-char}.

\subsection{On Question \ref{qu: automorphisms-common-zeros}}
In this section, we give a complete characterization of when there exist infinitely many points \( \lambda \in \mathbb{C} \) satisfying  
\[
f^n(\lambda) = c_1(\lambda) \quad \text{and} \quad g^m(\lambda) = c_2(\lambda)
\]
for some \( m, n \in \mathbb{N}^+ \), where \( f \) and \( g \) are M\"obius transformations except the case that $f$ and $g$ are both translation maps (see the discussion at the end of the subsection).  
Throughout this subsection, we assume that the pairs of rational functions \( \{f, c_1\} \) and \( \{g, c_2\} \) are \textbf{non-degenerate}, meaning that \( c_1 \) (respectively \( c_2 \)) is not equal to \( f^n \) (respectively \( g^n \)) for any $n \in \N^+$. The degenerate cases are elementary but somewhat tedious to verify, and we leave the details to the interested reader.

Note that \( f \) and \( g \) can be simultaneously conjugated to one of the following two forms:
\begin{itemize}
    \item \( f(x) = \alpha x + \beta \) and \( g(x) = \delta x + \gamma \), where \( \alpha, \delta \in \mathbb{C}^* \) and \( \beta, \gamma \in \mathbb{C} \);
    \item \( f(x) = \alpha x + \beta \) and \( g(x) = x/(\gamma x + \delta) \), where \( \alpha, \delta \in \mathbb{C}^* \) and \( \beta, \gamma \in \mathbb{C} \).
\end{itemize}
In what follows, we will discuss these two cases separately through a series of propositions.

\begin{prop}\label{prop: poly-m-n-not-root-case}
    Suppose $f(x) = \alpha x + \beta$ and $g(x) = \delta x + \gamma$, where $\alpha, \delta \in\C^*$ are not $1$. Let $c_1(x)$ and $c_2(x)$ be two rational functions. Then there are infinitely many $\lambda \in \C$ such that $$f^m(\lambda) = c_1(\lambda)$$
    $$ g^n(\lambda) = c_2(\lambda)$$
    holds for some $m,n \in \N^+$ implies the following holds 
    
         $$ c_1(x) = \mu F(x)^p\left (x - \frac{\beta}{1-\alpha}\right) + \frac{\beta}{1 - \alpha}$$
    $$c_2(x) = F(x)^q\left(x - \frac{\gamma}{1 - \delta}\right) + \frac{\gamma}{1 - \delta},$$
    where $F(x)$ is a non-constant rational function, $p,q$ are non-zero coprime integers and $\mu \in \C^*$ such that $\mu^q= \alpha^{mq}/\delta^{np}$ for infinitely many pairs of $(m,n) \in (\N^+)^2$.

\end{prop}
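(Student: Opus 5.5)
Conjugating each map to its fixed point turns the problem into a question about points $(\alpha^m,\delta^n)$ on a rational curve. Write $a=\beta/(1-\alpha)$ and $b=\gamma/(1-\delta)$ for the finite fixed points of $f$ and $g$. Then $f^m(x)-a=\alpha^m(x-a)$ and $g^n(x)-b=\delta^n(x-b)$. Set
\[
C_1(x)=\frac{c_1(x)-a}{x-a},\qquad C_2(x)=\frac{c_2(x)-b}{x-b}.
\]
If $\lambda\notin\{a,b\}$ satisfies $f^m(\lambda)=c_1(\lambda)$ and $g^n(\lambda)=c_2(\lambda)$, then $C_1(\lambda)=\alpha^m$ and $C_2(\lambda)=\delta^n$. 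Discarding finitely many $\lambda$, we get infinitely many $\lambda$ with $(C_1(\lambda),C_2(\lambda))\in\Gamma$, where $\Gamma=\{(\alpha^m,\delta^n)\}\subset\G_m^2$ lies in a finitely generated subgroup.

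First rule out degenerate cases. If $C_1$ is constant, say $C_1\equiv\kappa$, then $\alpha^m=\kappa$ holds for all the relevant $m$. If $\alpha$ is not a root of unity, this pins down a single $m$, and then $c_1=f^m$. We must then check this is excluded: the paper's non-degeneracy convention covers it, or else the formula holds only trivially. If $\alpha$ is a root of unity, one can instead absorb $\alpha^m$ into $\mu$. In the generic case both $C_1$ and $C_2$ are nonconstant. Consider the image curve $\mathcal{C}$ of $\lambda\mapsto(C_1(\lambda),C_2(\lambda))$ in $\G_m^2$; its Zariski closure is an irreducible rational curve. Infinitely many distinct $\lambda$ give points of $\Gamma$. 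Since each fiber of $\lambda\mapsto(C_1,C_2)$ is finite, $\mathcal{C}\cap\Gamma$ is infinite.

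Next apply the Mordell–Lang theorem for $\G_m^2$ (Laurent's theorem). This says $\mathcal{C}$ is a translate of a one-dimensional subtorus. Such a subtorus has the form $\{(u^q,u^{p'})\}$ for coprime integers, after fixing orientation. Hence there exist coprime integers $p,q$, not both zero, and a point $(\mu,1)$, or more generally $(\mu_1,\mu_2)$, such that $\mathcal{C}=\{(\mu t^p,t^q)\}$. Since $C_1$ and $C_2$ are nonconstant, both $p$ and $q$ are nonzero. Normalizing the translate so the second coordinate carries no scalar, we may write $C_1=\mu F^p$ and $C_2=F^q$, where $F=t\circ(C_1,C_2)$. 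Here we need the parametrization of the coset to pull back to a rational function of $\lambda$. This holds because $\gcd(p,q)=1$ allows us to recover $t$ as $C_1^{r}C_2^{s}\mu^{-r}$ with $rp+sq=1$. So $F$ is rational and nonconstant. Unwinding the definitions of $C_1$ and $C_2$ gives the stated formulas for $c_1$ and $c_2$.

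Finally, derive the condition on $\mu$. Points of $\Gamma$ on the coset satisfy $\alpha^m=\mu t^p$ and $\delta^n=t^q$. Eliminating $t$ gives $\alpha^{mq}=\mu^q t^{pq}=\mu^q\delta^{np}$, so $\mu^q=\alpha^{mq}/\delta^{np}$. This must hold for infinitely many pairs $(m,n)$, because infinitely many points of $\Gamma$ lie on the curve. Distinct points correspond to distinct pairs, and each point has finitely many $\lambda$ preimages. The main obstacle is the Mordell–Lang step. One must check that the rank of $\Gamma$ and possible torsion (if $\alpha$ or $\delta$ has finite order) do not spoil the conclusion. Laurent's theorem handles arbitrary finitely generated groups, so the curve is a torsion-coset translate. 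The one delicate point is choosing the translate so that the second coordinate has scalar $1$; this is done by absorbing the scalar into $F$ via a $q$-th root.
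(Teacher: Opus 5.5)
Your proposal is correct and follows essentially the same route as the paper: you normalize at the fixed points, view $x\mapsto(C_1(x),C_2(x))$ as parametrizing a curve in $\G_m^2$ that meets $\{(\alpha^m,\delta^n)\}$ infinitely often, apply Mordell--Lang (Laurent) to get a coset of a one-dimensional subtorus, and pull back its parametrization; your B\'ezout inversion $F=(C_1/\mu)^rC_2^s$ makes explicit the ``equivalently'' step that the paper leaves implicit. The one inaccuracy is your degenerate-case aside: if $C_1$ is constant then $c_1=f^m$ and the conclusion genuinely fails (it does not hold ``trivially'', and $\alpha^m$ cannot be absorbed into $\mu$ because $\mu F^p$ is nonconstant), so that case must simply be excluded by the paper's standing non-degeneracy hypothesis, which is exactly how the paper handles it.
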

\begin{proof}
    We first note that if $\alpha$ is a root of unity other than $1$, then having infinitely many $\lambda \in \C$ such that $f^m(\lambda) = c_1(\lambda)$ for some $m \in \N^+$ will imply that $c_1 = f^m$ for some $m \in \N^+$ as $\{f^m : m \in \N^+\}$ is a finite set. This contradicts that $\{f,c_1\}$ is non-degenerate. So, we can assume that $\alpha$ is not a root of unity. For the same reason, we can also assume $\delta$ is not a root of unity.     
    
    Note that having infinitely many $\lambda \in \C$ such that $f^m(\lambda) = c_1(\lambda)$ and $g^n(\lambda) = c_2(\lambda)$ hold for some $m,n \in \N^+$ is equivalent to having infinitely many $\lambda$ such that
    \begin{equation}\label{eq-poly-m-n-1}
        \alpha^m \lambda + \frac{1 - \alpha^m}{1 - \alpha} \beta = c_1(\lambda)
    \end{equation}
    \begin{equation}\label{eq-poly-m-n-2}
        \delta^n \lambda + \frac{1 - \delta^n}{1 - \delta}\gamma = c_2(\lambda)
    \end{equation}
    hold for some $m,n \in \N^+$. Since we assumed that $\{f,c_1\}$ and $\{g,c_2\}$ are non-degenerate, we have these imply that
    the set of $(m,n) \in (\N^+)^2$ such that \begin{equation}
        \alpha^m = \frac{c_1(\lambda) - \beta/(1 - \alpha)}{\lambda - \beta/(1 - \alpha)}
    \end{equation}
    \begin{equation}
        \delta^n = \frac{c_2(\lambda) - \gamma/(1 - \delta)}{\lambda - \gamma/(1 - \delta)}
    \end{equation}
hold for some $\lambda \in \C$ are infinite and, moreover, its projection to both coordinates are infinite sets. Let $S \subseteq (\N^+)^2 $ denote this set of $(m,n)$. These are equivalent to say that the rational map $\A^1 \to \G^2_m$ given by
    \begin{equation}\label{eq: para-m-n-1}
        x \to \left(\frac{c_1(x) - \beta/(1 - \alpha)}{x - \beta/(1 - \alpha)} , \frac{c_2(x) - \gamma/(1 - \delta)}{x - \gamma/(1 - \delta)}\right)
    \end{equation} 
    parametrized a curve, $C$, contains $\{(\alpha^m, \delta^n) : (m,n)\in S\}$ and, in particular, it is not a vertical or horizontal line.

     Then by the Mordell-Lang Conjecture solved in \cite{MC95}, we have that there exists a pair of coprime integers $(p,q)$ and a $\mu' \in \C$ such that
    $$ C  = V(y^p - \mu' x^q) \subseteq \C^2,$$
    or equivalently, there exists a non-constant rational function $F(x) \in \C(x)$ such that the parametrization (\ref{eq: para-m-n-1}) satisfies that 
    $$ \frac{c_1(x) - \beta/(1 - \alpha)}{x - \beta/(1 - \alpha)} = \mu F(x)^p,$$
    $$ \frac{c_2(x) - \gamma/(1 - \delta)}{x - \gamma/(1 - \delta)} = F(x)^q ,$$
    
    for the same pair of $p,q \in \Z^*$ and $\mu^q= \alpha^{mq}/\delta^{np}$ for any $(n,m) \in S$.

    Thus, there are infinitely many $\lambda \in \C$ such that Equation (\ref{eq-poly-m-n-1}) and (\ref{eq-poly-m-n-2}) hold for some $m,n \in \N^+$ implies that 
    $$ c_1(x) = \mu F(x)^p\left(x - \frac{\beta}{1 - \alpha}\right) + \frac{\beta}{1 - \alpha}$$
    $$ c_2(x) = F(x)^q \left(x - \frac{\gamma}{1 - \delta}\right) + \frac{\gamma}{1 - \delta},$$
    where $F(x)$ is any rational functions and $\mu$ is as above.
\end{proof}

\begin{prop}\label{prop: non-poly-m-n-root}
    Suppose $f(x) = \alpha x + \beta$ and $g(x) = x/(\gamma x + \delta)$, where $\alpha, \delta \in \C^*$ is not $1$ and $\gamma \in \C^*$. Let $c_1(x)$ and $c_2(x)$ be two rational functions. Then there are infinitely many $\lambda \in \C$ such that $$f^m(\lambda) = c_1(\lambda)$$
    $$ g^n(\lambda) = c_2(\lambda)$$
    holds for some $m,n \in \N^+$ implies that the following holds 
    $$ c_1(x) = \left(x- \frac{\beta}{1 - \alpha}\right)\mu F(x)^p + \frac{\beta}{1 - \alpha}$$
        $$ c_2(x) = \frac{x}{F(x)^q (1 - \gamma x/(1- \delta)) + \gamma /(1 - \delta)},$$
         where $F(x)$ is some non-constant rational function, $p,q$ are some coprime non-zero integers and $\mu \in \C^*$ such that $\mu^q= \alpha^{mq}/\delta^{np}$ for infinitely many pairs of $n,m \in \N^+$.
    
\end{prop}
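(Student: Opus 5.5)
We mirror the proof of Proposition~\ref{prop: poly-m-n-not-root-case}. The only new ingredient is that $g$ must first be diagonalized around its fixed points. As there, if $\alpha$ (resp.\ $\delta$) were a root of unity, then $\{f^m\}$ (resp.\ $\{g^n\}$) would be a finite set, and infinitely many solutions would force $c_1 = f^m$ (resp.\ $c_2=g^n$) for some $m$ (resp.\ $n$), contradicting non-degeneracy. So we may assume neither $\alpha$ nor $\delta$ is a root of unity.

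The key step is to write $g^n$ in a normal form. Set $h(x)=1/x$. Then $h\circ g\circ h^{-1}(y)=\delta y+\gamma$, so
\[
\frac{1}{g^n(x)}=\delta^n\Bigl(\frac1x-\frac{\gamma}{1-\delta}\Bigr)+\frac{\gamma}{1-\delta}.
\]
The equation $g^n(\lambda)=c_2(\lambda)$ is therefore equivalent, away from finitely many $\lambda$ (namely $\lambda=0$, poles and zeros of $c_2$, and the fixed point $\lambda=(1-\delta)/\gamma$), to
\[
\delta^n=\frac{1/c_2(\lambda)-\gamma/(1-\delta)}{1/\lambda-\gamma/(1-\delta)}=:C_2(\lambda).
\]
Similarly, $f^m(\lambda)=c_1(\lambda)$ is equivalent to $\alpha^m=C_1(\lambda)$, where $C_1(x)=\bigl(c_1(x)-\beta/(1-\alpha)\bigr)/\bigl(x-\beta/(1-\alpha)\bigr)$. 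Non-degeneracy gives that $C_1$ is not of the form $\alpha^m$ and $C_2$ is not of the form $\delta^n$. Hence infinitely many $\lambda$ yield a set $S\subseteq(\N^+)^2$ of pairs $(m,n)$ with both projections infinite, since a fixed $m$ can only be realised by finitely many $\lambda$ unless $C_1\equiv \alpha^m$.

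Next we apply Mordell--Lang. The image of $x\mapsto (C_1(x),C_2(x))$ is a rational curve $C\subseteq \G_m^2$ that is neither horizontal nor vertical and contains the infinitely many points $(\alpha^m,\delta^n)$ with $(m,n)\in S$, all lying in the finitely generated group $\langle\alpha\rangle\times\langle\delta\rangle$. By \cite{MC95}, $C$ is a translate of a one-dimensional subtorus, so $C=V(y^p-\mu' x^q)$ for some coprime nonzero $p,q$. Since $C$ is rational and parametrized by $(C_1,C_2)$, there are a non-constant $F\in\C(x)$ and a constant $\mu$ with $C_1=\mu F^p$ and $C_2=F^q$; lifting the parametrization exactly as in Proposition~\ref{prop: poly-m-n-not-root-case} gives this. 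Evaluating at a point of $C$ of the form $(\alpha^m,\delta^n)$ yields the relation $\mu^q=\alpha^{mq}/\delta^{np}$ for all $(m,n)\in S$.

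Finally we solve for $c_1$ and $c_2$. From $C_1=\mu F^p$ we get $c_1$ immediately. From $C_2=F^q$ we get
\[
\frac{1}{c_2}=F^q\Bigl(\frac1x-\frac{\gamma}{1-\delta}\Bigr)+\frac{\gamma}{1-\delta}.
\]
Multiplying by $x$ and inverting gives $c_2(x)=x/\bigl(F^q(1-\gamma x/(1-\delta))+\gamma x/(1-\delta)\bigr)$. This is the form in the statement, except that the constant term of the denominator there reads $\gamma/(1-\delta)$ without the factor $x$; I would double-check this against the stated formula, and I suspect the statement has a typo there.

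I expect the main obstacle to be this normal-form step for $g$. One has to check carefully that the excluded $\lambda$ (namely $0$, the fixed point $(1-\delta)/\gamma$, and zeros and poles of $c_2$) are only finitely many, so that the reduction to points on $C$ loses nothing. The rest is a direct transcription of the previous proposition.
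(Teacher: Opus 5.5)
Your proposal is correct and follows the paper's proof. Both arguments rule out $\alpha,\delta$ being roots of unity, rewrite the conditions as $\alpha^m=C_1(\lambda)$ and $\delta^n=C_2(\lambda)$, apply Mordell--Lang to the rational curve $(C_1,C_2)$ to get $C_1=\mu F^p$, $C_2=F^q$, and then solve back for $c_1,c_2$. Your suspected typo is real: the paper's own proof ends with $c_2(x)=x/\bigl(F(x)^q(1-\gamma x/(1-\delta))+\gamma x/(1-\delta)\bigr)$, exactly as you computed, so the statement's $\gamma/(1-\delta)$ should read $\gamma x/(1-\delta)$.
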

\begin{proof}
 Similarly as in the proof of Proposition \ref{prop: poly-m-n-not-root-case}, we can assume that $\alpha$ and $\delta$ are not roots of unity.
   
   Then, following the same argument as in Proposition \ref{prop: poly-m-n-not-root-case}, having infinitely many $\lambda$ such that 
    $$ \alpha^m \lambda + \frac{1 - \alpha^m}{1 - \alpha} \beta = c_1(\lambda)$$
    $$ \frac{\lambda}{\gamma \lambda (1 - \delta^n)/(1 - \delta) + \delta^n} = c_2(\lambda)$$
    for some $m,n\in \N^+$
    is equivalent to that the set of $(m,n)\in (\N^+)^2$ such that 
    $$ \alpha^m = \frac{c_1(\lambda) - \beta/(1 - \alpha)}{\lambda - \beta/(1 - \alpha)}$$
    $$ \delta^n = \frac{\lambda}{1- \gamma \lambda /(1 - \delta)}\left(1/c_2(\lambda) - \frac{\gamma}{1 - \delta}\right)$$
    hold for some $\lambda \in \C$ is infinite and its projection to both coordinates are infinite as we assumed $\{f,c_1\}$ and $\{g,c_2\}$ are non-degenerate. Let us denote this set as $S \subseteq (\N^+)^2$.
    
    Then, we have that the rational map from $\A^1 \to \G_m^2$
    \begin{equation}\label{eq: para-nonpoly-m-n-1}
        x \to \left(\frac{c_1(x) - \beta/(1- \alpha)}{x - \beta/(1- \alpha)}, \frac{x}{1- \gamma x/(1 - \delta)}\left(1/c_2(x) - \frac{\gamma}{1 - \delta}\right)\right) 
    \end{equation} 
 parameterizes a curve $C \subseteq \G^2_m$ contains $\{(\alpha^m,\delta^n) : (m,n) \in S\}$. By the Mordell-Lang Conjecture \cite{MC95},
    $$ C =V(y^p - \mu' x^q)$$
    for a pair of coprime integers $p,q$ and $\mu' \in \C^*$
    for any $(m,n) \in S$.

    Thus, there exists a non-constant rational function $F(x) \in \C(x)$ such that the parametrization (\ref{eq: para-nonpoly-m-n-1}) satisfies that
    $$\frac{c_1(x) - \beta/(1- \alpha)}{x - \beta/(1- \alpha)} = \mu F(x)^p, $$
    $$\frac{x}{1- \gamma x/(1 - \delta)}\left(1/c_2(x) - \frac{\gamma}{1 - \delta}\right) = F(x)^q ,$$
    where $\mu^q = \alpha^{mq}/\delta^{np}$ for any $(m,n) \in S$. Therefore, we have
    $$ c_1(x) = \mu F(x)^p \left(x - \frac{\beta}{1 - \alpha}\right) + \frac{\beta}{1 - \alpha},$$
    $$ c_2(x) = \frac{x}{F(x)^q(1 - \gamma x/(1 -\delta)) + \gamma x/(1 - \delta)} .$$

\end{proof}
\begin{rmk}\label{rmk: n=m-no-degen}
   
We also note that Proposition \ref{prop: poly-m-n-not-root-case} and \ref{prop: non-poly-m-n-root} remain valid even without assuming that the pairs \(\{f, c_1\}\) and \(\{g, c_2\}\) are non-degenerate, provided that we impose the condition \(n = m \in \N^+\), $c_1 = c_2$ and $\alpha, \delta, \alpha/\delta$ are not roots of unity. To see this, we only need to consider the exceptional case that there are infinitely many $\lambda \in \C$ and a $n \in \N$ such that 
$$ f^n(\lambda) = g^n(\lambda) = c(\lambda).$$
Under the assumption of proposition \ref{prop: non-poly-m-n-root}, this is impossible. While, under the assumption of Proposition \ref{prop: poly-m-n-not-root-case}, this in particular implies that $\alpha^n = \delta^n$ contradicting that $\alpha/\delta$ is not a root of unity. Hence, there are always infinitely many $n \in \N^+$, such that there exists a $\lambda \in \C$ satisfies 
$$ f^n(\lambda) = g^n(\lambda) = c(\lambda).$$
Now, since $\alpha$ and $\delta$ are not root of unity, we can apply the rest of the arguments in the proof verbatim in this case.
\end{rmk}
\begin{rmk}
    It is not hard to show that if $\alpha$ and $\beta$ are assumed to not be roots of unity, then Proposition \ref{prop: poly-m-n-not-root-case} and \ref{prop: non-poly-m-n-root} can be strength to an if and only if statement.

    We verify here the other direction of Proposition \ref{prop: non-poly-m-n-root} assuming $\alpha$ and $\beta$ are not roots of unity. A similar argument also works for Proposition \ref{prop: poly-m-n-not-root-case}.  
    
     Suppose $c_1(x)$ and $c_2(x)$ are given as above in the statement. Then we only need to check that there exist infinitely many $\lambda \in \C$ such that 
    $$ \mu F(\lambda)^p = \alpha^m$$
    $$ F(\lambda)^q= \delta^n$$
    for some $m,n \in \N^+$. Note that for any $(m,n) \in (\N^+)^2$ such that $\mu^q = \alpha^{mq}/\delta^{np}$, we have $\mu = \alpha^{m}/\delta'^{np}$ for some $\delta' \in \C^*$ satisfying $\delta'^q = \delta$. Then, we can find a $\lambda_{m,n} \in \C$ such that $F(\lambda_{m,n}) = \delta'^n$. Note that $\lambda_{m,n}$ satisfies both
    $$ \mu F(\lambda_{m,n})^p = \alpha^m ,$$
    $$F(\lambda_{m,n})^q = \delta^n.$$
    Then, our assumption implies that there are infinitely many such pairs of $(m,n) \in \Z^2$ and hence infinitely many distinct $\lambda_{m,n}$ satisfying the condition since there must be infinitely many distinct $n$'s in these pairs given that $\mu^q = \alpha^{mq}/\delta^{np}$ always holds for them.
\end{rmk}

\begin{prop}\label{prop: trans-mult-common}
    Suppose $f(x) = \alpha x + \beta$ and $g(x) = x +  \gamma$, where $\alpha \neq 1$ and $\alpha \in \overline{\Q}^*$. Suppose $c_1, c_2$ are two rational functions. Then there are infinitely many $\lambda \in \C$ such that 
    $$ f^n(\lambda) = c_1(\lambda)$$
    $$ g^m(\lambda) = c_2(\lambda)$$
    hold for some $m,n \in \N^+$ implies that there exists non-constant rational functions $F(x)$ and $B(x)$ and $d \in \Z$ such that 
    $$ c_1(x) =  F(x)^d\left(x - \frac{\beta}{1- \alpha}\right) + \frac{\beta}{1 - \alpha}$$
    $$ c_2(x) = \gamma B(F(x)) + x.$$
    Moreover, $B(x) \in \overline{\Q}[x] $ and either $\alpha$ or $1/\alpha$ is an algebraic integer.

    In the special case that one requires $n = m$, we have that there are only finitely many $\lambda \in \C$ such that the system of equation hold for some $n \in \N^+$.

\end{prop}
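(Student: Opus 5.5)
Following the proofs of Propositions \ref{prop: poly-m-n-not-root-case} and \ref{prop: non-poly-m-n-root}, I would first reduce to $\alpha$ not a root of unity. If $\alpha$ were a root of unity, $\{f^n\}$ would be finite, and infinitely many solutions would force $c_1=f^n$, contradicting non-degeneracy. I also assume $\gamma\neq 0$, since otherwise $g=\mathrm{id}$ and $c_2=x$ is degenerate. Writing $x_0=\beta/(1-\alpha)$, the two equations become
\[
\alpha^n=\frac{c_1(\lambda)-x_0}{\lambda-x_0},\qquad m=\frac{c_2(\lambda)-\lambda}{\gamma}.
\]
So the rational map $\phi:\A^1\to\G_m\times\A^1$, $x\mapsto (C_1(x),C_2(x))$ with $C_1=(c_1-x_0)/(x-x_0)$ and $C_2=(c_2-x)/\gamma$, parametrizes a rational curve $C$. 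This curve contains infinitely many points of the set $\Gamma=\{(\alpha^n,m)\}$. By non-degeneracy, both projections of these points are infinite, so $C$ is neither vertical nor horizontal. In particular $C_1$ and $C_2$ are non-constant.

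Next I would determine which curves $C$ can contain infinitely many points of $\Gamma$. Let $P(X,Y)$ be the irreducible equation of $C$, so that $P(\alpha^n,m)=0$ for infinitely many $(n,m)$. Take a finitely generated field over which everything is defined. The points $(\alpha^n,m)$ lie in a finitely generated subgroup of $\G_m$ times $\Z$. I would apply a Siegel/Mordell--Lang type argument to the curve $C$ in $\G_m\times\G_a$ (for instance Laurent's theorem, or Siegel's theorem on integral points after specialization).

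The resulting structure should be that $C$ is a graph $Y=B(X)$ with $B$ a polynomial. Indeed, since $Y$ takes integer values at infinitely many points while $X$ ranges over $\langle\alpha\rangle$, the curve should have only one place over $X=\infty$ or over $X=0$. It then follows that $C_2=B(C_1)$ with $B$ a polynomial (possibly in $X^{-1}$, absorbed by replacing $\alpha$ with $1/\alpha$). Setting $F=C_1$ (or a root $F$ with $C_1=F^d$ to make $d$ explicit) gives $c_1=F^d(x-x_0)+x_0$ and $c_2=\gamma B(F)+x$.

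For the arithmetic conditions I would use Galois conjugation. The equation $B(\alpha^n)=m\in\Z$ holds for infinitely many $n$. Applying $\sigma\in\Aut(\C/\overline{\Q})$ shows that $\alpha$ must be algebraic (it is so by hypothesis) and that $B$ can be taken with coefficients in $\overline{\Q}$. I would then compare $p$-adic absolute values: if both $\alpha$ and $1/\alpha$ failed to be integral, then for some prime $v$ we would have $|\alpha|_v>1$, and for some $w$ we would have $|\alpha|_w<1$. Then $|B(\alpha^n)|_v\to\infty$ for $n$ large (if $B$ involves positive powers), contradicting $B(\alpha^n)\in\Z$. The symmetric argument handles negative powers, so either $\alpha$ or $1/\alpha$ is an algebraic integer.

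For the special case $n=m$, the condition becomes $P(\alpha^n,n)=0$ for infinitely many $n$. With $P$ non-trivial, this is impossible: archimedean growth of $B(\alpha^n)$, which is exponential if $|\alpha|\neq1$, cannot match the linear growth of $n$. If every conjugate of $\alpha$ has absolute value $1$, then $\alpha$ is a root of unity by Kronecker's theorem, which we excluded. Hence only finitely many $n$ occur, and each $n$ gives finitely many $\lambda$ by non-degeneracy.

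The main obstacle is the structure step: showing rigorously that a rational curve containing infinitely many points $(\alpha^n,m)$ must be of the form $Y=B(X^{\pm1})$ with $B$ a polynomial. I would first try Siegel's theorem on the curve over the ring $\Z[\alpha,1/\alpha]$, counting places at infinity.
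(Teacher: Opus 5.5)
\textbf{Gap in the structure step.} The missing piece is the structure step you flag as the main obstacle, and the target you set for it is wrong.

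First, $C$ need not be a graph over $X$. For $\alpha=4$ the curve $Y^2=X$ contains every point $(4^n,2^n)$. The correct shape, which the paper takes from \cite{Za03}, is that $C$ is the image of $\phi(t)=(t^d,B(t))$ with $\phi$ generically injective. One then sets $F:=\phi^{-1}\circ\psi$ with $\psi=(C_1,C_2)$; this, not $F=C_1$, is where $d$ and $F$ come from.

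Your Siegel idea does produce this shape, directly from a standard theorem instead of the citation. The points $(\alpha^n,\alpha^{-n},m)$ are $S$-integral on $C\cap(\G_m\times\A^1)\subset\A^3$, where $S$ consists of the archimedean places and those with $|\alpha|_v\neq1$. So this affine curve has genus $0$ and at most two places at infinity. Since $X$ has both a zero and a pole, these places are exactly one zero and one pole of $X$ (``and'', not ``or''), and $Y$ has no other poles. Hence the normalization is $t\mapsto(t^d,B(t))$ with $B\in\overline{\Q}[t,t^{-1}]$.

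\textbf{$B$ is only a Laurent polynomial.} Your valuation argument shows that positive powers in $B$ force $\alpha$ to be integral, and negative powers force $1/\alpha$ to be integral. So the algebraic-integer conclusion does follow. But when $\alpha$ is a unit there is no finite place with which to kill either tail, and the polynomiality claim is in fact false.

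Take $\alpha=2+\sqrt3$, $\beta=0$, $\gamma=1$, $c_1=x^2$, $c_2=2x+1/x$. Then $\lambda=\alpha^n$ satisfies $f^n(\lambda)=c_1(\lambda)$ and $g^m(\lambda)=c_2(\lambda)$ with $m=\alpha^n+\alpha^{-n}\in\N^+$. On the other hand, $F^d=x$ forces $F=x^{\pm1}$, and then $B(y)=y+y^{-1}$.

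So the clause ``$B\in\overline{\Q}[x]$'' cannot be proved by any route. The paper's appeal to \cite[Theorem 1]{CZ05} has the same defect: it expands $B$ as a power series at $0$ (or at $\infty$), which is impossible when $B$ has poles at both. What your plan can establish is the statement with $B\in\overline{\Q}[x,x^{-1}]$.

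\textbf{The case $n=m$.} Your archimedean-growth argument survives this correction. Evaluate at $t_n$ with $t_n^d=\alpha^n$; if $B$ has no positive powers, $B(t_n)$ stays bounded while $n\to\infty$. This is a valid alternative to the paper's argument, which combines Skolem--Mahler--Lech with the absence of nontrivial homomorphisms $\G_m\to\G_a$. Note that Kronecker's theorem must be applied to whichever of $\alpha^{\pm1}$ you have already shown to be integral.
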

\begin{proof}
    First, we note that $\alpha$ cannot be a root of unity as otherwise there will exist a $n_0 \in \N^+$ such that 
    $f^{n_0}(\lambda) = c_1(\lambda)$
    for infinitely many $\lambda \in \C$, since $\{f^n : n \in \N^+\}$ is a finite set. This implies $f^{n_0} = c_1$ contradicting the assumption that $\{f,c_1\}$ is non-degenerate.
    
    Similarly as in the proof of Proposition \ref{prop: poly-m-n-not-root-case}, there are infinitely many $\lambda \in \C$ such that the system of equations holds for some $m,n \in \N^+$ implies that the set of $(m,n) \in (\N^+)^2$ such that
    $$ \alpha^n = \frac{c_1(\lambda) - \beta/(1 - \alpha)}{\lambda - \beta/(1 - \alpha)}$$
    $$ m = \frac{c_2(\lambda) - \lambda}{\gamma},$$
    hold for some $\lambda \in \C$ is infinite and its projection to both coordinates are infinite, since we assumed that $\{f,c_1\}$ and $\{g,c_2\}$ are non-degenerate. Let us denote this set of $(m,n) \in (\N^+)^2$ as $S$.
     This implies that the curve $C$ parametrized by the rational function
     
    $$ x \dashrightarrow \left(\frac{c_1(x) - \beta/(1- \alpha)}{x - \beta/(1- \alpha)} , \frac{c_2(x) - x}{\gamma}\right) $$
    is an algebraic curve contains $\{(\alpha^m, n): (m,n) \in S\}$. Then, by \cite{Za03} (see also the discussion following the first corollary after Theorem 1 in \cite{CZ05}), we have that there exists a non-constant rational function $B(x) \in \C(x)$ and $d \in \Z$ such that $C$ can also be parametrized by the rational map $\phi: \P^1 \dashrightarrow C$ given by
    $$  x \dashrightarrow (x^d, B(x)).$$

    Since $B(\alpha^m) = n$ for all $(m,n) \in S$ and $\alpha \in \overline{\Q}^*$, it is not hard to see that $B(x) \in \overline{\Q}(x) $. In fact, \cite[Theorem 1]{CZ05} implies that $B(x) \in \overline{\Q}[x] \cup \overline{\Q}[1/x]$. To see this, note that since $\alpha$ is not a root of unity, there exists a valuation $\nu$ such that $\nu(\alpha)>0$ or $\nu (1/\alpha) >0$. Suppose $\nu(\alpha) > 0$. Then since $B(\alpha^m) = n$ for all $(m,n) \in S$, we have $B(x)$ is a converging power series with coefficients in $\C_v$ around the origin satisfying the conditions of \cite[Theorem 1]{CZ05}. Thus, \cite[Theorem 1]{CZ05} concludes that $B(x) \in \overline{\Q}[x]$. Similarly, if $\nu(1/\alpha) > 0$, then $B(1/x)$ will satisfy the conditions of \cite[Theorem 1]{CZ05} and the argument will give that $B(1/x) \in \overline{\Q}[x]$ or equivalently $B(x) \in \overline{\Q}[1/x]$.

    We may assume that the rational map $\phi : \P^1 \dashrightarrow C$ is generically injective. Since if not, then there exists a primitive $l$-th root of unity $\mu$, where $l$ divides $d$, such that $\phi(x) = \phi(\mu x)$ for infinitely many $x \in \C$. This implies that $B(\mu x) = B(x) $, or equivalently $B(x) \in \C(x^l)$. Then we can replace the parametrization $\phi$ by 
    $$ x \dashrightarrow (x^{d/l}, B'(x))$$
    where $B'(x^l) = B(x)$.\\

    Furthermore, suppose $B(x) \in \overline{\Q}[x]$. Let $K$ be the finite field extensions of $\Q$ such that $B(x)$ and $\alpha$ are all defined over $K$. If none of $\alpha$ and $1/\alpha$ is in $\mathcal{O}_K$, then there exists a valuation $\nu$ of $K$ such that $\nu(\alpha) < 0$. Then note again that for any $(m,n) \in S$, we have 
    $$ \nu(B(\alpha^m)) = \nu(n)$$
    for all $(m,n) \in S$. However, 
    $$ \nu(B(\alpha^m)) \to -\infty $$
    as $m \to \infty$, while $\nu(n) \geq 0$, which gives the contradiction. Similarly, if $B(x) \in \overline{\Q}[1/x]$, then the assumption that neither $\alpha$ nor $1/\alpha$ is in $\mathcal{O}_K$ will imply the existence of a valuation $\nu$ such that $\nu (1/\alpha) < 0$. Then the same argument will give the contradiction. Thus, we can conclude either $\alpha$ or $1/\alpha$ is an algebraic integer.\\

    Now, since $\psi : \P^1 \dashrightarrow C$ given by 
    $$ x \dashrightarrow \left(\frac{c_1(x) - \beta/(1- \alpha)}{x -\beta/(1 - \alpha)} , \frac{c_2(x) - x}{\gamma}\right)$$
    is another parametrization of the curve $C$, we have 
    $$ F \coloneqq \phi^{-1} \circ \psi : \P^1 \dashrightarrow \P^1 $$
    is a non-constant rational function satisfying that 
    $$ \psi = \phi \circ F.$$
    Explicitly, this implies that 
    $$ \frac{c_1(x) - \beta/(1- \alpha)}{x -\beta/(1 - \alpha)} = F(x)^d,$$
    $$\frac{c_2(x) - x}{\gamma} = B(F(x)). $$
    
    Hence 
    $$c_1(x) =  F(x)^d\left(x - \frac{\beta}{1 - \alpha}\right) + \frac{\beta}{1 - \alpha}$$
    $$c_2(x) = \gamma B(F(x)) + x.$$

    We remark that we can always adjust the sign of $d \in \Z$ and replace $F(x)$ by $1/F(x)$ so that $B(x) \in \overline{\Q}[x]$.\\

    Now, we look at the case when $n = m$. Then, there are infinitely many $\lambda \in \C$ such that the equations hold for some $n \in \N^+$ implies that $C$ is an irreducible algebraic subvariety intersecting $\Orb_{m_\alpha, t_1}((1,0))$ infinitely many times, where $m_\alpha(x) = \alpha x$ and $t_1(y) = y + 1$
    for $x\in \G_m$ and $y \in \G_a$. Then, by the Skolem-Mahler-Lech theorem \cite{Le53}, we have 
    $$ C = (\alpha^i, i) + \overline{\{ (\alpha^{nk}, nk) : k \in \N^+, \forall n \in \N\}},$$
    where $i \in \N$. Note that 
    $H \coloneqq \overline{\{ (\alpha^{nk}, nk) : k \in \N^+, \forall n \in \N\}}$
    is an algebraic subgroup generated by $(\alpha^k, k)$ and the above implies $\dim(H) = \dim(C) = 1$. Let $\pi_1, \pi_2$ be the two projections from $H$ to $\G_m$ and $\G_a$ respectively. 
    
    Suppose both of them are dominant, then $\pi_1$ is a finite morphism with kernel in $$\{1\} \times \G_a.$$ However, the only finite subgroup in it is the trivial group. Thus, $\pi_1$ is actually an isomorphism. Therefore, $\pi_2 \circ \pi^{-1}_1 : \G_m \to \G_a$ gives an non-trivial homomorphism between $\G_m$ and $\G_a$. This is impossible as there is no non-trivial homomorphism between $\G_m$ and $\G_a$. 

    Thus, $C$ must be either vertical or horizontal, which contradicts the assumption that it contains infinitely many points in $\{(\alpha^n, n): n \in \N^+\}$.

\end{proof}

Note that we can also conclude that if $\alpha \in \C^*$ is transcendental in above, then there are only finitely many $\lambda \in \C$ making the condition hold for any rational functions $c_1$ and $c_2$.

\begin{prop}\label{prop: add-mul-trance-case}
Suppose \( f(x) = \alpha x + \beta \) and \( g(x) = x + \gamma \), where \( \alpha \in \C^* \) is a transcendental number. Let \( c_1, c_2 \) be rational functions. Then there exist only finitely many \( \lambda \in \C \) such that 
\[
f^m(\lambda) = c_1(\lambda) \quad \text{and} \quad g^n(\lambda) = c_2(\lambda)
\]
for some \( m, n \in \N^+ \).
\end{prop}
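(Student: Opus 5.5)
We follow the reduction used in the proofs of Propositions \ref{prop: poly-m-n-not-root-case} and \ref{prop: trans-mult-common}, and then replace the hypothesis $\alpha\in\overline{\Q}^*$ by a specialization argument that sends $\alpha$ to a suitably chosen algebraic number.

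\emph{Reduction.} Suppose there are infinitely many such $\lambda$. Non-degeneracy of $\{f,c_1\}$ and $\{g,c_2\}$ (the case $\gamma=0$ is trivial) lets us argue as before. The curve $C$ parametrized by
$$x\dashrightarrow\left(\frac{c_1(x)-\beta/(1-\alpha)}{x-\beta/(1-\alpha)},\ \frac{c_2(x)-x}{\gamma}\right)$$
is then irreducible and contains $\{(\alpha^m,n):(m,n)\in S\}$. Here $S\subseteq(\N^+)^2$ is infinite and has infinite projections to both coordinates. Note that $\alpha$ is not a root of unity, since it is transcendental.

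By \cite{Za03} (as in the proof of Proposition \ref{prop: trans-mult-common}), $C$ is parametrized by $x\mapsto(x^d,B(x))$ for some $d\in\Z$ and some $B\in\C(x)$. Hence $B(\alpha^{m})=n$ for all $(m,n)\in S$. Since the second projection of $S$ is infinite, $B$ is non-constant. It therefore suffices to show that no non-constant $B\in\C(x)$ satisfies $B(\alpha^m)\in\Z$ for infinitely many $m$ with infinitely many distinct values.

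\emph{Specialization.} This is the step I expect to be the main obstacle. The difficulty is that the coefficients of $B$ may be algebraically related to $\alpha$, so we cannot simply treat $\alpha$ as a free variable.

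Let $R=\Z[\alpha,1/\alpha,b_1,\dots,b_k]$, where the $b_i$ are the coefficients of $B$. Choose $R$ large enough to contain inverses of the finitely many elements we need to remain nonzero. These are a nonzero coefficient witnessing non-constancy of $B$, the leading coefficients of the numerator and denominator, and their resultant. Because $\alpha$ is transcendental over $\Q$, we can extend $\{\alpha\}$ to a transcendence basis $\{\alpha,t_1,\dots,t_r\}$ of $\operatorname{Frac}(R)$. Over $\Q(\alpha,t_1,\dots,t_r)$ the ring $R$ is generated by finitely many algebraic elements.

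By a standard specialization (Hilbert-irreducibility / generic fibre) argument, there is a ring homomorphism $\sigma:R\to\overline{\Q}$ with $\sigma(\alpha)=\alpha_0:=2/3$. To arrange this, specialize the $t_i$ to algebraic values outside a proper Zariski-closed set, choose $\alpha_0$ outside a finite bad set (adjusting $2/3$ to another rational of the form $a/b$ with $a,b>1$ coprime if necessary), and then extend $\sigma$ to the algebraic generators. The identities $B(\alpha^m)=n$ are polynomial identities in $R$ with integer right-hand side, so they are preserved. The specialized function $B_0:=B^\sigma\in\overline{\Q}(x)$ is non-constant, and
$$B_0(\alpha_0^m)=n\quad\text{for all }(m,n)\in S.$$

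\emph{Valuative contradiction.} Let $L$ be a number field containing $\alpha_0$ and the coefficients of $B_0$. Choose places $\nu,\nu'$ of $L$ above $2$ and $3$, so that $\nu(\alpha_0)>0$ and $\nu'(\alpha_0)<0$, while $\nu(n),\nu'(n)\ge0$ for every integer $n$.

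Along $m\to\infty$ with $(m,n)\in S$, we have $\alpha_0^m\to0$ $\nu$-adically. If $B_0$ had a pole at $0$, then $\nu(B_0(\alpha_0^m))\to-\infty$, which contradicts $\nu(n)\ge 0$. Likewise $\alpha_0^m\to\infty$ $\nu'$-adically, so $B_0$ has no pole at $\infty$.

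The same limits give more. The values $n=B_0(\alpha_0^m)$ converge $\nu$-adically to $B_0(0)$ and $\nu'$-adically to $B_0(\infty)$. Suppose $B_0$ had a pole $\xi\in\overline{\Q}^*$. We handle this by noting that $\alpha_0^m$ eventually stays away from $\xi$ in both topologies, so at most finitely many $m$ come near it. Alternatively, and more cleanly, we invoke \cite[Theorem 1]{CZ05} exactly as in the proof of Proposition \ref{prop: trans-mult-common}, applied at $\nu$ and at $\nu'$ (using $B_0(1/x)$ at $\nu'$). This gives $B_0\in\overline{\Q}[x]$ and $B_0\in\overline{\Q}[1/x]$.

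It follows that $B_0$ is constant. This contradicts the infinitely many distinct values of $n$, and so there are only finitely many such $\lambda$.
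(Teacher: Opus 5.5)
\textbf{The gap is in your first step, not in the specialization you flagged.} You invoke \cite{Za03} to parametrize $C$ by $x\mapsto(x^d,B(x))$ while $\alpha$ is still transcendental. That structure theorem, like \cite[Theorem~1]{CZ05}, is a number-field statement coming from the Subspace Theorem. The paper uses it only in Proposition~\ref{prop: trans-mult-common}, under the hypothesis $\alpha\in\overline{\Q}^*$. The transcendental case is exactly what the present proposition has to settle by other means. So the function $B$ that you later specialize has not been shown to exist, and everything downstream rests on an unjustified step.

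Two smaller points. The parametrizing point is a $t_m$ with $t_m^d=\alpha^m$, not $\alpha^m$ itself. Also, your remark that $\alpha_0^m$ stays away from a pole $\xi\in\overline{\Q}^*$ rules nothing out; only the appeal to \cite[Theorem~1]{CZ05} at both places does the work there.

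\textbf{How to repair it.} Specialize before invoking \cite{Za03}:
\begin{enumerate}
\item Apply $\sigma$ with $\sigma(\alpha)=\alpha_0=a/b$ to the defining polynomial $P$ of $C$, inverting a nonzero coefficient so that $P_0=P^\sigma\neq0$.
\item Since $C$ is irreducible and neither vertical nor horizontal, each fixed $m$ (resp.\ $n$) appears in only finitely many pairs of $S$. Hence some component of $V(P_0)$ that is neither vertical nor horizontal contains infinitely many points $(\alpha_0^m,n)$.
\item Now \cite{Za03} applies over $\overline{\Q}$. The parameters $t_m$ lie in a fixed number field, with $|t_m|$ tending to $0$ at one of your two places and to $\infty$ at the other. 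Your two-place argument then forces $B_0$ to be constant.
\end{enumerate}

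\textbf{Comparison with the paper.} The repaired route is heavier than the paper's. The paper specializes only down to transcendence degree one and keeps $\alpha_0$ transcendental. The integers $n$ then become constants of the function field, so the term $a_d(n)\alpha_0^{md}$ dominates $P'(\alpha_0^m,n)$ for large $m$ by a plain degree count in $\alpha_0$. Specializing $\alpha$ to an algebraic number discards exactly this feature: at a place $\nu'$ above $3$, $|a_d(n)|_{\nu'}$ can be arbitrarily small as $n$ varies. That is why your approach needs the Subspace-Theorem results at all.
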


\begin{proof}

Let \(K\) be the finitely generated field obtained by adjoining to \(\overline{\mathbb{Q}}\) all coefficients of \(f,g,c_1,c_2\). By hypothesis \(\operatorname{trdeg}(K/\overline{\mathbb{Q}})\ge 1\).

Assume for contradiction that there are infinitely many \(\lambda\in\mathbb{C}\) for which
\[
f^m(\lambda)=c_1(\lambda)\quad\text{and}\quad g^n(\lambda)=c_2(\lambda)
\]
for some \(m,n\in\mathbb{N}^+\). Arguing as in the proof of Proposition~\ref{prop: trans-mult-common}, we obtain an affine curve
\[
C \;=\; V(P(x,y))\subset\mathbb{A}^2,
\]
defined over \(\overline{K}\), which admits the rational parametrization
\[
t \dashrightarrow\Big(\frac{c_1(t)-\beta/(1-\alpha)}{t-\beta/(1-\alpha)},\;\frac{c_2(t)-t}{\gamma}\Big),
\]
and which contains infinitely many points of the form \((\alpha^m,n)\) with \(m,n\in\mathbb{N}^+\).

Choose a subfield \(L\subseteq K\) with \(\operatorname{trdeg}(L/\overline{\mathbb{Q}})=1\) and a specialization
\(\sigma:K\to L \) such that \(\alpha_0\coloneqq\sigma(\alpha)\) is transcendental over \(\overline{\mathbb{Q}}\). Put \(P'(x,y)=\sigma(P)\). By construction, \(P'(\alpha_0^m,n)=0\) for infinitely many pairs \((m,n)\in(\mathbb{N}^+)^2\); denote by \(S\subset(\mathbb{N}^+)^2\) this infinite set of solutions.

Because the pairs \(\{f,c_1\}\) and \(\{g,c_2\}\) are non-degenerate, we can extract an infinite subsequence
\(I=\{(m_i,n_i)\}_{i\ge 1}\subset S\) with \(m_1<m_2<\cdots\) and with \(\{n_i\}_{i \geq 1}\) infinite.

Write the leading \(x\)-term of \(P'(x,y)\) as \(a_d(y)x^d\). Choose \(i\) large so that \(m_i\) exceeds twice of the absolute value of the maximal exponent that appears in any Puiseux expansion of the coefficients of \(P'\) viewed as series in \(\alpha_0\); moreover choose \(i\) even larger if necessary with \(a_d(n_i)\neq0\). Then 
\[
\deg_{\alpha_0}\big(a_d(n_i)\alpha_0^{m_i d}\big)
>\deg_{\alpha_0}\big(P'(\alpha_0^{m_i},n_i)-a_d(n_i)\alpha_0^{m_i d}\big),
\]
which contradicts \(P'(\alpha_0^{m_i},n_i)=0\). Hence there are only finitely many such \(\lambda\).

\end{proof}

\begin{prop}\label{prop: rational-trans-multi-nocommon}
    Suppose $f(x) = x +  \beta$ and $g(x) = x/(\gamma x + \delta)$ where $\delta \in \overline{\Q}^*$ is not $1$ and $\beta, \gamma \in \C^*$. Let $c_1$ and $c_2$ be two rational functions. Then there are infinitely many $\lambda \in \C$ such that 
    $$ f^n(\lambda) = c_1(\lambda)$$
    $$ g^m(\lambda) = c_2(\lambda)$$
    hold for some $m,n \in \N^+$ implies that there exist non-constant rational functions $F(x)$ and $B(x)$ and $d \in \Z$ such that 
    $$ c_1(x) = \beta B(F(x)) + x$$
    $$ c_2(x) = x/\left((1 - F(x)^d)\frac{\gamma}{1 - \delta}x + F(x)^d\right).$$
    Moreover, $B(x) \in \overline{\Q}[x] $ and either $\delta$ or $1/\delta$ is an algebraic integer.

     In the special case that one requires $n = m$, we have that there are only finitely many $\lambda \in \C$ such that the system of equations holds for some $n \in \N^+$.

\end{prop}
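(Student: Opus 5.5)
The plan is to mirror the proof of Proposition~\ref{prop: trans-mult-common}, with the roles of the two maps swapped and the Möbius map $g$ linearized. First, $\delta$ cannot be a root of unity. If it were, $\{g^m\}$ would be a finite set, so some single $g^{m_0}$ would agree with $c_2$ at infinitely many $\lambda$. That forces $c_2=g^{m_0}$, contradicting non-degeneracy. Next we compute the iterates. We have $f^n(x)=x+n\beta$, and, since $1/g(x)=\gamma+\delta/x$, iteration gives
\[
\frac{1}{g^m(x)}=\frac{\delta^m}{x}+\gamma\frac{1-\delta^m}{1-\delta}.
\]
Solving for $\delta^m$ (as in the proof of Proposition~\ref{prop: non-poly-m-n-root}), the conditions $f^n(\lambda)=c_1(\lambda)$ and $g^m(\lambda)=c_2(\lambda)$ become
\[
n=\frac{c_1(\lambda)-\lambda}{\beta},\qquad \delta^m=\frac{\lambda}{1-\gamma\lambda/(1-\delta)}\Big(\frac{1}{c_2(\lambda)}-\frac{\gamma}{1-\delta}\Big).
\]
Non-degeneracy then shows that the set $S$ of pairs $(m,n)$ realized by some $\lambda$ is infinite, with both projections infinite.

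The rational map sending $x$ to the pair of right-hand sides (in the order $(\delta^m$-coordinate$,\ n$-coordinate$)$) therefore parametrizes an irreducible rational curve $C\subset\G_m\times\A^1$. This curve contains infinitely many points $(\delta^m,n)$ and is neither vertical nor horizontal. We then invoke \cite{Za03} (cf.\ \cite{CZ05}), exactly as in Proposition~\ref{prop: trans-mult-common}: $C$ admits a parametrization $\phi(x)=(x^d,B(x))$ with $B$ a non-constant rational function and $d\in\Z$.

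Since $B(\delta^m)=n$ at infinitely many algebraic points, $B\in\overline{\Q}(x)$. Choosing a place $\nu$ with $\nu(\delta)>0$ or $\nu(1/\delta)>0$, \cite[Theorem 1]{CZ05} gives $B\in\overline{\Q}[x]\cup\overline{\Q}[1/x]$. If needed, we flip the sign of $d$ and replace $F$ by $1/F$ to make $B$ a polynomial. The same valuation argument as before then shows that $\delta$ or $1/\delta$ is an algebraic integer: otherwise $\nu(B(\delta^m))\to-\infty$ while $\nu(n)\ge 0$.

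We next reduce to $\phi$ being generically injective by passing to $x^{l}$ when $B\in\C(x^l)$. Then $F:=\phi^{-1}\circ\psi$, where $\psi$ is the original parametrization, is a non-constant rational function with $\psi=\phi\circ F$. Unwinding this gives $c_1(x)=\beta B(F(x))+x$. For $c_2$, the second identity reads
\[
\frac{1}{c_2}=\frac{\gamma}{1-\delta}+\frac{F^d}{x}\Big(1-\frac{\gamma x}{1-\delta}\Big),
\]
which rearranges to $c_2(x)=x/\big((1-F^d)\tfrac{\gamma}{1-\delta}x+F^d\big)$.

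For the case $n=m$, the points $(\delta^n,n)$ form the orbit of $(1,0)$ under $(m_\delta,t_1)$ on $\G_m\times\G_a$. By Skolem--Mahler--Lech, $C$ would be a translate of a one-dimensional algebraic subgroup $H$ whose projections to both factors are dominant. The group-theoretic argument of Proposition~\ref{prop: trans-mult-common} rules this out: there are no nontrivial homomorphisms $\G_m\to\G_a$, and $\{1\}\times\G_a$ has no nontrivial finite subgroups. So $C$ is vertical or horizontal, which is a contradiction.

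The main obstacle is the step invoking \cite{Za03}/\cite{CZ05}. There we must check that the hypotheses used there, namely infinitely many points $(\delta^m,n)$ on a curve with both coordinates varying, still hold after the Möbius change of coordinates. We also need care with the points where the linearizing denominators vanish, so that $\lambda$ gives a genuine point of $C$.
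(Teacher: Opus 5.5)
Your proposal follows the paper's proof essentially line by line. The iterate computation, the formulas for $c_1,c_2$ in terms of $F,B,d$, and the $n=m$ argument are all fine.

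\textbf{The gap.} The step that fails is one you share with the paper: choosing a place $\nu$ with $\nu(\delta)>0$ or $\nu(1/\delta)>0$ to force $B\in\overline{\Q}[x]\cup\overline{\Q}[1/x]$. Excluding a pole of $B$ at $0$ (or at $\infty$) rests on $\nu(n)\ge 0$, and that is only available at a non-archimedean place. If $\delta$ is an algebraic unit, every finite place has $\nu(\delta)=0$. At an archimedean place, $|B(t_m)|\to\infty$ is perfectly compatible with $n\to\infty$.

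This is not just a gap in the write-up: the clause $B\in\overline{\Q}[x]$ is false. Take $\delta=3+2\sqrt2$, whose conjugate is $\delta^{-1}$, so $n_m:=\delta^m+\delta^{-m}\in\N^+$ for all $m\ge1$. Take any $\beta,\gamma\in\C^*$ and
\[
c_1(x)=x+\beta\bigl(x+x^{-1}\bigr),\qquad c_2(x)=\frac{1}{1+\frac{\gamma}{1-\delta}(1-x)}.
\]
Both pairs are non-degenerate: $c_1-x$ is non-constant, and $c_2(0)\neq 0=g^m(0)$. By your own formula for $1/g^m$, $\lambda_m=\delta^m$ satisfies $g^m(\lambda_m)=c_2(\lambda_m)$, and clearly $f^{n_m}(\lambda_m)=c_1(\lambda_m)$. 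So the hypothesis holds.

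Yet the $c_2$-identity in the conclusion is equivalent to
\[
F^d=\frac{x}{1-\gamma x/(1-\delta)}\Bigl(\frac{1}{c_2}-\frac{\gamma}{1-\delta}\Bigr)=x.
\]
Hence $d=\pm1$ and $F=x^{\pm1}$, and then $B(y)=y+y^{-1}$. This is not a polynomial however you flip signs.

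\textbf{What survives.} No proof of the statement as written exists. The paper's argument breaks at the same place; it is inherited from Proposition~\ref{prop: trans-mult-common}, where the existence of a suitable $\nu$ silently requires $\nu$ to be non-archimedean. Your method does give a weaker but correct statement:
\begin{enumerate}
\item Siegel's theorem yields $B\in\overline{\Q}[t,t^{-1}]$, since the curve has at most two places at infinity and they must be $t=0,\infty$.
\item If some finite place has $\nu(\delta)\neq 0$, your valuation argument removes the negative or the positive part of $B$. So $B$ can be taken in $\overline{\Q}[t]$, and then $\delta^{\pm1}$ is integral.
\item If $B$ has both positive and negative powers, the same argument shows $\delta$ is a unit.
\end{enumerate}
Thus ``$\delta$ or $1/\delta$ is an algebraic integer'' and the $n=m$ finiteness both hold. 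However, ``$B\in\overline{\Q}[x]$'' must be weakened to $B\in\overline{\Q}[x,x^{-1}]$, with $B\in\overline{\Q}[x]$ guaranteed only when $\delta$ is not an algebraic unit.
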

\begin{proof}
    Note that $\delta$ cannot be a root of unity given our assumption that $\{g,c_2\}$ is non-degenerate. Since otherwise, infinitely many $\lambda \in \C$ satisfying the equations will result in that $g^m \equiv c_2$ for a $m \in \N^+$ as $\{g^m : m \in \N^+\}$ is a finite set.\\
    
    Suppose there are infinitely many $\lambda \in \C$ such that 
    $$ f^n(\lambda) = c_1(\lambda)$$
    $$ g^m(\lambda) = c_2(\lambda)$$
    hold for some $n,m \in \N^+$. Note that $f^n(x) = x + n \beta$ and $$ g^m (x) = \frac{x}{x(1 - \delta^m)\gamma/( 1- \delta) + \delta^m}.$$
    Then we have that there are infinitely many $\lambda \in \C$ such that 
    $$ n = (c_1(\lambda) - \lambda) / \beta,$$
    $$ \delta^m = \frac{\lambda}{1 - \gamma\lambda/(1 - \delta)} \left(\frac{1}{c_2(\lambda)} - \frac{\gamma}{1 - \delta}\right)$$
    for some $n,m \in \N^+$. This implies that the curve $C \subseteq \G_a \times \G_m$ parametrized by the rational map
    $$  x \dashrightarrow \left((c_1(x) - x) / \beta, \frac{x}{1 - x\gamma/(1 - \delta)} \left(\frac{1}{c_2(x)} - \frac{\gamma}{1 - \delta}\right)\right)  $$
    contains infinitely many points in $$\{(n,\delta^m) : n,m \in \N^+\}.$$

    Then, by the exactly same argument in Proposition \ref{prop: trans-mult-common}, we have that there exists a non-constant rational function $F$, a positive integer $d$ and a rational function $B(x) \in \overline{\Q}[x]\cup \overline{\Q}[1/x]$ such that 
    $$(c_1(x) - x) / \beta = B(F(x)) ,$$
    $$ \frac{x}{1 - x\gamma/(1 - \delta)} \left(\frac{1}{c_2(x)} - \frac{\gamma}{1 - \delta}\right) = F(x)^d.$$
    Moreover, either $\delta$ or $1/\delta$ is an algebraic integer. Also, we can again adjust the sign of $d \in \Z$ and replace $F(x)$ with $1/F(x)$ to assume that $B(x) \in \overline{\Q}[x]$.
    
    Then the conclusion follows from a straightforward manipulation of this system of equations.

    In the special case that $n =m $, again by the Skolem-Mahler-Lech theorem \cite{Le53}, we have that $C$ must be a translation of an algebraic subgroup of $\G_a \times \G_m$. Hence, the same argument in Proposition \ref{prop: trans-mult-common} implies that $C$ must be either vertical or horizontal. This contradicts that $C$ contains infinitely many points in $\{(n, \delta^n) : n \in \N^+\}$.
    
\end{proof}
\begin{prop}\label{prop: rational-add-mul-trance-case}
Suppose \( f(x) =  x + \beta \) and \( g(x) =x/( \gamma x + \delta) \), where \( \delta \in \C^* \) is a transcendental number. Let \( c_1, c_2 \) be rational functions. Then there exist only finitely many \( \lambda \in \C \) such that 
\[
f^n(\lambda) = c_1(\lambda) \quad \text{and} \quad g^m(\lambda) = c_2(\lambda)
\]
for some \( m, n \in \N^+ \).
\end{prop}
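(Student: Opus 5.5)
Our plan is to mirror the proof of Proposition~\ref{prop: add-mul-trance-case}, with the roles of the additive and multiplicative coordinates swapped exactly as in the passage from Proposition~\ref{prop: trans-mult-common} to Proposition~\ref{prop: rational-trans-multi-nocommon}.

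First, suppose for contradiction that there are infinitely many $\lambda\in\C$ with $f^n(\lambda)=c_1(\lambda)$ and $g^m(\lambda)=c_2(\lambda)$ for some $m,n\in\N^+$. Using $f^n(x)=x+n\beta$ and the explicit formula for $g^m$ from the proof of Proposition~\ref{prop: rational-trans-multi-nocommon}, we obtain that the curve $C\subseteq \G_a\times\G_m$ parametrized by
\[
x\dashrightarrow \left(\frac{c_1(x)-x}{\beta},\ \frac{x}{1-\gamma x/(1-\delta)}\left(\frac{1}{c_2(x)}-\frac{\gamma}{1-\delta}\right)\right)
\]
contains infinitely many points $(n,\delta^m)$. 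By non-degeneracy of $\{f,c_1\}$ and $\{g,c_2\}$, the two coordinate projections of this set of pairs are both infinite. Note also that $\delta$ is not a root of unity, being transcendental. Write $C=V(P(x,y))$ with $P$ defined over the finitely generated field $K=\overline{\Q}(\text{coefficients of } f,g,c_1,c_2)$, which contains the transcendental $\delta$.

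Next, we specialize. Choose a subfield $L\subseteq K$ of transcendence degree $1$ over $\overline{\Q}$ and a specialization $\sigma:K\to L$ with $\delta_0:=\sigma(\delta)$ still transcendental. For instance, we can pick a transcendence basis containing $\delta$ and send the other basis elements to generic algebraic values, extended to the finite part. Then $P'=\sigma(P)$ satisfies $P'(n,\delta_0^m)=0$ for infinitely many pairs. We must make sure that $P'$ does not become identically zero, or independent of $y$, on these pairs. We handle this by choosing the specialization generically so that the leading coefficient structure of $P$ is preserved.

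Finally, we run the degree argument in $\delta_0$. View $\overline{\Q}(\delta_0)$ with the valuation at $\delta_0=0$ (or $\infty$), i.e.\ degree in $\delta_0$ after passing to Puiseux expansions of the coefficients of $P'$ in $\overline{L}$. Write $P'(x,y)=\sum_j b_j(x)y^j$ and let $b_e(x)$ be the top coefficient. Extract a subsequence $(n_i,m_i)$ with $m_i\to\infty$ and $b_e(n_i)\neq0$; this is possible since $b_e$ is a nonzero polynomial and only finitely many $n$ are roots of it. Since $n_i\in\Z$ has $\delta_0$-degree $0$, the coefficients $b_j(n_i)$ have $\delta_0$-degrees bounded independently of $i$, up to finitely many exceptional vanishings. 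Once $m_i$ exceeds twice that bound, the term $b_e(n_i)\delta_0^{m_ie}$ strictly dominates in degree, which contradicts $P'(n_i,\delta_0^{m_i})=0$. If $e=0$, then $C$ is vertical, contradicting the infinitude of the $m$-projection.

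The main obstacle is making the specialization and degree bound rigorous. We need the uniform boundedness of the $\delta_0$-degrees of $b_j(n)$ as $n$ ranges over $\Z$. Here we use that the $b_j(x)$ have coefficients in a fixed finite extension of $\overline{\Q}(\delta_0)$, and that evaluating at integers cannot push the valuation above the minimum of the coefficient valuations, though cancellation could lower it. Therefore we would rather bound the dominant term from below and the rest from above. For the lower bound on $b_e(n_i)$, we use that its valuation is at most finitely many values except on a thin set, possibly by passing to a further subsequence or switching to the valuation at $\infty$.
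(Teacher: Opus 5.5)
Your proposal is correct and follows the paper's route: the paper's proof simply reruns the specialization-and-degree argument of Proposition~\ref{prop: add-mul-trance-case} with the additive and multiplicative coordinates swapped, which is what you do, in more detail than the paper gives. The cancellation worry you flag is harmless: factor $b_e(x)=c\prod_i(x-r_i)$ over $\overline{L}$, and note that the residue field of the $\delta_0$-adic valuation $w$ (at $0$ or at $\infty$) has characteristic $0$, so $w(n-r_i)=\min\{0,w(r_i)\}$ for all but at most one $n\in\Z$ per root; hence $w(b_e(n))$ is constant off a finite set of $n$, and since $C$ is neither vertical nor horizontal you can choose $(n_i,m_i)$ avoiding that set with $m_i\to\infty$.
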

\begin{proof}

    The proof of Proposition~\ref{prop: add-mul-trance-case} essentially shows that an affine curve that is neither vertical nor horizontal cannot contain an infinite sequence of points of the form  
\[
\{(n, \delta^m) : (n, m) \in I\},
\]
where \( I = \{(m_i, n_i) : i \in \N\} \subseteq (\N^+)^2 \) with \( m_1 < m_2 < \cdots \) and \( \{n_i : i \in \N\} \) infinite.  

However, suppose there exist infinitely many such \( \lambda \in \C \). Since the pairs \( \{f, c_1\} \) and \( \{g, c_2\} \) are non-degenerate, the argument in the proof of Proposition~\ref{prop: rational-trans-multi-nocommon} implies that the curve \( C \subseteq \A^2 \) parametrized by  
\[
x \dashrightarrow \left( \frac{c_1(x) - x}{\beta}, \, \frac{x}{1 - x\gamma/(1 - \delta)} \left(\frac{1}{c_2(x)} - \frac{\gamma}{1 - \delta}\right) \right)
\]
must contain such an infinite sequence of points \( (n, \delta^m) \).  
Since \( C \) is clearly neither vertical nor horizontal, this leads to a contradiction.

\end{proof}

\begin{rmk}\label{rmk: n=m-nondeg-2}
    Similarly, the finiteness statement of Proposition \ref{prop: trans-mult-common}, \ref{prop: add-mul-trance-case}, \ref{prop: rational-trans-multi-nocommon} and \ref{prop: rational-add-mul-trance-case} remain valid without assuming $\{f,c_1\}$ and $\{g,c_2\}$ are non-degenerate providing that $n = m \in \N^+$, $c_1 = c_2$, $\gamma \neq 0$ and $\alpha/\delta$ is not a root of unity. We only need to handle the case that there exists a $n \in \N^+$ and an infinite set of $\lambda \in \C$ such that 
    $$ f^n(\lambda) = c(\lambda)$$
    $$ g^n(\lambda) = c(\lambda)$$
    where $c \coloneqq c_1 = c_2$. This is impossible under the assumption of Propositions \ref{prop: rational-trans-multi-nocommon} and \ref{prop: rational-add-mul-trance-case}. While, under the assumption of Propositions \ref{prop: trans-mult-common} and \ref{prop: add-mul-trance-case}. Since the assumption of these propositions is that one of $\alpha$ and $\delta$ is $1$, we have these imply in particular that $\alpha^n = \delta^n = 1 $ and hence $\alpha/\delta$ is a root of unity. The rest of the cases where for each $n \in \N^+$ there are at most finitely many $\lambda$ makes the above equations hold follow verbatim from the proof of these propositions.
\end{rmk}

The propositions proved in this subsection resolve Question~\ref{qu: automorphisms-common-zeros} 
except in the case where both \( f \) and \( g \) are translation maps. 
In this situation, the problem essentially reduces to the classification of curves containing 
infinitely many integer points \cite{ABP09}, which contains a very broad class of curves. 
Consequently, it is unrealistic to expect a classification result as explicit as those obtained 
in the preceding propositions.

Nevertheless, for the purposes of many applications---and in particular for those considered in 
this paper, where \( f \) and \( g \) are assumed to generate a free semigroup under composition---this exceptional case need not be considered. Indeed, when both \( f \) and \( g \) are translations, they necessarily commute and, therefore, cannot generate a free semigroup.

We summarize the classification we obtained in this subsections in the following two Theorems: 
\begin{thm}[Theorem \ref{thm: main-classification-common-root}]
    Let $f(x) = \alpha x + \beta$ and $g(x) = \delta x + \gamma$, where $\alpha, \delta \in \C^*$ and $\beta, \gamma \in \C$. Let $c_1$ and $c_2$ be two rational functions such that $\{f, c_1\}$ and $\{g,c_2\}$ are non-degenerate. Then there are infinitely many $\lambda \in \C$ such that 
    $$ f^m(\lambda) = c_1(\lambda)$$
    $$ g^n(\lambda) = c_2(\lambda)$$
    holds for some $m,n \in \N^+$ implies the following holds:
    \begin{enumerate}
        \item If $\alpha$ and $\delta$ are not $1$, then 
         $$ c_1(x) = \mu F(x)^p\left (x - \frac{\beta}{1-\alpha}\right) + \frac{\beta}{1 - \alpha}$$
    $$c_2(x) = F(x)^q\left(x - \frac{\gamma}{1 - \delta}\right) + \frac{\gamma}{1 - \delta},$$
    where $F(x)$ is a non-constant rational function, $p,q$ are non-zero coprime integers and $\mu \in \C^*$ such that $\mu^q= \alpha^{mq}/\delta^{np}$ for infinitely many pairs of $(m,n) \in (\N^+)^2$.
    \item If $\alpha \in \C^* \setminus \{1\}$ and $\delta = 1$, then $\alpha \in \overline{\Q}^*$ and there exist non-constant rational functions $F(x)$ and $B(x)$ and $d \in \Z$ such that 
    $$ c_1(x) =  F(x)^d\left(x - \frac{\beta}{1- \alpha}\right) + \frac{\beta}{1 - \alpha}$$
    $$ c_2(x) = \gamma B(F(x)) + x.$$
    Moreover, $B(x) \in \overline{\Q}[x] $ and either $\alpha$ or $1/\alpha$ is an algebraic integer.
    \end{enumerate}
\end{thm}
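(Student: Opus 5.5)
This restated theorem is a summary of results already proved in this subsection, so the plan is to assemble them case by case. Throughout, the non-degeneracy of $\{f,c_1\}$ and $\{g,c_2\}$ plays the role of the hypotheses $c_1\notin\{f^n\}$ and $c_2\notin\{g^n\}$, which is exactly what the earlier propositions assume.

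For part (1), assume $\alpha\neq 1$ and $\delta\neq 1$. Then the hypotheses are exactly those of Proposition~\ref{prop: poly-m-n-not-root-case}. That proof first rules out roots of unity using non-degeneracy. It then rewrites the two equations as
\[
\alpha^m=\frac{c_1(\lambda)-\beta/(1-\alpha)}{\lambda-\beta/(1-\alpha)},\qquad \delta^n=\frac{c_2(\lambda)-\gamma/(1-\delta)}{\lambda-\gamma/(1-\delta)}.
\]
Next it observes that the rational curve $C\subseteq\G_m^2$ parametrized by these two quotients contains infinitely many points $(\alpha^m,\delta^n)$. Finally, it applies Mordell--Lang \cite{MC95} to conclude that $C$ is a coset $V(y^p-\mu' x^q)$. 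Pulling this back gives the displayed formulas for $c_1,c_2$ with a common non-constant $F$, together with the stated condition $\mu^q=\alpha^{mq}/\delta^{np}$. So part (1) is verbatim that proposition.

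For part (2), assume $\alpha\neq1$ and $\delta=1$, so $g(x)=x+\gamma$. Here $\gamma\neq 0$, since otherwise $g=\id$ and every $c_2$ satisfying the hypothesis would force $c_2=x=g^n$, contradicting non-degeneracy. First I show that $\alpha\in\overline{\Q}^*$: this is exactly the content of Proposition~\ref{prop: add-mul-trance-case}, which gives finiteness whenever $\alpha$ is transcendental. Hence infinitely many $\lambda$ force $\alpha$ to be algebraic. With $\alpha\in\overline{\Q}^*$, Proposition~\ref{prop: trans-mult-common} applies directly. Note that its roles of $m,n$ are swapped relative to the theorem, but that is harmless since $m,n$ range independently over $\N^+$. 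It yields the non-constant $F$, the exponent $d$, the function $B\in\overline{\Q}[x]$ (after the sign adjustment $F\mapsto 1/F$ explained there), and the conclusion that $\alpha$ or $1/\alpha$ is an algebraic integer.

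The only genuine point to check is bookkeeping. I need to confirm that the hypotheses match, in particular that the swapped indices cause no issue and that $\gamma\neq0$ as argued above. I also need to confirm that $F$ is non-constant in each case. In part (1) this holds because $C$ is not a point: its projections are infinite. In part (2) it holds because $F=\phi^{-1}\circ\psi$ and $\psi$ is non-constant. Nothing new beyond the cited propositions should be needed.
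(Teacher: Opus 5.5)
Your proposal is correct and follows the paper's proof exactly: part (1) is Proposition~\ref{prop: poly-m-n-not-root-case}, and part (2) combines Proposition~\ref{prop: add-mul-trance-case} (to exclude transcendental $\alpha$) with Proposition~\ref{prop: trans-mult-common}. Your extra check that $\gamma\neq 0$ is a worthwhile detail the paper leaves implicit: if $\gamma=0$ then $g=\id$ would force $c_2=x=g^n$. The check is needed because the proof of Proposition~\ref{prop: trans-mult-common} divides by $\gamma$.
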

\begin{proof}
    This is a combination of Propositions \ref{prop: poly-m-n-not-root-case}, \ref{prop: trans-mult-common} and \ref{prop: add-mul-trance-case}. Note that Proposition \ref{prop: add-mul-trance-case} rules out that $\alpha \in \C^* \setminus \overline{\Q}^*$ in the second case.
\end{proof}
\begin{thm}[Theorem \ref{thm: main-classification-no-common-root}]
    Let $f(x) = \alpha x + \beta$ and $g(x) = x/(\gamma x + \delta)$, where $\alpha, \delta, \beta, \gamma \in \C^*$. Let $c_1$ and $c_2$ be two rational functions such that $\{f, c_1\}$ and $\{g,c_2\}$ are non-degenerate. Then there are infinitely many $\lambda \in \C$ such that 
    $$ f^m(\lambda) = c_1(\lambda)$$
    $$ g^n(\lambda) = c_2(\lambda)$$
    holds for some $m,n \in \N^+$ implies the following holds:
    \begin{enumerate}
        \item If $\alpha$ and $\delta$ are not $1$, then $$ c_1(x) = \left(x- \frac{\beta}{1 - \alpha}\right)\mu F(x)^p + \frac{\beta}{1 - \alpha}$$
        $$ c_2(x) = \frac{x}{F(x)^q (1 - \gamma x/(1- \delta)) + \gamma /(1 - \delta)},$$
         where $F(x)$ is some non-constant rational function, $p,q$ are some coprime non-zero integers and $\mu \in \C^*$ such that $\mu^q= \alpha^{mq}/\delta^{np}$ for infinitely many pairs of $n,m \in \N^+$.
         \item If $\delta \in \C^* \setminus \{1\}$ and $\alpha = 1$, then $\delta \in \overline{\Q}^*$ and there exist non-constant rational functions $F(x)$ and $B(x)$ and $d \in \Z$ such that 
    $$ c_1(x) = \beta B(F(x)) + x$$
    $$ c_2(x) = x/\left((1 - F(x)^d)\frac{\gamma}{1 - \delta}x + F(x)^d\right).$$
    Moreover, $B(x) \in \overline{\Q}[x] $ and either $\delta$ or $1/\delta$ is an algebraic integer.
    \end{enumerate}
\end{thm}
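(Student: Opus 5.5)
The proof is a matter of assembling the propositions of this subsection, in parallel with the proof of the restated Theorem~\ref{thm: main-classification-common-root}. We are in the setting $f(x)=\alpha x+\beta$, $g(x)=x/(\gamma x+\delta)$ with $\alpha,\delta,\beta,\gamma\in\C^*$. We assume $\{f,c_1\}$ and $\{g,c_2\}$ are non-degenerate and that there are infinitely many $\lambda$ with $f^m(\lambda)=c_1(\lambda)$ and $g^n(\lambda)=c_2(\lambda)$. The argument splits according to whether $\alpha$ equals $1$.

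\textbf{Case (1): $\alpha\neq 1$ and $\delta\neq 1$.} Here Proposition~\ref{prop: non-poly-m-n-root} applies directly. Its proof first uses non-degeneracy to rule out $\alpha$ or $\delta$ being a root of unity. The two fixed-point normalizations then turn the condition into points $(\alpha^m,\delta^n)$ on the curve parametrized by (\ref{eq: para-nonpoly-m-n-1}). Finally, Mordell--Lang identifies that curve as a translate $y^p=\mu' x^q$ of a subtorus, which yields the stated shapes of $c_1$ and $c_2$ with $\mu^q=\alpha^{mq}/\delta^{np}$.

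One detail to check is the denominator of $c_2$. Solving
\[
\frac{x}{1-\gamma x/(1-\delta)}\left(\frac1{c_2}-\frac{\gamma}{1-\delta}\right)=F^q
\]
gives $1/c_2=\bigl(F^q(1-\gamma x/(1-\delta))+\gamma x/(1-\delta)\bigr)/x$. So the constant term in the denominator of $c_2$ should carry a factor of $x$; I would record it that way, consistent with the last display of the proof of Proposition~\ref{prop: non-poly-m-n-root}.

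\textbf{Case (2): $\alpha=1$ and $\delta\neq1$.} Now $f$ is the translation $x+\beta$ with $\beta\neq0$. I first show $\delta\in\overline{\Q}^*$. If $\delta$ were transcendental, Proposition~\ref{prop: rational-add-mul-trance-case} would give only finitely many $\lambda$, contradicting the hypothesis. With $\delta$ algebraic and $\delta\ne1$, Proposition~\ref{prop: rational-trans-multi-nocommon} applies. It gives non-constant $F$ and $B$ with $B\in\overline{\Q}[x]$ (after replacing $F$ by $1/F$ and changing the sign of $d$ if necessary) and $d\in\Z$, and it shows that $\delta$ or $1/\delta$ is an algebraic integer. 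The formulas for $c_1$ and $c_2$ follow by solving
\[
(c_1-x)/\beta=B(F),\qquad \frac{x}{1-\gamma x/(1-\delta)}\left(\frac1{c_2}-\frac{\gamma}{1-\delta}\right)=F^d .
\]
This gives $1/c_2=\bigl(F^d+(1-F^d)\gamma x/(1-\delta)\bigr)/x$, matching the statement.

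The main obstacle is not in this assembly; it lies in the ingredients already proved. Those are the Corvaja--Zannier input giving $B\in\overline{\Q}[x]\cup\overline{\Q}[1/x]$ together with the integrality of $\delta^{\pm1}$, and the specialization argument in the transcendental case. Here I would only verify that the hypotheses of each cited proposition are met: non-degeneracy, $\beta,\gamma\neq0$, and $\delta\neq1$. I would also note that the statement does not cover the remaining subcase $\delta=1$, $\alpha\ne1$ in this normal form.
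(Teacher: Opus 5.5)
Your proposal is correct and matches the paper's proof: the theorem is assembled from Proposition~\ref{prop: non-poly-m-n-root} for case (1), and from Propositions~\ref{prop: rational-add-mul-trance-case} (which rules out transcendental $\delta$) and~\ref{prop: rational-trans-multi-nocommon} for case (2). Your correction is also right: in case (1) the constant term in the denominator of $c_2$ should be $\gamma x/(1-\delta)$, which agrees with the final display in the proof of Proposition~\ref{prop: non-poly-m-n-root}.
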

\begin{proof}
    This is a combination of Propositions \ref{prop: non-poly-m-n-root}, \ref{prop: rational-trans-multi-nocommon} and \ref{prop: rational-add-mul-trance-case}. Note that Proposition \ref{prop: rational-add-mul-trance-case} rules out that $\delta \in \C^* \setminus \overline{\Q}^*$ in the second case.
\end{proof}

\subsection{Applications on the system of $\{f,g,c\}$}
In this subsection, we demonstrate a few applications of the classification results obtained above. In particular, we strengthen \cite[Theorem 1]{HT17} (Theorem \ref{thm: f^m-g^n-c-poly}) and also provide an alternative proof to \cite[Theorem 1.3]{NZ25}.

\begin{cor}\label{cor: mul-algebraic-case}
    Suppose $f(x) = \alpha x + \beta$ and $g(x) = \delta x + \gamma$ and $\alpha, \delta \in \C^*$ and they are not $1$. Assume $c$ is a polynomial and $\{f,c\}$, $\{g,c\}$ are non-degenerate. Then, there are infinitely many $\lambda \in \C$ such that
    $$ f^m(\lambda) = g^n(\lambda) = c(\lambda)$$
    holds for some $(m,n) \in (\N^+)^2$ implies that $f$ and $g$ won't generate a free semigroup under compositions.
    
\end{cor}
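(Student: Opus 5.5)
The plan is to apply Proposition~\ref{prop: poly-m-n-not-root-case} with $c_1=c_2=c$, and then use that $c$ is a polynomial to force $f$ and $g$ to share a fixed point. Put $a=\beta/(1-\alpha)$ and $b=\gamma/(1-\delta)$, the finite fixed points of $f$ and $g$. The proposition gives a non-constant $F\in\C(x)$, coprime non-zero integers $p,q$ and $\mu\in\C^*$ with
\[
\frac{c(x)-a}{x-a}=\mu F(x)^p,\qquad \frac{c(x)-b}{x-b}=F(x)^q .
\]
If $a=b$, conjugating by $x\mapsto x+a$ turns $f$ and $g$ into $\alpha x$ and $\delta x$. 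These commute, so $f\circ g=g\circ f$ is a nontrivial relation and the semigroup is not free. It therefore suffices to show that $a\neq b$ leads to a contradiction, and the rest of the argument assumes $a\neq b$.

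\textbf{Step 1: restrict the zeros and poles of $F$.} Since $c$ is a polynomial, $(c-a)/(x-a)$ has no zeros or poles in $\C$ except possibly a simple pole at $x=a$ (when $c(a)\neq a$) and zeros at the roots of $c-a$. The same holds for $(c-b)/(x-b)$ with $b$ in place of $a$. Comparing orders $\operatorname{ord}_z$ at finite points $z$, we get $p\,\operatorname{ord}_z F\ge 0$ for $z\neq a$ and $q\,\operatorname{ord}_z F\ge 0$ for $z\neq b$.
\begin{itemize}
\item If $p,q$ have the same sign, replace $F$ by $F^{-1}$ if necessary so that $p,q>0$. Then $F$ has no pole outside $\{a,b\}$. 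A pole of $F$ at $b$ would give a pole of $F^p$ at $b\neq a$, which is impossible, and symmetrically at $a$. Hence $F$ is a polynomial.
\item If $p,q$ have opposite signs, $F$ has neither zeros nor poles outside $\{a,b\}$. So $F=k(x-a)^s(x-b)^t$ with $|s|,|t|\le 1$ from the simple-pole bounds.
\end{itemize}

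\textbf{Step 2: use the linear identity.} Subtracting the two expressions for $c$ gives
\[
F(x)^q(x-b)-\mu F(x)^p(x-a)=b-a\neq 0 ,
\]
a nonzero constant.
\begin{itemize}
\item In the polynomial case with $p\neq q$, the two terms have different degrees $q\deg F+1\neq p\deg F+1$, so the left side is non-constant, a contradiction.
\item If $p=q$, coprimality gives $p=q=1$. Then $F\cdot\bigl((1-\mu)x+\mu a-b\bigr)=b-a$, which forces $F$ to be constant, again a contradiction.
\item In the opposite-sign case, substitute $F=k(x-a)^s(x-b)^t$. Checking the finitely many exponent patterns, one compares orders at $x=a$, $x=b$ and $\infty$ on both sides of the identity. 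The goal is that every pattern either makes $F$ constant or makes $c-a$ or $c-b$ fail to be a polynomial, contradicting the hypothesis on $c$.
\end{itemize}

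The main obstacle I expect is this last finite case check in the opposite-sign situation, where cancellations between the two terms can occur. I would first do it by degree and order counting at $a$, $b$ and $\infty$. If some pattern survives, I would fall back on showing that the surviving pattern forces $a=b$ or forces a multiplicative relation among $\alpha$ and $\delta$ (via $\mu^q=\alpha^{mq}/\delta^{np}$) that yields a nontrivial word identity between $f$ and $g$, which again shows non-freeness.
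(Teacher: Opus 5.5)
Your Step 1 and the same-sign half of Step 2 are correct and match the paper. In the same-sign case the paper also shows $F$ is a polynomial unless $a=b$, then compares degrees to force $p=q$, $\mu=1$ and $a=b$. The gap is in the opposite-sign case, and it is not a routine check left to finish. Your declared reduction, that $a\neq b$ always leads to a contradiction, is false. With $p>0>q$, your own order bounds give $F=k(x-a)^s(x-b)^t$ with $s\in\{0,-1\}$ and $t\in\{0,1\}$, where $s=-1$ forces $p=1$ and $t=1$ forces $q=-1$. The patterns $(s,t)=(-1,0)$ and $(0,1)$ do die by degree. The pattern $(s,t)=(-1,1)$ does not: here $F=k(x-b)/(x-a)$ with $(p,q)=(1,-1)$, and both $c-a=\mu k(x-b)$ and $c-b=k^{-1}(x-a)$ are polynomials. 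The linear identity (whose right-hand side is $a-b$, not $b-a$) gives $k^{-1}=\mu k$ and $k^{-1}(b-a)=a-b$, so $k=-1$, $\mu=1$ and $c(x)=a+b-x$. This configuration really occurs with $a\neq b$. Take $\delta=\alpha^{-1}$ with $\alpha$ not a root of unity and $c(x)=a+b-x$; then $\lambda_m=a+(b-a)/(\alpha^m+1)$ satisfies $f^m(\lambda_m)=g^m(\lambda_m)=c(\lambda_m)$ for every $m\in\N^+$. So no order counting at $a$, $b$ and $\infty$ can close this case.

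What is missing is exactly the fallback you only gesture at, and it must be carried out. In this configuration $\alpha^m=\mu F(\lambda)$ and $\delta^n=F(\lambda)^{-1}$, so $\alpha^m\delta^n=\mu=1$ for infinitely many pairs $(m,n)$. Equivalently, this is $\mu^q=\alpha^{mq}/\delta^{np}$ with $\mu=1$ and $(p,q)=(1,-1)$. Take two distinct such pairs $(m_1,n_1)\neq(m_2,n_2)$. Each $f^{m_i}\circ g^{n_i}$ has linear coefficient $\alpha^{m_i}\delta^{n_i}=1$, so it is a translation, and translations commute. Hence $f^{m_1}\circ g^{n_1}\circ f^{m_2}\circ g^{n_2}=f^{m_2}\circ g^{n_2}\circ f^{m_1}\circ g^{n_1}$ is a relation between two distinct words, and the semigroup is not free. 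This is how the paper finishes; without it your argument leaves the one non-contradictory configuration unresolved.
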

\begin{proof}
    By Proposition \ref{prop: poly-m-n-not-root-case}, $c(x)$ must satisfy 
    \begin{equation}\label{eq: poly-m-n-free-1}
        c(x) = \mu F(x)^p \left(x - \frac{\beta}{1 - \alpha}\right) + \frac{\beta}{ 1- \alpha} = F(x)^q\left(x - \frac{\gamma}{1 - \delta }\right) + \frac{\gamma}{1 - \delta}
    \end{equation}
    for some $\mu \in \C$, non-constant rational function $F(x)$ and a pair of coprime non-zero integers $p,q$. Moreover, $\mu^q = \alpha^{mq}/\delta^{np}$ for infinitely many distinct pair of $(m,n) \in (\N^+)^2$.  \\

    We first handle the case when $p$ and $q$ are different signs. Without loss of generality, we assume that $p$ is positive and $q$ is negative. Then, denoting $F(x) = P(x)/Q(x)$ for a pair of coprime polynomials $P$ and $Q$, we have
    \begin{equation}\label{eq: p>0-q<0-f-g-c-p-1}
        c(x) = \mu \left(\frac{P(x)}{Q(x)} \right)^p \left(x - \frac{\beta}{1 - \alpha}\right) + \frac{\beta}{ 1- \alpha} = \left(\frac{Q(x)}{P(x)} \right)^{-q}\left(x - \frac{\gamma}{1 - \delta }\right) + \frac{\gamma}{1 - \delta}.
    \end{equation}
    Since $c(x)$ is a polynomial, we must have $$Q(x)^{-p}\left(x - \frac{\beta}{1 - \alpha}\right)$$
    $$ P(x)^{q}\left(x - \frac{\gamma}{1 - \delta }\right)$$
    are polynomials.

 Let us first suppose $Q(x)$ is a constant, then, since $F(x)$ is non-constant, we have 
    $$ F(x) = k\left(x - \frac{\beta}{1 - \alpha}\right)$$
    for some $k \in \C^*$ and $q = 1$. Then Equation (\ref{eq: p>0-q<0-f-g-c-p-1}) implies that
    $$ c(x) = \mu k^p \left(x - \frac{\beta}{1 - \alpha}\right)^{p+1} + \frac{\beta}{1- \alpha} = k^{-1} + \frac{\gamma}{1 - \delta},$$
    which is a contradiction.

    Now, suppose $Q(x)$ is not a constant, then the first equality of Equation (\ref{eq: p>0-q<0-f-g-c-p-1}) implies that $p = 1$ and $$F(x) = P(x)/\left(x - \frac{\beta}{1 - \alpha}\right).$$ If $P(x)$ is a constant, then the same argument as above gives us a contradiction. Thus, together with the second equality of Equation (\ref{eq: p>0-q<0-f-g-c-p-1}), we have that 
    $$ F(x) = k\left(x - \frac{\gamma}{1 - \delta }\right)/\left(x - \frac{\beta}{1 - \alpha}\right)$$
    for some $k \in \C^*$ and $q = -1$. Now, Equation (\ref{eq: p>0-q<0-f-g-c-p-1}) becomes 
    \begin{equation}
        c(x) = \mu k\left(x - \frac{\gamma}{1 - \delta }\right) + \frac{\beta}{1 - \alpha} = k^{-1}\left(x - \frac{\beta}{1 - \alpha}\right) + \frac{\gamma}{ 1- \delta}.
    \end{equation}
    Thus, 
    $$ \mu k  =k^{-1} $$
    $$ (k^{-1} + 1) \frac{\beta}{1- \alpha} = (k^{-1} + 1)\frac{\gamma}{1- \delta}.$$
    Note that if $ k^{-1} + 1 \neq 0$ then 
    $$ \frac{\beta}{1- \alpha} = \frac{\gamma}{1- \delta}$$
    implies that $f$ and $g$ are conjugated together to two scaling maps and hence commute to each other. 
    
    If $ k^{-1} = -1$, then $\mu = 1$. Then, since we can find two distinct pairs of $(m_1,n_1), (m_2, n_2) \in (\N^+)^2$ such that $$1 = \mu^q = \alpha^{m_1q} \delta^{-n_1p} = \alpha^{m_2q}\delta^{-n_2p},$$
    we have 
    $$ f^{-m_1q} \circ g^{n_1p} (x) =  x + \nu_1 $$
    $$ f^{-m_2q} \circ g^{n_2p} (x) =  x + \nu_2$$
    for some $\nu_1, \nu_2 \in \C$. Then, they are commuting to each other and hence $f$ and $g$ won't generate a free semigroup under composition.\\

    Now, we suppose that $p$ and $q$ are of equal sign and without loss of generality we assume that they are both positive. Since we require that $c(x)$ is a polynomial, if $F(x)$ is not a polynomial then $F(x) = P(x)/Q(x)$, for some coprime polynomials $P,Q$, where $$Q(x)^{-p}(x - \beta/(1 - \alpha)),$$
    $$ Q(x)^{-q}(x - \gamma/(1 - \delta))$$
    are polynomials. This is only possible if $Q$ is a constant or $$\beta/(1 - \alpha) =\gamma/(1 - \delta).$$ The later implies that $f$ and $g$ share two common fixed points and, therefore, commute to each other, as they can be simultaneously conjugated to scaling maps. 

    Now, we assume that $Q(x)$ is a constant, then $F(x)$ is a polynomial. Then, 
    Equation (\ref{eq: poly-m-n-free-1}) implies that $\deg(F^p) = \deg(F^q)$. Since $F(x)$ is not a constant, we have $p = q$. Then by looking at the leading coefficients of the highest degree term in Equation (\ref{eq: poly-m-n-free-1}), we have $\mu = 1$ and, denoting $H(x) \coloneqq F(x)^p = F(x)^q$, 
    $$ H(x) (x - \beta/(1 - \alpha)) + \beta/(1 - \alpha) = H(x)(x - \gamma/(1 - \delta)) + \gamma/(1 - \delta).$$
    This is equivalent to 
    $$ -\beta/(1- \alpha)H(x) + \beta/(1 - \alpha) = -\gamma/(1 - \delta)H(x) + \gamma/(1 - \delta).$$
    Now, looking at the coefficients of $\deg(H(x))$ terms on both sides of the equation, we have 
    $$ \beta/(1 - \alpha) = \gamma/(1 - \delta).$$
    This again implies that $f$ and $g$ commute to each other.
    
\end{proof}
\begin{prop}\label{prop: non-trance-add-mul-nonfree}
    Let $f(x) = \alpha x + \beta $ and $g(x) = x + \gamma$. Suppose $\alpha \in \overline{\Q}^*$. Then the semigroup generated by $f(x)$ and $g(x)$ under compositions are not free.
\end{prop}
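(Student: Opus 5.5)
The plan is to reduce by conjugation to the normal form $f(x)=\alpha x$, $g(x)=x+1$, and then produce two distinct words in $f,g$ that both equal the same affine map, using the minimal polynomial of $\alpha$. Freeness of the semigroup is a property of the pair up to simultaneous conjugation by an automorphism, since conjugation preserves every relation between words. So I may normalize freely.

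First I dispose of the degenerate cases.
\begin{enumerate}
\item If $\gamma=0$, then $g=\id$, so $f\circ g=g\circ f$. These are two distinct words representing the same map, so the semigroup is not free.
\item If $\alpha=1$, both maps are translations, hence they commute and the semigroup is not free.
\item If $\alpha$ is a root of unity, say $\alpha^k=1$, then $f^k$ is a translation and commutes with the translation $g$. The words $f^k\circ g$ and $g\circ f^k$ are distinct but equal as maps.
\end{enumerate}
From now on I assume $\gamma\neq 0$ and that $\alpha$ is algebraic but not a root of unity.

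Next I normalize. Conjugating by the translation $x\mapsto x-\beta/(1-\alpha)$ makes $f(x)=\alpha x$ and keeps $g$ a translation by $\gamma$. Conjugating further by the scaling $x\mapsto x/\gamma$ gives $f(x)=\alpha x$ and $g(x)=x+1$.

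The key computation is the following. For a tuple of nonnegative integers $(c_0,\dots,c_D)$ I consider the word
\[
W=g^{c_0}\circ f\circ g^{c_1}\circ f\circ\cdots\circ f\circ g^{c_D},
\]
where $g^0$ is read as the empty word. An easy induction shows
\[
W(x)=\alpha^{D}x+\sum_{i=0}^{D}c_i\alpha^{D-i}.
\]
Hence $W(x)=\alpha^D x+P(\alpha)$, where $P(t)=\sum_i c_i t^{D-i}\in\Z_{\ge0}[t]$. Distinct coefficient tuples with the same $D$ give distinct words in the free semigroup, because the exponent pattern of $g$ between successive $f$'s is read off from the word.

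Now I build the relation. Let $m(t)\in\Z[t]$ be a nonzero polynomial with $m(\alpha)=0$; for instance, take the minimal polynomial of $\alpha$ and clear denominators. Write $m=m^+-m^-$, where $m^\pm\in\Z_{\ge0}[t]$ have disjoint supports. Then $m^+\neq m^-$, and both have degree at most $D:=\deg m$. Padding with zero coefficients, let $W^\pm$ be the words attached to $m^\pm$ with this common $D$. Then
\[
W^+(x)=\alpha^Dx+m^+(\alpha)=\alpha^Dx+m^-(\alpha)=W^-(x).
\]
Since the coefficient tuples differ, $W^+\neq W^-$ as words, so the semigroup is not free.

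The main point needing care is that the construction genuinely produces two distinct nonempty words of the required shape, consistent with the paper's definition of compositional independence, which uses positive exponents. Both words contain $f$ exactly $D\ge1$ times, so they are nonempty. Blocks with $c_i=0$ simply merge adjacent $f$'s into higher powers $f^{j}$. The resulting words in the form $f^{i_1}g^{j_1}\cdots$ are still different, because their patterns of $g$-exponents differ. If one insists that each word begin with $f$ and end with $g$, as in the definition, I can pre- and post-compose both words with $f\circ g$; this preserves both the equality and the distinctness. Everything else is a routine verification.
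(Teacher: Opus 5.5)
Your strategy is the same as the paper's: split an integer polynomial vanishing at $\alpha$ as $m=m^+-m^-$ with nonnegative coefficients, and encode those coefficients as the $g$-exponents between $D$ copies of $f$. However, the key composition formula has the exponents reversed, and as written the claimed relation is false. In $W=g^{c_0}\circ f\circ g^{c_1}\circ\cdots\circ f\circ g^{c_D}$, the block $g^{c_i}$ has exactly $i$ copies of $f$ to its left (applied after it), so
\[
W(x)=\alpha^{D}x+\sum_{i=0}^{D}c_i\alpha^{i}.
\]
For $D=1$ this reads $g^{c_0}\circ f\circ g^{c_1}(x)=\alpha x+c_0+c_1\alpha$. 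Your rule takes $c_i$ to be the coefficient of $t^{D-i}$ in $m^{\pm}$. With that rule the two words actually equal $\alpha^D x+\alpha^D m^{\pm}(\alpha^{-1})$, and these agree only when $m(\alpha^{-1})=0$. For instance, with $\alpha=2$ and $m=t-2$, your words are $W^+=g\circ f$ and $W^-=f\circ g^{2}$. These are $2x+1$ and $2x+4$, so $W^+\neq W^-$ as maps.

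The repair is immediate: take $c_i$ to be the coefficient of $t^{i}$ in $m^{\pm}$. In the example this gives $f\circ g=g^{2}\circ f$, both equal to $2x+2$. This is exactly the paper's choice $g^{a_0}\circ f\circ g^{a_1}\circ\cdots\circ f\circ g^{a_d}$ for $h=\sum a_ix^i-\sum b_ix^i$. Alternatively, keep your indexing and apply it to the reciprocal polynomial $t^{D}m(1/t)$, which vanishes at $1/\alpha$.

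With this correction the rest of your argument is sound. The paper skips the normalization and the degenerate cases. It can do so because each word contains $d$ copies of $f$, so the $\beta$-contribution $\beta(1+\alpha+\cdots+\alpha^{d-1})$ is identical for both words; conjugating to $f=\alpha x$, $g=x+1$ is therefore unnecessary. Your final adjustment, making both words begin with $f$ and end with $g$ to match the definition of compositional independence, is a detail the paper omits.
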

\begin{proof}
    Since $\alpha \in \overline{\Q}^*$, there exists a minimal polynomial $h(x) \in \Z[x]$ such that $h(\alpha) = 0$. Let us group the terms of $h(x)$ according to the sign of its coefficients and we get
    $$ h(x) = \sum^d_{i = 0} a_ix^i - \sum^d_{i = 0}b_i x^i$$
    where $a_i, b_i \in \N$.
    Then let us consider two composition sequences 
    $$ g^{a_0} \circ f \circ g^{a_1} \circ f \circ g^{a_2} \circ \dots \circ f \circ g^{a_d} = \alpha^d x + \frac{1 - \alpha^d}{1 - \alpha} \beta + \sum^d_{i = 0}a_i\alpha^i \gamma$$
    and 
    $$ g^{b_0} \circ f \circ g^{b_1} \circ f \circ g^{b_2} \circ  \dots \circ f \circ g^{b_d} = \alpha^d x + \frac{1 - \alpha^d}{1 - \alpha}
    \beta + \sum^d_{ i =0} b_i \alpha^i \gamma.$$

    Since $h(\alpha) = 0$, we have 
    $$g^{a_0} \circ f \circ g^{a_1} \circ f \circ g^{a_2} \circ \dots \circ f \circ g^{a_d} = g^{b_0} \circ f \circ g^{b_1} \circ f \circ g^{b_2} \circ  \dots \circ f \circ g^{b_d},  $$
    which implies that the semigroup generated by $f$ and $g$ are not free since we assumed that $\{a_0, \dots, a_d\}$ and $\{b_0, \dots, b_d\}$ are distinct sequences of non-negative integers.
\end{proof}

\begin{thm}\label{thm: f-g-m-n-linear-finite}
    Let $f$, $g$ and $c$ be polynomials over $\C$ and $f = \alpha x + \beta$ and $g = \delta x + \gamma$ are automorphisms with $\alpha, \beta \in \C^*$. Suppose $c(x)$ is not an iterate of $f$ or $g$. Suppose $f$ and $g$ generate a free semigroup under composition. Then there are only finitely many $\lambda \in \C$ such that  
    $$ f^m(\lambda) = g^n(\lambda) = c(\lambda)$$
    for some $m,n \in \N^+$.
\end{thm}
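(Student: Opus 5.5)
We argue by contradiction: assume there are infinitely many $\lambda \in \C$ with $f^m(\lambda) = g^n(\lambda) = c(\lambda)$ for some $m,n \in \N^+$. We then split according to whether $\alpha$ and $\delta$ equal $1$, and in each case feed the earlier classification results into a contradiction with freeness. (We read the hypothesis ``$\alpha,\beta\in\C^*$'' as making $f,g$ genuine automorphisms; the relevant splitting parameters are $\alpha$ and $\delta$.)

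First we dispose of degeneracies. Since $c$ is not an iterate of $f$ or $g$, the pairs $\{f,c\}$ and $\{g,c\}$ are non-degenerate in the sense of this section. If $\alpha$ or $\delta$ is a root of unity other than $1$, the argument at the start of Proposition~\ref{prop: poly-m-n-not-root-case} applies. It forces a single iterate to agree with $c$ at infinitely many points, hence $c = f^{m_0}$ or $c=g^{n_0}$, a contradiction. Next, if $\alpha = \delta = 1$, then $f$ and $g$ are both translations. They commute, so $f\circ g = g\circ f$ and the semigroup is not free, contradicting the hypothesis. This disposes of the translation–translation case that the classification excluded.

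When $\alpha,\delta \neq 1$, we invoke Corollary~\ref{cor: mul-algebraic-case} directly, with $c_1=c_2=c$ a polynomial. The infinitude of $\lambda$ forces $f$ and $g$ not to generate a free semigroup, a contradiction.

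It remains to treat exactly one of $\alpha,\delta$ equal to $1$; by symmetry take $\delta=1$, $\alpha\ne 1$, so $g(x)=x+\gamma$ with $\gamma\neq0$ (else $g=\mathrm{id}$, which is not free with $f$). If $\alpha$ is transcendental, Proposition~\ref{prop: add-mul-trance-case} gives only finitely many $\lambda$, a contradiction. If $\alpha\in\overline{\Q}^*$, Proposition~\ref{prop: non-trance-add-mul-nonfree} shows that $f,g$ do not generate a free semigroup, again a contradiction. The symmetric case $\alpha=1$ is handled identically, with the roles of $f$ and $g$ exchanged in these propositions; this is harmless, since the conditions are symmetric in $m,n$.

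The only real care point is the bookkeeping. We must check that every configuration of $(\alpha,\delta)$ is covered: root of unity, equal to $1$, non-root algebraic, or transcendental. We must also check that the non-degeneracy hypotheses of the invoked propositions genuinely follow from ``$c$ is not an iterate''. In particular, Corollary~\ref{cor: mul-algebraic-case} requires non-degeneracy exactly in this sense, and it does not need $\alpha,\delta$ to be non-roots of unity beyond what was already excluded. So no serious obstacle is expected.
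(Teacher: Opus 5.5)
Your proposal is correct and follows the paper's proof essentially verbatim. It uses the same case split: $\alpha,\delta\neq 1$ via Corollary~\ref{cor: mul-algebraic-case}; exactly one of them equal to $1$ via Proposition~\ref{prop: add-mul-trance-case} when $\alpha$ is transcendental and Proposition~\ref{prop: non-trance-add-mul-nonfree} when it is algebraic; and both equal to $1$ giving commuting translations. Your extra checks for roots of unity other than $1$ and for $\gamma=0$ are harmless but redundant, since the non-degeneracy hypothesis inside the cited results already covers those situations.
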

\begin{proof}
    We first suppose that both $\alpha$ and $\delta$ are not $1$. On the contrary that, if there are infinitely many $\lambda \in \C$ such that 
    $$ f^m(x) = g^n(x) = c(x)$$ for some $m,n \in \N^+$,
    
    then by Corollary \ref{cor: mul-algebraic-case}, we have that $f, g$ won't generate a free semigroup under composition, which is a contradiction.

    Now, let's suppose one of $\delta$ and $\alpha$ is $1$. Without loss of generality, we assume that it is $\delta$. If $\alpha \notin \overline{\Q}$, we have by Proposition \ref{prop: add-mul-trance-case} that there are only finitely many $\lambda \in \C$ such that $f^n(\lambda) = g^m(\lambda) = c(\lambda)$ for some $m,n \in \N^+$. Suppose $\alpha \in \overline{\Q}$, by Proposition \ref{prop: non-trance-add-mul-nonfree}, we know that $f$ and $g$ won't generate a free semigroup under composition. 

    Lastly, if both $\alpha, \delta = 1$, then obviously that $f$ and $g$ commute to each other and don't generate a free semigroup. 
    
\end{proof}

With Theorem \ref{thm: f-g-m-n-linear-finite}, we can quickly obtain the proof of Theorem \ref{thm: f^m-g^n-c-poly}.
\begin{proof}[Proof of Theorem \ref{thm: f^m-g^n-c-poly}]
    The case when at least one of $f$ and $g$ is of degree greater than $1$ is handled by \cite[Theorem 1]{HT17}. The rest of the case that $f$ and $g$ are both automorphisms is proved by Theorem \ref{thm: f-g-m-n-linear-finite}. Thus, the theorem is proved.
\end{proof}

Below, using our new approach, we present an alternative proof of \cite[Theorem 1.3]{NZ25} (Theorem \ref{thm: mainaut}). For the convenience of the reader, we repeat the statement here.

\begin{thm}
    Let $f(x)$, $g(x)$ be automorphisms on $\P^1$ defined over $\C$ and $c(x)$ be a rational function defined over $\C$. If the semigroup generated by $f(x)$ and $g(x)$ under compositions is free, then there are only finitely many $\lambda \in \C$ such that 
    \begin{equation} \label{eq: targeteq1}
        c(\lambda) = f^n(\lambda) = g^n(\lambda)
    \end{equation}
    for some positive integer $n$ unless $f(x)$, $g(x)$ are simultaneously conjugated by an automorphism on $\P^1(\C)$ to one of the following:
    \begin{enumerate}
        \item $\alpha x + \beta$, $x/(\gamma x + \delta)$, with some $\alpha, \delta, \gamma, \beta \in \C^*$ such that one of $\alpha /\delta $ and $\alpha \delta$ is a root of unity;
        \item $\alpha x + \beta$, $\delta x + \gamma$,  with some $\alpha, \delta \in \C^*$ such that $\alpha$ and $\delta$ are not roots of unity, $\gamma$ and $\beta$ are not both $0$, $\alpha/\delta$ is a root of unity other than $1$ or one of $\alpha^2/\delta$ and $\alpha/
        \delta^2$ is a root of unity.
    \end{enumerate}
    
\end{thm}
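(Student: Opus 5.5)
We will reduce to the classification propositions of the previous subsection with $c_1=c_2=c$ and $m=n$, using Remarks~\ref{rmk: n=m-no-degen} and \ref{rmk: n=m-nondeg-2} to drop the non-degeneracy hypothesis.

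First normalize. Up to simultaneous conjugation, either $f(x)=\alpha x+\beta$, $g(x)=\delta x+\gamma$ (common fixed point), or $f(x)=\alpha x+\beta$, $g(x)=x/(\gamma x+\delta)$ (no common fixed point, so $\beta,\gamma\neq 0$ and the fixed points $\infty$ and $0$ are distinct). Freeness excludes several situations at once:
\begin{itemize}
\item $f$ or $g$ of finite order;
\item $f,g$ both translations, which commute;
\item $f,g$ sharing both fixed points, which commute;
\item $\alpha,\delta$ both algebraic with one of them equal to $1$, by Proposition~\ref{prop: non-trance-add-mul-nonfree}.
\end{itemize}
In the remaining cases with one multiplier equal to $1$ and the other transcendental, or with $\alpha/\delta$ not a root of unity and a multiplier equal to $1$, Propositions~\ref{prop: add-mul-trance-case}, \ref{prop: rational-trans-multi-nocommon} and \ref{prop: rational-add-mul-trance-case} together with Remark~\ref{rmk: n=m-nondeg-2} give finiteness directly. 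So the main case is $\alpha,\delta$ not $1$ and not roots of unity.

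Suppose there are infinitely many $\lambda$. If for a single $n$ infinitely many $\lambda$ work, then $f^n=g^n=c$. This forces $f^n=g^n$, hence $f$ and $g$ commute with $f^n$, which contradicts freeness, since $f^n\circ g=g\circ f^n$ is a nontrivial relation. Hence infinitely many $n$ occur.

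In the no-common-fixed-point case, when $\alpha/\delta$ and $\alpha\delta$ are not roots of unity, we would like to apply Proposition~\ref{prop: non-poly-m-n-root} via Remark~\ref{rmk: n=m-no-degen}. Its curve argument, with $m=n$ forced, gives a curve in $\G_m^2$ containing infinitely many points $(\alpha^n,\delta^n)$. By Mordell--Lang such a curve is a translate of a subtorus $y^p=\mu' x^q$, so $\alpha^{nq}=\mu\delta^{np}$ for infinitely many $n$. Taking differences yields $\alpha^q=\delta^p$ up to a root of unity, with $(p,q)$ coprime.

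We then use the explicit shapes of $c$ from the two descriptions in Proposition~\ref{prop: non-poly-m-n-root}, which must agree since $c_1=c_2=c$. Comparing poles and zeros of $F$ at the fixed points $\beta/(1-\alpha)$, $\infty$, $0$ and $(1-\delta)/\gamma$ should force $|p|=|q|=1$. That leaves $\alpha/\delta$ or $\alpha\delta$ a root of unity, which is exceptional case (1).

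In the common-fixed-point case we use Proposition~\ref{prop: poly-m-n-not-root-case} in the same way. The identity
\[
\mu F^p(x-a)+a=F^q(x-b)+b,\qquad a=\beta/(1-\alpha),\ b=\gamma/(1-\delta),\ a\neq b,
\]
is a rational function identity. A valuation comparison at $x=a$, $x=b$, $x=\infty$ and at the zeros and poles of $F$ should restrict $(p,q)$ to $\{(1,1),(1,2),(2,1),(\pm1,\mp1)\}$ up to sign. Combined with $\alpha^q\delta^{-p}$ being a root of unity, this gives either $\alpha/\delta$ a root of unity, $\alpha^2/\delta$ or $\alpha/\delta^2$ a root of unity, or $\alpha\delta$ a root of unity.

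Two subcases then need to be disposed of with freeness. The case $\alpha/\delta=1$ exactly should be shown to give commuting maps, as in the argument of Corollary~\ref{cor: mul-algebraic-case}. The case $\alpha\delta$ a root of unity should contradict the sign analysis. What survives is exceptional case (2). The condition that $\beta,\gamma$ are not both $0$ is automatic from the distinctness of fixed points.

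The main obstacle is the case analysis showing that the functional equation between the two parametrizations forces $|p|,|q|\le 2$ with exactly the listed ratios. I would attack it by counting orders of vanishing of both sides minus $a$, respectively minus $b$, at each zero or pole of $F$ and at $a$, $b$, $\infty$, mirroring the sign-splitting in Corollary~\ref{cor: mul-algebraic-case} but without the polynomial hypothesis on $c$.
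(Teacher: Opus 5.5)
Your outline is the paper's route. You use the two normal forms, apply the classification propositions with $c_1=c_2=c$ and $m=n$ via Remarks~\ref{rmk: n=m-no-degen} and~\ref{rmk: n=m-nondeg-2}, get the Mordell--Lang identity between the two parametrized shapes of $c$, analyse its orders of vanishing, and use freeness to discard what survives. Your direct treatment of the degenerate situation (one $n$ with infinitely many $\lambda$ gives $f^n=g^n$, hence $g\circ f^n=f^n\circ g$) is a clean substitute for the paper's list of exceptions.

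However, the step you call the main obstacle is essentially the whole proof, and you only state its expected outcome; as written, this is a plan rather than a proof. The paper carries it out as follows in Case I. Write $F=A/B$ in lowest terms and normalize $|p|\ge|q|$ (swap $f,g$, or replace $F$ by $1/F$).
\begin{itemize}
\item If $p<0<q$, divide by $A^q$. At a root of $A$, the term $\mu B^{-p}A^{p-q}(x-a)$ has a pole of order exceeding that of every other term unless $p=-1$. If $A$ is constant, the two sides have different degrees. Hence $(p,q)=(-1,1)$.
\item If $p\ge q>0$ and $p>2$, clear denominators to $\mu A^p(x-a)+aB^p=A^qB^{p-q}(x-b)+bB^p$. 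Degrees force $B$ nonconstant, and reducing modulo a root of $B$ forces that root to be $a$. The order at $a$ then forces $B=\kappa(x-a)$ and $q=p-1$. Finally, reducing modulo a root of $A$ gives $a=b$, while a constant $A$ gives a degree clash ($\le 1$ versus $p-1\ge 2$).
\end{itemize}
Case II is a longer analysis of the same kind, tracking the factors $x-a$, $x$ and $1-\gamma x/(1-\delta)$. It ends with $p=q=1$ or $q=-p=1$.

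There is also a concrete error: the case $\alpha\delta$ a root of unity does \emph{not} contradict the sign analysis. Take $(p,q)=(-1,1)$, $\mu=1$ and $F=-(x-a)/(x-b)$; both sides of the identity then equal $a+b-x$. This really occurs: for $\delta=\alpha^{-1}$ with $\alpha$ not a root of unity, $a\neq b$, and $c(x)=a+b-x$, the distinct points $\lambda_n=(b+a\alpha^n)/(1+\alpha^n)$ satisfy $f^n(\lambda_n)=g^n(\lambda_n)=c(\lambda_n)$ for every $n$. Yet $(f,g)$ is not in exceptional family~(2). This case can only be removed by freeness, which is how the paper handles it: $f\circ g$ has multiplier $\alpha\delta$, so either $f\circ g$ has finite order, or $f\circ g$ and $g\circ f$ are commuting translations.

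Finally, your bullet that freeness excludes ``$\alpha,\delta$ algebraic with one equal to $1$'' via Proposition~\ref{prop: non-trance-add-mul-nonfree} is valid only in the common-fixed-point normal form. For $f=x+\beta$, $g=x/(\gamma x+\delta)$ the semigroup can be free. Finiteness there must come from Proposition~\ref{prop: rational-trans-multi-nocommon} together with Remark~\ref{rmk: n=m-nondeg-2}, which you do invoke afterwards.
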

\begin{proof}
    Note that if $f$ and $g$ share more than $1$ common fixed points, then they are commuting to each other. So we assume that they are sharing at most one common fixed point. We will show that if we assume the two exceptional conditions doesn't hold for our $f$ and $g$, then there are only finitely many $\lambda$ making Equation (\ref{eq: targeteq1}) hold for some $n \in \N^+$.

    {\bf Case \RNum{1}:} Suppose first that they share exactly one common fixed point. Then, our assumption implies that after a suitable conjugation by an automorphism on $\P^1$, we have 
    $$ f(x) = \alpha x + \beta$$
    $$ g(x) = \delta x + \gamma$$
    where $\alpha, \delta \in \C^*$ and $\gamma, \beta \in \C$.\\

    Although we did not assume that the pairs \( \{f,c\} \) and \( \{g,c\} \) are non-degenerate here, 
Remarks~\ref{rmk: n=m-no-degen} and~\ref{rmk: n=m-nondeg-2} indicate that the propositions in this section 
remain applicable provided that the two exceptional cases (1) and (2) do not occur, except possibly in the following three cases:
\begin{itemize}
    \item $\alpha = 1$ and $\beta = 0$ or $\delta = 1$ and $\gamma = 0$;
    \item one of $\alpha$ and $\delta$ is a root of unity other than $1$;
    \item $\alpha = \delta $.
\end{itemize}
Firstly, if $\alpha = 1$ and $\beta = 0$, then $f(x) = x$ which contradicts that $f$ and $g$ generate a free semigroup under compositions. The same argument also rules out the case of $\delta = 1$ and $\gamma = 0$. While, when one of $\alpha$ and $\delta$ is a root of unity other than $1$, without loss of generality we assume that it is $\alpha$. Then there exists a $m \in \N^+$ so that $f^m(x) = x$, which contradicts that $f$ and $g$ generate a free semigroup under compositions.  
Thus, Propositions~\ref{prop: poly-m-n-not-root-case} and~\ref{prop: add-mul-trance-case} apply in the present setting. Now, when \( \alpha = \delta  \) and none of them are roots of unity, if there are infinitely many $\lambda \in \C$ such that 
 $$ \alpha^n\lambda + \frac{1- \alpha^n}{1- \alpha}\beta = f^n(\lambda) = g^n(\lambda) = \alpha^n \lambda + \frac{1 - \alpha^n}{1 - \alpha}\gamma$$
 for some $n \in \N^+$, then we have
 $\beta = \gamma$ and hence $f =g$ contradicting the assumption that $f$ and $g$ generate a free semigroup. In the case that $\alpha = \delta = 1$, we have obviously that $f \circ g= g \circ f$ contradicts that $f$ and $g$ generate a free semigroup under composition.
\\

    Suppose at least one of $\alpha$ and $\delta$ is $1$. First of all, if $\alpha = \delta = 1$, as we discussed above, then $f$ and $g$ commute, which is also a contradiction. Suppose exactly one of them is $1$, without loss of generality, say it is $\delta =1$, and $\gamma \neq 0$ (the case $\gamma  =0$ is discussed above), then Proposition \ref{prop: add-mul-trance-case} and Proposition \ref{prop: non-trance-add-mul-nonfree} imply that either $f$ and $g$ won't generate a free semigroup or there are only finitely many $\lambda$ making Equation (\ref{eq: targeteq1}) hold for some $n \in \N^+$.

    Now, we suppose that $\alpha, \delta $ are not roots of unity. Suppose there are infinitely many $\lambda$ such that Equation \ref{eq: targeteq1} hold for some $n \in \N^+$. Then Proposition \ref{prop: poly-m-n-not-root-case} implies that there exist a root of unity $\mu$ and a pair of coprime non-zero integers $p$, $q$ such that 
    $$ \mu^q = \alpha^{nq}/\delta^{np}$$ for infinitely many $n \in \N^+$ and a pair of coprime polynomials $A(x), B(x) \in \C[x]$ such that 
    \begin{equation} \label{eq: poly-n-noroot-1}
        \mu (A(x)/B(x))^p (x - \beta/(1 - \alpha)) + \beta/(1 - \alpha) = (A(x)/B(x))^q(x - \gamma/(1 - \delta)) + \gamma/(1- \delta).
    \end{equation} 

    Without loss of generality, we can always assume that $|p| \geq  |q|$. Suppose that exactly one of $p, q$
 is negative, without loss of generality, let us assume $p < 0$. Note that by interchanging the roles of $f$  and $g$ and replacing $(p,q)$ by $(-p, -q)$, we can always make $|p|\geq |q|$ and $p < 0$ hold. Then Equation \ref{eq: poly-n-noroot-1} implies
 \begin{equation}
     \mu B(x)^{-p }A(x)^{p-q}\left(x - \frac{\beta}{1 -\alpha}\right) + \frac{\beta}{1 - \alpha} A(x)^{-q} = B(x)^{- q}\left(x - \frac{\gamma}{1 - \delta}\right) + \frac{\gamma}{ 1- \delta}A(x)^{-q}
 \end{equation}
 hold. However, since $A(x)$ and $B(x)$ are coprime in $\C[x]$, there exists a linear factor $x - \nu$ dividing only one of $A(x)$ and $B(x)$. Suppose it divides $A(x)$, then if $p < -1$ then we have
\begin{align*}
    \val_{x - \nu}\left(\mu B(x)^{-p }A(x)^{p-q} \left(x - \frac{\beta}{1 -\alpha}\right) + \frac{\beta}{1 - \alpha} A(x)^{-q}\right) \\< \val_{x-\nu} \left(B(x)^{- q}\left(x - \frac{\gamma}{1 - \delta}\right) + \frac{\gamma}{ 1- \delta}A(x)^{-q}\right)
\end{align*} 
 since $-p + q > \max\{q, -p\}$.
 This contradicts Equation \ref{eq: poly-n-noroot-1}. If $p = -1$, then $q = 1$ and we have $\mu = \alpha^n\delta^n$ for infinitely many $n \in \N^+$, which implies that $\alpha \delta$ is a root of unity. Then $f \circ g$ and $g \circ f$ commute to each other, contradicting the assumption that $f$ and $g$ generate a free semigroup under composition.

 On the other hand, if there does not exist such a linear factor $x- \nu $ dividing $A(x)$, then $A(x)$ is a constant and $B(x)$ is not. We then have Equation (\ref{eq: poly-n-noroot-1}) becomes
 \begin{equation}
     \mu B(x)^{-p}A^{p-q} \left(x - \frac{\beta}{1- \alpha}\right) + \frac{\beta}{1- \alpha} A^{-q} = B(x)^{-q}\left(x - \frac{\gamma}{1- \delta}\right) + \frac{\gamma}{1- \delta}A^{-q}. 
 \end{equation}
 Then the degree of left hand side is strictly larger than the degree of the right hand side, which is a contradiction.

 Now, we suppose both $p$ and $q$ are positive.  When $p \leq 2$, we have 
 $$ \alpha^q/\delta^p = \mu^q$$
 is a root of unity, which satisfies the exceptional condition (2) except when $p =q =1$ and $\mu = 1$. When $p = q =1$ and $\mu =1$, we have $\alpha/\delta = 1$. In this case, if there are infinitely many $\lambda \in \C$ such that 
 $$ \alpha^n\lambda + \frac{1- \alpha^n}{1- \alpha}\beta = f^n(\lambda) = g^n(\lambda) = \alpha^n \lambda + \frac{1 - \alpha^n}{1 - \alpha}\gamma$$
 for some $n \in \N^+$, then we have
 $\beta = \gamma$ and hence $f =g$ contradicts the assumption that $f$ and $g$ generate a free semigroup.
 
 Now, we assume $p > 2$. Also, if $p = q$ then by the coprime assumption of $p$ and $q$, we have $p =q =1$. So, we have $p > 2$ and $p > q$ under the assumption of $p > 2$. Then Equation \ref{eq: poly-n-noroot-1}
 implies that 
 \begin{equation}\label{eq: poly-non-root-2}
     \mu A(x)^p \left(x - \frac{\beta}{1 - \alpha}\right) + B(x)^p\frac{\beta}{1 - \alpha} = A(x)^qB(x)^{p-q}\left(x - \frac{\gamma}{1 - \delta}\right) + \frac{\gamma}{1 - \delta}B(x)^p.
 \end{equation}
 Then $B(x)$ cannot be a constant due to the degree reason. But note that then there exists a $\nu \in \C$ such that $x - \nu$ divides $B(x)$ but not $A(x)$, so we have Equation (\ref{eq: poly-non-root-2}) implies that 
 $$x - \nu ~|~  \mu A(x)^p\left(x - \frac{\beta}{1 - \alpha}\right).$$
 Thus $\nu = \beta/(1 - \alpha)$. Now, Equation (\ref{eq: poly-non-root-2})
 implies that 
 \begin{align}\label{eq: poly-noroot-3}
     A(x)^qB'(x)^{p-q}\left(x - \frac{\gamma}{1 - \delta}\right)\left(x - \frac{\beta}{1 - \alpha}\right)^{p-q-1} + \frac{\gamma}{1 - \delta}B'(x)^p\left(x - \frac{\beta}{1 - \alpha}\right)^{p-1}\nonumber\\ = \mu A(x)^p + B'(x)^p\left(x - \frac{\beta}{1 - \alpha}\right)^{p-1}\frac{\beta}{1-  \alpha},
 \end{align}
 where $B(x) = (x -\beta/(1 - \alpha))B'(x)$.

 Note that if $\deg(B'(x)) > 0$, then there exists another $\nu_1 \in \C$ such that $x - \nu_1$ divides $B'(x)$ but not $A(x)$ and contradicts Equation (\ref{eq: poly-noroot-3}). 

 Now, assume $\deg(B'(x)) = 0$. If $\deg(A(x)) > 0$, then, similarly, there exists a linear factor $x - \nu_2$ dividing $A(x)$ but not $B(x)$, we have Equation (\ref{eq: poly-noroot-3}) implies that 
 $$\frac{\gamma}{1 - \delta}B'(x)^p\left(x - \frac{\beta}{1 - \alpha}\right)^{p-1} \equiv \frac{\beta}{1-  \alpha}B'(x)^p\left(x - \frac{\beta}{1 - \alpha}\right)^{p-1} \pmod{ x - \nu_2},$$
 which implies that 
 $$ \frac{\beta}{1 - \alpha} = \frac{\gamma}{1 - \delta}.$$
 But this implies $f$ and $g$ share two common fixed points, contradicting our assumption.

 Lastly, suppose $\deg(A(x)) = 0$, then since $p > 2$, the leading degree terms of Equation (\ref{eq: poly-noroot-3}) give again that 
 $$ \frac{\gamma}{1 - \delta}B'(x)^p\left(x - \frac{\gamma}{1 - \delta}\right)^{p-1} = \frac{\beta}{1-  \alpha}B'(x)^p\left(x - \frac{\beta}{1 - \alpha}\right)^{p-1},$$
 which implies 
 $$\frac{\beta}{1 - \alpha} = \frac{\gamma}{1 - \delta}. $$ Again, this is a contradiction. So, in this case, having infinitely many $\lambda$ making Equation (\ref{eq: targeteq1}) hold for some $n \in \N^+$ implies that $f$ and $g$ satisfy the exceptional condition (2).
 \newline

{\bf Case \RNum{2}:} Now, let us suppose $f(x)$ and $g(x)$ do not share a common fixed point. Then, under a suitable choice of coordinate, we have 
$$ f(x) = \alpha x + \beta$$
$$ g(x) = \frac{x}{\gamma\
x + \delta}$$
where $\gamma$, $\beta$, $\alpha$, $\delta \in \C^*$. \\

Similarly as in Case \RNum{1}, under the assumption that the two exceptional cases (1) and (2) do not occur, Remarks \ref{rmk: n=m-no-degen} and \ref{rmk: n=m-nondeg-2} indicates that we can apply Propositions \ref{prop: non-poly-m-n-root} and \ref{prop: rational-trans-multi-nocommon} without assuming $\{f, c\}$ and $\{g,c\}$ are non-degenerate, except when one of $\alpha$, $\beta$ is a root of unity other than $1$. If, without loss of generality, $\alpha$ is a root of unity other than $1$, then there exists a $m \in \N^+$ such that $f^m(x) = x$ which contradicts that $f$ and $g$ generate a free semigroup under compositions. So, we rule out this case and Propositions \ref{prop: non-poly-m-n-root} and \ref{prop: rational-trans-multi-nocommon} is applicable.  \\

Again, if one of $\alpha$ or $\delta$ is a root of unity other than $1$, then it contradicts the freeness assumption as $\{f^m : m\in \N^+\}$ or $\{g^n : n \in \N^+\}$ is a finite set. 

Suppose one of $\alpha$ and $\delta$ is $1$. Note that the case that they are both roots of unity is covered by the exceptional condition (1). We just need to show that if exactly one of them is $1$, then there are only finitely many $\lambda \in \C$ making the Equation (\ref{eq: targeteq1}) hold for some $n\in \N^+$. With a further change of coordinates, if necessary and interchange of the roles of $f$ and $g$, we can always assume that it is $\alpha = 1$. This case is covered by Proposition \ref{prop: rational-trans-multi-nocommon} which shows that there are only finitely many $\lambda \in \C$ satisfies the condition and we are done.

Now, suppose that neither $\alpha$ nor $ \delta$ is a root of unity. Then Proposition \ref{prop: non-poly-m-n-root} implies that having infinitely many $\lambda$ such that Equation (\ref{eq: targeteq1}) holds for some $n \in \N^+$ will result in 
\begin{equation}\label{eq: non-poly-case-1-contra}
    \left(x - \frac{\beta}{1 - \alpha}\right) \mu F(x)^p + \frac{\beta}{1 - \alpha} = x/\left( F(x)^q (1 - \frac{\gamma}{1 - \delta}x) + \frac{\gamma}{1 - \delta}x\right)
\end{equation}
where $p,q$ is a pair of coprime non-zero integers, $\mu^q = \alpha^{nq}/\delta^{np}$ for infinitely many $n \in \N$ and $F(x) = A(x)/B(x)$ for a pair of coprime polynomials $A(x), B(x) \in \C[x]$. Without loss of generality, we can always assume that $q > p$ by swapping the role of $f$ and $g$ if necessary. \\

{\bf Subcase (1):} Let us first suppose $p > 0$. Then Equation (\ref{eq: non-poly-case-1-contra}) is equivalent to 
\begin{align}\label{eq: non-poly-q>0-1}
    \left(x - \frac{\beta}{1- \alpha}\right)\left(1 - \frac{\gamma}{ 1- \delta}x\right)\mu A(x)^{p+q}+ \frac{\beta}{1 - \alpha}\left(1 - \frac{\gamma}{1 - \delta}x\right)A(x)^q B(x)^p \nonumber\\+ \frac{\gamma}{1 - \delta}\left(x - \frac{\beta}{1 - \alpha}\right)x\mu A(x)^p B(x)^q + \frac{\beta}{1 - \alpha}\frac{\gamma}{1 - \delta}x B(x)^{ p + q} = x B(x)^{p + q}.
\end{align}
We want to show that this cannot be true unless $p = q = 1$. We assume that $q > p \geq 1$ and try to get contradictions. Suppose $B(x)$ is a constant, then 
\begin{align}
    \deg\left(\left(x - \frac{\beta}{1- \alpha}\right)\left(1 - \frac{\gamma}{ 1- \delta}x\right)\mu A(x)^{p+q}\right) \nonumber\\> \deg\bigg(\frac{\beta}{1 - \alpha}\left(1 - \frac{\gamma}{1 - \delta}x\right)A(x)^q B(x)^p + \frac{\gamma}{1 - \delta}\left(x - \frac{\beta}{1 - \alpha}\right)x\mu A(x)^p B(x)^q \nonumber \\+ \frac{\beta}{1 - \alpha}\frac{\gamma}{1 - \delta}x B(x)^{ p + q} - x B(x)^{p + q}\bigg),
\end{align} 
since $q > p \geq 1$ and $\deg(A(x)) > 0$ as $F(x)$ is not a constant. This gives a contradiction.

Suppose $B(x)$ is not a constant but $A(x)$ is a constant. Then, Equation (\ref{eq: non-poly-q>0-1}) implies $\deg(B(x)) = 1$ and $p = 1$. Suppose $B(x) = \mu_b(x - \beta/(1- \alpha))$ and $A(x) = \mu_a$, where $\mu_b, \mu_a \in\C^*$. Then, Equation (\ref{eq: non-poly-q>0-1}) further implies, by grouping $(x - \beta/(1 - \alpha))$ terms, that
$$ \left(1 - \frac{\gamma}{1 - \delta}x\right)\mu \mu^{q+1}_a + \frac{\beta}{1 - \alpha} \left(1 - \frac{\gamma}{1 - \delta}x\right)\mu_a^q\mu_b \equiv 0 \pmod{x - \frac{\beta}{1 - \alpha}}.$$
Since $\beta/(1 - \alpha) \neq (1 - \delta)/\gamma$ as we assumed that $f$ and $g$ don't share a common fixed point, we must have 
\begin{equation}\label{eq: non-poly-q>0-2}
    \mu \mu_a + \frac{\beta}{1 - \alpha} \mu_b = 0.
\end{equation}
Then, Equation (\ref{eq: non-poly-q>0-1}) will further imply that 
\begin{equation}\label{eq: non-polu-q>0-3}
    \frac{\gamma}{1 - \delta} \mu \mu_a + \left(\frac{\gamma}{1 - \delta}\frac{\beta}{1 - \alpha} -1\right)\mu_b = 0,
\end{equation}
by looking at the rest of terms. Then Equation (\ref{eq: non-poly-q>0-2}) and (\ref{eq: non-polu-q>0-3}) together implies that $\mu_b = 0$, which is a contradiction. 

Then, suppose $B(x) = \mu_b (x - \gamma/(1 - \delta))$. Then, moding out $x - \gamma/(1 - \delta)$, Equation (\ref{eq: non-poly-q>0-1}) implies that 
$$ \left(x - \frac{\beta}{1 - \alpha}\right)\mu \mu_a^{q +1} = 0$$
which is impossible. 

Now, suppose $$B(x) = \mu_b (x - \nu)$$
where $\nu \in \C \setminus\left\{\frac{\beta}{1 - \alpha}, \frac{1 - \delta}{\gamma}\right\}$. Moding out $(x- \nu)$ in Equation (\ref{eq: non-poly-q>0-1}) gives that 
$$ \left(x - \frac{\gamma}{1- \delta}\right)\left(x - \frac{\beta}{1 - \alpha}\right)\mu \mu_a^{q +1} \equiv 0 \pmod{x - \nu},$$
which is only possible when $\mu\mu_a^{q+1} = 0$ contradicting our assumption.

Now, suppose both $B(x)$ and $A(x)$ are not constant. Then there exists a $\nu \in \C$ such that $x - \nu$ divides $B(x)$ but not $A(x)$. If $\nu \notin \{ \beta/(1 - \alpha), (1- \delta)/\gamma\}$, then, moding out $x - \nu$ in Equation (\ref{eq: non-poly-q>0-1}), we have that
$$ \mu A(x)^{p + q} \equiv 0 \pmod{x - \nu}$$
which contradicts our assumption that $x - \nu$ doesn't divide $A(x)$. 

Suppose $\nu = \beta /(1 - \alpha)$. Then Equation (\ref{eq: non-poly-q>0-1}) implies that 
\begin{align} \label{eq: non-poly-q>0-4}
   \left(1 - \frac{\gamma}{ 1- \delta}x\right)\mu A(x)^{p+q}+ \frac{\beta}{1 - \alpha}\left(1 - \frac{\gamma}{1 - \delta}x\right)A(x)^q B'(x)^p\left(x - \frac{\beta}{1 - \alpha}\right)^{p-1} \nonumber \\+ \frac{\gamma}{1 - \delta}x\mu A(x)^p B(x)^q + \left(\frac{\beta}{1 - \alpha}\frac{\gamma}{1 - \delta} -1 \right)x B'(x)^{ p + q}\left(x - \frac{\beta}{1- \alpha}\right)^{p + q - 1} =0,
\end{align}
where $B(x) = B'(x) (x -\beta/(1 - \alpha)).$ Then let $x - \nu'$ be a linear factor dividing $A(x)$ but not $B(x)$, we have Equation (\ref{eq: non-poly-q>0-4}) implies that 
$$\left(\frac{\beta}{1 - \alpha}\frac{\gamma}{1 - \delta} -1 \right)x B'(x)^{ p + q}\left(x - \frac{\beta}{1- \alpha}\right)^{p + q - 1} \equiv 0 \pmod{x - \nu'} ,$$
which is only possible when $\nu' = 0$ since $\beta/(1 - \alpha) \neq (1 - \delta)/\gamma$. However, if $\nu' = 0$, then Equation (\ref{eq: non-poly-q>0-4}) will imply that 
\begin{align}
     \left(1 - \frac{\gamma}{ 1- \delta}x\right)\mu x^{p+ q-1}A'(x)^{p+q}+ \frac{\beta}{1 - \alpha}\left(1 - \frac{\gamma}{1 - \delta}x\right)x^{p-1}A'(x)^q B'(x)^p\left(x - \frac{\beta}{1 - \alpha}\right)^{p-1} \nonumber \\+ \frac{\gamma}{1 - \delta}x^p\mu A'(x)^p B(x)^q + \left(\frac{\beta}{1 - \alpha}\frac{\gamma}{1 - \delta} -1 \right) B'(x)^{ p + q}\left(x - \frac{\beta}{1- \alpha}\right)^{p + q - 1} =0,
\end{align}
where $A(x) = xA'(x)$. Again, this implies that 
$$ \left(\frac{\beta}{1 - \alpha}\frac{\gamma}{1 - \delta} -1 \right) B'(x)^{ p + q}\left(x - \frac{\beta}{1- \alpha}\right)^{p + q - 1} \equiv 0 \pmod{x} $$
which is a contradiction.

Lastly, if $\nu = (1 - \delta)/\gamma$, then after dividing a $(x - \nu)$ on both side of Equation (\ref{eq: non-poly-q>0-4}), we get
$$
\left(x - \frac{\beta}{1 - \alpha}\right)\mu A(x)^{p + q} \equiv 0 \pmod{x - \nu },
$$
which is also impossible.\\

{\bf Subcase (2):} Now, let us suppose $p < 0$. By choosing $F(x)$ and swapping the role of $f$ and $g$ if necessary, we can assume that $q \geq -p$. We want to show that Equation (\ref{eq: non-poly-case-1-contra}) doesn't hold unless $q =  -p = 1$. We assume that $q > -p\geq 1$ and try to get a contradiction. Note that by clearing the denominator, we have Equation (\ref{eq: non-poly-case-1-contra}) is equivalent to 
\begin{align}\label{eq: non-poly-q<0-1}
    \left(x - \frac{\beta}{1 - \alpha}\right)\left(1 - \frac{\gamma}{1 - \delta}x\right)\mu A(x)^q B(x)^{-p } + \frac{\beta}{1 - \alpha}\left(1 - \frac{\gamma}{1 - \delta}x\right)A(x)^{q -p} \nonumber \\ + \frac{\gamma}{1 - \delta}x\left(x - \frac{\beta}{1 - \alpha}\right)\mu B(x)^{q -p} + \left(\frac{\gamma\beta}{(1- \alpha)(1 - \delta)} - 1\right)xA(x)^{-p}B(x)^q = 0.
\end{align}
Suppose $B(x)$ is not constant, then there exists a $\nu_1 \in \C$ such that $x - \nu$ divides $B(x)$ but not $A(x)$. Suppose $\nu\neq (1 - \delta)/\gamma$. Then, moding out $x  - \nu$, Equation (\ref{eq: non-poly-q<0-1}) implies that
$$ \frac{\beta}{1 - \alpha}\left(1 - \frac{\gamma}{1- \delta}x\right)A(x)^{q-p} \equiv 0 \pmod{x - \nu},$$
which is a contradiction. While, if $\nu =(1 - \delta)/\gamma$, then Equation (\ref{eq: non-poly-q<0-1}) implies 
\begin{align}
  \left(x - \frac{\beta}{1 - \alpha}\right)\mu A(x)^q B(x)^{-p } + \frac{\beta}{1 - \alpha}A(x)^{q -p} \nonumber  \nonumber\\+ \frac{\gamma}{1 - \delta}x\left(x - \frac{\beta}{1 - \alpha}\right)\mu B'(x)^{q -p}\left(1 - \frac{\gamma}{1- \delta}x\right)^{q-p-1} \\+ \left(\frac{\gamma\beta}{(1- \alpha)(1 - \delta)} - 1\right)xA(x)^{-p}B'(x)^q\left(1 - \frac{\gamma}{1 - \delta}x\right)^{q-1} = 0, 
\end{align}
which again gives contradiction after moding out $x - (1- \delta)/\gamma$, here $B(x) = B'(x)(1 - \gamma x/(1 - \delta))$.

Now, suppose $B(x) = \mu_b \in \C^*$ is a constant. Then since $F(x)$ is not a constant, we must have $A(x)$ is not a constant. Then there exists a $\nu_1 \in \C$ such that $x - \nu_1$ divides $A(x)$. If $\nu_1 \neq \beta/(1 - \alpha)$ or $p \neq -1$, then Equation (\ref{eq: non-poly-q<0-1}) gives a contradiction after moding out $x - \nu_1$. So, we assume that $\nu_1 = \beta/(1 - \alpha)$ and $p = -1$. 
We write $A(x) = A'(x) (x - \beta/(1 - \alpha))$. Then Equation (\ref{eq: non-poly-q<0-1}) implies that 
\begin{align}\label{eq: non-poly-q<0-2}
    \left(1 - \frac{\gamma}{1 - \delta}x\right)\mu A'(x)^q \mu_b\left(x - \frac{\beta}{1 - \alpha}\right)^p + \frac{\beta}{1 - \alpha}\left(1 - \frac{\gamma}{1 - \delta}x\right)A'(x)^{q +1}\left(x - \frac{\beta}{1 - \alpha}\right)^{q} \nonumber \\ + \frac{\gamma}{1 - \delta}x\mu \mu_b^{q +1} + \left(\frac{\gamma\beta}{(1- \alpha)(1 - \delta)} - 1\right)xA'(x)\mu_b^q = 0.
\end{align}
If there exists a $\nu_2 \in \C^* $ such that $x - \nu_2 $ divides $A'(x)$, then again, moding out $x - \nu_2$ in Equation (\ref{eq: non-poly-q<0-2}), we have 
$$ \frac{\gamma}{1 - \delta}\mu\mu_b^{q+1} = 0,$$
which is a contradiction.
If $x$ divides $A'(x)$, then after dividing both side of Equation (\ref{eq: non-poly-q<0-2}) by $x $, we again have that, after moding out $x$, 
$$\frac{\gamma}{1 - \delta}\mu\mu_b^{q+1} = 0,$$
which is a contradiction.

So, we may assume that $A'(x) = \mu_a \in \C^*$ is a constant. Then Equation (\ref{eq: non-poly-q<0-2}) implies that 
\begin{align}
     \left(1 - \frac{\gamma}{1 - \delta}x\right)\mu \mu_a^q \mu_b\left(x - \frac{\beta}{1 - \alpha}\right)^q + \frac{\beta}{1 - \alpha}\left(1 - \frac{\gamma}{1 - \delta}x\right)\mu_a^{q +1}\left(x - \frac{\beta}{1 - \alpha}\right)^{q} \nonumber \\ + \frac{\gamma}{1 - \delta}x\mu \mu_b^{q +1} + \left(\frac{\gamma\beta}{(1- \alpha)(1 - \delta)} - 1\right)x\mu_a\mu_b^q = 0.
\end{align}
This implies that 
$$ \mu\mu_b + \frac{\beta}{1 - \alpha}
\mu_a = 0,$$
$$ \frac{\gamma}{1 - \delta}\mu\mu_b + \left(\frac{\gamma\beta}{(1 - \alpha)(1 - \delta)} -1\right)\mu_a=0.$$
Together, we have $\mu_a = 0$ which is a contradiction.

\end{proof}

\subsection{Positive Characteristic Case}

Let the base field $K$ be an algebraically closed field of characteristic $p>0$.

\begin{lem}
Let $\alpha,\delta\in K\backslash\overline{\mathbb{F}_p}$ and let $C\subseteq\mathbb{G}_m^2$ be an irreducible closed curve.
\begin{enumerate}
\item
$C$ has an infinite intersection with the cyclic group $\{(\alpha^n,\delta^n)|\ n\in\mathbb{Z}\}$ if and only if there exists
\begin{enumerate}
\item
$e_1\in\mathbb{Z},e_2\in\mathbb{Z}\backslash\{0\}$, and $q$ which is a power of $p$,
\item
$\alpha',\delta'\in K$ such that $\alpha'^{q-1}=\alpha$ and $\delta'^{q-1}=\delta$, and
\item
an absolutely irreducible polynomial $F(x,y)\in\mathbb{F}_q[x,y]$,
\end{enumerate}
such that $F(\alpha'^{e_2},\delta'^{e_2})=0$ and $C$ is defined by the formula
$$
F\left(\frac{\alpha'^{e_2}\cdot x}{\alpha^{e_1}},\frac{\delta'^{e_2}\cdot y}{\delta^{e_1}}\right)=0.
$$

\item
$C$ has an infinite intersection with the abelian group $\{(\alpha^m,\delta^n)|\ m,n\in\mathbb{Z}\}$ if and only if there exists
\begin{enumerate}
\item
$(m_0,n_0)\in\mathbb{Z}^2,(m_1,n_1)\in\mathbb{Z}^2\backslash\{(0,0)\}$, and $q$ which is a power of $p$,
\item
$\alpha',\delta'\in K$ such that $\alpha'^{q-1}=\alpha$ and $\delta'^{q-1}=\delta$, and
\item
an absolutely irreducible polynomial $F(x,y)\in\mathbb{F}_q[x,y]$,
such that $F(\alpha'^{m_1},\delta'^{n_1})=0$ and $C$ is defined by the formula
$$
F\left(\frac{\alpha'^{m_1}\cdot x}{\alpha^{m_0}},\frac{\delta'^{n_1}\cdot y}{\delta^{n_0}}\right)=0.
$$
\end{enumerate}
\end{enumerate}
\end{lem}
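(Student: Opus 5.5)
The plan is to invoke the positive-characteristic Mordell--Lang theorem for semiabelian varieties in the form of Moosa--Scanlon (isotrivial case) and Ghioca's refinement for $\mathbb{G}_m^N$. Let $\Gamma$ be either the cyclic group $\{(\alpha^n,\delta^n)\}$ or the rank $\le 2$ group $\{(\alpha^m,\delta^n)\}$ in $\mathbb{G}_m^2$. These results say that $C\cap\Gamma$ is a finite union of sets of the form
\[
\gamma\cdot S(\gamma_1,\dots,\gamma_r;k)\cdot H,
\]
where $H\le\Gamma$ is a subgroup and $\gamma,\gamma_i\in\Gamma$ (or rather in its $p$-saturation). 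Here $S(\gamma_1,\dots,\gamma_r;k)=\{\gamma_1^{q^{n_1}}\cdots\gamma_r^{q^{n_r}}:n_i\ge0\}$ with $q=p^k$, and the $\gamma_i$ lie in a torus defined over $\mathbb{F}_q$ after a translation.

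Since $C$ is a curve and $C\cap\Gamma$ is infinite, one of these pieces is infinite. Its Zariski closure is then all of $C$, by irreducibility and $\dim C=1$. So $C$ is the closure of either a translate of a coset of an infinite subgroup (the classical case), or a translate of an $F$-orbit-type set $\{\gamma\,\gamma_1^{q^n}\}$ with $r=1$. Two $F$-orbits would already give a surface unless they are dependent.

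\textbf{Only if.} I will handle both shapes uniformly.
\begin{itemize}
\item \emph{Coset case.} $C=\gamma\cdot\overline{\langle\gamma_1\rangle}$ is a translate of a subtorus, cut out by a binomial $x^ay^b=c$. This is defined over $\mathbb{F}_p$ after translation, so it fits the stated shape with $F$ a binomial.
\item \emph{Frobenius case.} $C$ contains $\gamma\cdot\gamma_1^{q^n}$ for infinitely many $n$. Hence $\gamma^{-1}C$ is a curve $D$ containing infinitely many $\mathrm{Frob}_q$-iterates of the point $\gamma_1=(\alpha^{m_1},\delta^{n_1})$. A curve containing infinitely many Frobenius iterates of a point is $\mathrm{Frob}_q$-invariant up to enlarging $q$, and so is defined over $\mathbb{F}_q$; let $G\in\mathbb{F}_q[x,y]$ be its absolutely irreducible equation. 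The shape with $\alpha'$, where $\alpha'^{q-1}=\alpha$, arises when one normalizes so that the point lies on an $\mathbb{F}_q$-curve. Writing $\gamma_1^{q^n}=\gamma_1\cdot\gamma_1^{q^n-1}$ and $\gamma_1^{q-1}$-powers in terms of $(\alpha'^{m_1},\delta'^{n_1})^{q^n}$, one sees that $C$ is the image under the multiplicative translation $(x,y)\mapsto(\alpha'^{m_1}x/\alpha^{m_0},\ \delta'^{n_1}y/\delta^{n_0})$ of $V(F)$, with $F$ defined over $\mathbb{F}_q$ and passing through $(\alpha'^{m_1},\delta'^{n_1})$.
\end{itemize}
I expect the bookkeeping of which translate and which $(q-1)$-th roots to choose to be fiddly, but it is mechanical. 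In case (1) the constraint that $\Gamma$ is cyclic forces $m_1=n_1=e_2$ and $m_0=n_0=e_1$.

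\textbf{If.} Suppose $F\in\mathbb{F}_q[x,y]$ vanishes at $P=(\alpha'^{m_1},\delta'^{n_1})$. Then it vanishes at $P^{q^k}$ for all $k$, because Frobenius fixes the coefficients. Now
\[
\frac{\alpha'^{m_1}\cdot\alpha^{m_0}\alpha^{m_1(q^k-1)/(q-1)}}{\alpha^{m_0}}=\alpha'^{m_1}\cdot\alpha'^{m_1(q^k-1)}=\alpha'^{m_1q^k},
\]
so the points $(\alpha^{m_0+m_1(q^k-1)/(q-1)},\ \delta^{n_0+n_1(q^k-1)/(q-1)})$ all lie on $C$. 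These points lie in $\Gamma$, and are in the cyclic group in case (1). They are pairwise distinct because $\alpha,\delta$ are not roots of unity ($\alpha,\delta\notin\overline{\mathbb{F}_p}$) and $(m_1,n_1)\ne0$. So $C\cap\Gamma$ is infinite.

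\textbf{Main obstacle.} The delicate step is extracting the exact normal form in the only-if direction from the Moosa--Scanlon/Ghioca structure theorem. That is, I must show that the infinite piece is a single Frobenius orbit (not a sum of several, which would be two-dimensional), and then identify the translating point correctly. For the first part I will argue by dimension: the closure of $\{\gamma_1^{q^{a}}\gamma_2^{q^{b}}\}$ is two-dimensional unless $\gamma_1,\gamma_2$ are multiplicatively dependent over the $p$-saturation.
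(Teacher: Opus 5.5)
Your proposal follows the paper's route. Both apply Moosa--Scanlon's Theorem B to get $q$ and integers $e_1,e_2$ (resp.\ $(m_0,n_0),(m_1,n_1)$) with $\{e_1+e_2\frac{q^m-1}{q-1}\}\cdot(\alpha,\delta)\subseteq C$ for \emph{every} $m\ge 0$, translate by the $(q-1)$-th roots $\alpha',\delta'$ so that $C'$ contains the full $[q]$-orbit of $(\alpha'^{e_2},\delta'^{e_2})$, deduce $[q](C')=C'$ and hence that $C'$ is defined over $\mathbb{F}_q$, and prove the converse by exhibiting exactly the points you wrote down.

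Three parts of your plan can be simplified. The separate coset case and the dimension argument about sums of several orbits are unnecessary, since any infinite piece, including an infinite coset $a+H$, already contains a single set $\{a+\frac{q^m-1}{q-1}b : m\ge 0\}$. Also use that the whole orbit, not just infinitely many iterates, lies on the curve: that is what makes $[q](C')=C'$ immediate, whereas your general claim about infinitely many iterates is not justified as stated.
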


\begin{proof}
\begin{enumerate}
\item
Firstly, suppose the intersection is infinite. Then according to \cite[Theorem B]{MS04}, there exist $q$ and $e_1,e_2$ satisfying condition (a) such that $$\left\{e_1+e_2\frac{q^m-1}{q-1}|\ m\in\mathbb{N}\right\}\cdot(\alpha,\delta)\subseteq C.$$ We pick $\alpha',\delta'\in K$ such that $(q-1)\cdot(\alpha',\delta')=(\alpha,\delta)$. Then $$\{(e_1q-e_1-e_2)+e_2q^m|\ m\in\mathbb{N}\}\cdot(\alpha',\delta')\subseteq C.$$ Hence for any $m\geq0$, we have $q^m\cdot(\alpha'^{e_2},\delta'^{e_2})\in C'$, where $$C'=(\alpha'^{e_1+e_2-e_1q},\delta'^{e_1+e_2-e_1q})+C.$$

Let $[q]$ be the multiple-by-$q$ map on $\mathbb{G}_m^2$. Then we see that $[q](C')=C'$. Thus $C'$ is defined over $\mathbb{F}_q$. Let $F(x,y)\in\mathbb{F}_q[x,y]$ be the defining equation of $C'$. Then $F(\alpha'^{e_2},\delta'^{e_2})=0$, and $C$ is defined by the formula $$F\left(\frac{\alpha'^{e_2}\cdot x}{\alpha^{e_1}},\frac{\delta'^{e_2}\cdot y}{\delta^{e_1}}\right)=0.$$

For another side, suppose the data $\alpha,\delta$ and $C$ satisfy the conclusion. Then $$\left\{e_1+e_2\frac{q^m-1}{q-1}|\ m\in\mathbb{N}\right\}\cdot(\alpha,\delta)\subseteq C.$$ Hence the intersection is infinite.

\item
The proof is similar as above. If the intersection is infinite, then for the same reason \cite[Theorem B]{MS04}, there exist $q$ and $m_0,n_0,m_1,n_1$ satisfying condition (a) such that $$(\alpha^{m_0},\delta^{n_0})+\left\{\frac{q^m-1}{q-1}|\ m\in\mathbb{N}\right\}\cdot(\alpha^{m_1},\delta^{n_1})\subseteq C.$$ Pick $\alpha',\delta'\in K$ such that $(q-1)\cdot(\alpha',\delta')=(\alpha,\delta)$. Then $(\alpha^{m_0},\delta^{n_0})-(\alpha'^{m_1},\delta'^{n_1})+\{q^m|\ m\in\mathbb{N}\}\cdot(\alpha'^{m_1},\delta'^{n_1})\subseteq C$. Same as above, the curve $C':=(\alpha'^{m_1},\delta'^{n_1})-(\alpha^{m_0},\delta^{n_0})+C$ is defined over $\mathbb{F}_q$. Let $F(x,y)\in\mathbb{F}_q[x,y]$ be the defining equation of $C'$. Then $F(\alpha'^{m_1},\delta'^{n_1})=0$, and $C$ is defined by the formula $$F\left(\frac{\alpha'^{m_1}\cdot x}{\alpha^{m_0}},\frac{\delta'^{n_1}\cdot y}{\delta^{n_0}}\right)=0.$$

For another side, suppose the data $\alpha,\delta$ and $C$ satisfy the conclusion. Then $$(\alpha^{m_0},\delta^{n_0})+\left\{\frac{q^m-1}{q-1}|\ m\in\mathbb{N}\right\}\cdot(\alpha^{m_1},\delta^{n_1})\subseteq C.$$ Hence the intersection is infinite.
\end{enumerate}
\end{proof}

 We firstly make an observation. If $f$ and $g$ admit a common fixed point, then we move that point to infinity by a conjugation, and then $f$ and $g$ become linear polynomials. Otherwise, we may conjugate two fixed points of $f$ and $g$ to $\infty$ and 0, respectively. Then $f$ will become a linear polynomial and $g$ will have the form $g(x)=x/(\gamma x + \delta)$.

In view of the reduction step above, the following theorem essentially answers the Question \ref{qu: automorphisms-common-zeros} when $K$ is positive characteristic.

\begin{thm}[Theorem \ref{thm: main-classification-positive-char}]
Let $f(x)=\alpha x+\beta$ and $g(x)=\delta x+\gamma$, where $\alpha,\delta\in K\backslash\overline{\mathbb{F}_p}$ and $\beta,\gamma\in K$. Let $c_1(x),c_2(x)\in K(x)$ be rational functions such that $c_1 \not\in \{f^n : n \in \Z\}$ and $c_2 \not \in \{g^n : n \in \Z\}$. Write $$C_1(x)=\frac{c_1(x)-\frac{\beta}{1-\alpha}}{x-\frac{\beta}{1-\alpha}}$$ and $$C_2(x)=\frac{c_2(x)-\frac{\gamma}{1-\delta}}{x-\frac{\gamma}{1-\delta}}.$$

\begin{enumerate}
\item
$\bigcup\limits_{n\in\mathbb{Z}}\left\{x\in K|\ f^n(x)=c_1(x),g^n(x)=c_2(x)\right\}$ is an infinite set if and only if $C_1$ and $C_2$ are non-constant, and there exist
\begin{enumerate}
\item
$e_1\in\mathbb{Z},e_2\in\mathbb{Z}\backslash\{0\}$, and $q$ which is a power of $p$,
\item
$\alpha',\delta'\in K$ such that $\alpha'^{q-1}=\alpha$ and $\delta'^{q-1}=\delta$, and
\item
an absolutely irreducible polynomial $F(x,y)\in\mathbb{F}_q[x,y]$,
\end{enumerate}
such that $F(\alpha'^{e_2},\delta'^{e_2})=0$ and $$F\left(\frac{\alpha'^{e_2}}{\alpha^{e_1}}\cdot C_1(x),\frac{\delta'^{e_2}}{\delta^{e_1}}\cdot C_2(x)\right)$$ is identically zero.

\item
$\bigcup\limits_{(m,n)\in\mathbb{Z}^2}\left\{x\in K|\ f^m(x)=c_1(x),g^n(x)=c_2(x)\right\}$ is an infinite set if and only if $C_1$ and $C_2$ are non-constant, and there exist
\begin{enumerate}
\item
$(m_0,n_0)\in\mathbb{Z}^2,(m_1,n_1)\in\mathbb{Z}^2\backslash\{(0,0)\}$, and $q$ which is a power of $p$,
\item
$\alpha',\delta'\in K$ such that $\alpha'^{q-1}=\alpha$ and $\delta'^{q-1}=\delta$, and
\item
an absolutely irreducible polynomial $F(x,y)\in\mathbb{F}_q[x,y]$
\end{enumerate}
such that $F(\alpha'^{m_1},\delta'^{n_1})=0$ and $$F\left(\frac{\alpha'^{m_1}}{\alpha^{m_0}}\cdot C_1(x),\frac{\delta'^{n_1}}{\delta^{n_0}}\cdot C_2(x)\right)$$ is identically zero.
\end{enumerate}

If $$g(x)=\frac{x}{\gamma x+\delta},$$ then we write $$C_2(x)=x\cdot\frac{\frac{1}{c_2(x)}-\frac{\gamma}{1-\delta}}{1-\frac{\gamma x}{1-\delta}}$$ and the same statements hold.
\end{thm}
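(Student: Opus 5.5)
The plan is to reduce the statement to the preceding Lemma, exactly as in the characteristic $0$ propositions, via the rational map $\Phi=(C_1,C_2)$ from $\mathbb{A}^1$ to $\mathbb{G}_m^2$. Since $\alpha,\delta\notin\overline{\mathbb{F}_p}$, neither is a root of unity, so in particular $\alpha,\delta\neq 1$ and the fixed points $\beta/(1-\alpha)$, $\gamma/(1-\delta)$ are finite. For $x\neq\beta/(1-\alpha)$, the equation $f^m(x)=c_1(x)$ is equivalent to $C_1(x)=\alpha^m$, because
\[
f^m(x)-\tfrac{\beta}{1-\alpha}=\alpha^m\left(x-\tfrac{\beta}{1-\alpha}\right).
\]
Likewise $g^n(x)=c_2(x)$ is equivalent to $C_2(x)=\delta^n$. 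In the case $g(x)=x/(\gamma x+\delta)$ we use instead $1/g^n(x)=\delta^n/x+\gamma(1-\delta^n)/(1-\delta)$, and the stated modified $C_2$ gives $C_2(x)=\delta^n$ away from finitely many bad points. So the set in (1), respectively (2), agrees up to finitely many points with $\Phi^{-1}(\Gamma)$, where $\Gamma=\{(\alpha^n,\delta^n)\}$, respectively $\Gamma=\{(\alpha^m,\delta^n)\}$.

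For the forward direction, assume the set is infinite.
\begin{itemize}
\item First we show $C_1$ and $C_2$ are non-constant. If $C_1\equiv\kappa$, then $\kappa=\alpha^{m}$ for some $m$; since $\alpha$ is not a root of unity this $m$ is unique, and then $c_1=f^m$ with $m\in\mathbb{Z}$, which is excluded. The same argument applies to $C_2$.
\item Hence $\Phi$ is non-constant. Its image closure $C$ is an irreducible curve in $\mathbb{G}_m^2$ (after discarding finitely many points).
\item Each point of $\Gamma\cap C$ has only finitely many preimages under $\Phi$. Since $\Phi^{-1}(\Gamma)$ is infinite, $C\cap\Gamma$ must be infinite.
\item The preceding Lemma then gives $q$, $\alpha',\delta'$, integers $e_1,e_2$ (or $m_0,n_0,m_1,n_1$), and an absolutely irreducible $F\in\mathbb{F}_q[x,y]$ cutting out $C$ in the twisted coordinates.
\item Since $\Phi$ maps into $C$, composing gives that $F\left(\frac{\alpha'^{e_2}}{\alpha^{e_1}}C_1,\frac{\delta'^{e_2}}{\delta^{e_1}}C_2\right)$ is identically zero.
\end{itemize}

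For the converse, assume $C_1,C_2$ are non-constant and the stated identity holds. Let $C$ be the irreducible curve defined by the twisted $F$. It is irreducible because $F$ is absolutely irreducible. The image of $\Phi$ is then dense in $C$. By the Lemma, $C\cap\Gamma$ is infinite; explicitly it contains $\{e_1+e_2\frac{q^m-1}{q-1}\}\cdot(\alpha,\delta)$. A non-constant rational map from $\mathbb{P}^1$ to a curve is surjective onto the closure of its image, which in $\mathbb{P}^1\times\mathbb{P}^1$ is $\overline{C}$. Hence all but finitely many of these points have a preimage. Distinct points have disjoint preimages, so we obtain infinitely many $x$, after removing the finitely many bad points.

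The main obstacle is bookkeeping rather than substance. We must check that the finitely many excluded points (the poles of $C_i$, the fixed points, and in the Möbius case $x=0$ and $x=(1-\delta)/\gamma$) do not affect infiniteness. We must also check that the exclusion $c_i\notin\{f^n:n\in\mathbb{Z}\}$ is exactly what non-constancy of $C_i$ requires. Finally, the surjectivity step in the converse needs care at points of $C$ at infinity, which is harmless because only finitely many points are missed.
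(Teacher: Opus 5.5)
Your proposal is correct and follows the paper's proof. Like the paper, you rewrite the equations as $(C_1(x),C_2(x))\in\Gamma$, take the closure of the image of $x\mapsto (C_1(x),C_2(x))$ in $\mathbb{G}_m^2$, and apply the preceding Lemma in both directions. Your extra bookkeeping makes explicit what the paper's terse proof leaves implicit: finite fibres of $\Phi$, the finitely many excluded points, and properness of the extended map $\mathbb{P}^1\to\mathbb{P}^1\times\mathbb{P}^1$ in the converse.
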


\begin{proof}
The proposition has four parts. We shall only prove part (1) for $g(x)=\delta x+\gamma$, as the proof of other parts are just the same.

Firstly, suppose the set is infinite. We rewrite the conditions $f^n(x)=c_1(x)$ and $g^n(x)=c_2(x)$ as $\alpha^n=C_1(x)$ and $\delta^n=C_2(x)$. The assumptions on $c_1$ and $c_2$ then guarantee that $C_1$ and $C_2$ are non-constant. Let $C\subseteq\mathbb{G}_m^2$ be the closed image of the rational map $\mathbb{P}^1\dashrightarrow\mathbb{G}_m^2$ given by $x\dashrightarrow(C_1(x),C_2(x))$. Then this curve has an infinite intersection with the cyclic group $\mathbb{Z}\cdot(\alpha,\delta)$. Hence we conclude by Lemma 1(1).

For another side, suppose the conclusion is satisfied. Then Lemma 1(1) says that the curve $C$ as above has an infinite intersection with $\mathbb{Z}\cdot(\alpha,\delta)$. Hence the set is infinite.
\end{proof}

\begin{rmk}
The hypotheses on $\alpha,\delta$ and $c_1,c_2$ in the proposition above are just to rule out some degenerate cases and simplify the conclusion. The arguments for these degenerate cases are essentially easy but somehow tedious, so we leave them to interested readers.
\end{rmk}

Using our description, one can give a negative answer of the first part of \cite[Question 17]{HT17}. We firstly restate the question in a slightly more general and seemingly more natural way.

\begin{question}(first part of \cite[Question 17]{HT17})
Let $f$ and $g$ be two compositionally independent non-isotrivial polynomials in $K[x]$, and let $c\in K[x]$. Is it true that there are at most finitely many $\lambda\in K$ such that there is an $n$ for which $(x-\lambda)$ divides $\mathrm{gcd}(f^n(x)-c(x),g^n(x)-c(x))$?
\end{question}

\begin{rmk}
\begin{enumerate}
\item
Our above discussion about the problem has nothing to do with the compositional dependence of $f$ and $g$. Indeed, this requirement comes from the problem for higher degree endomorphisms (in characteristic 0).

\item
In the original question, the authors consider the problem over the base ring $\mathbb{F}_q[T]$. One can see that our counterexamples below also give a negative answer to the original question.
\end{enumerate}
\end{rmk}

Now we give counterexamples of the question. We consider linear polynomials $f$ and $g$.

\begin{example}
Let $q$ be a power of $p$. Let $a\in\mathbb{F}_q^{\times},t\in K\backslash\overline{\mathbb{F}_p}$ and let $h(x)\in\mathbb{F}_q[x]$ be a non-constant polynomial. Let $f(x)=(th(t)+1)x+ath(t)$ and $g(x)=((t+a)h(t)+1)x$. Fortunately, they are indeed compositionally independent. Let $c(x)=x((x+a)h(x)+1)$. Let $\lambda_m=t^{q^m}$ for every $m\geq0$. Then one can see that $$(x-\lambda_m)\mid\mathrm{gcd}\left(f^{q^m}(x)-c(x),g^{q^m}(x)-c(x)\right)$$ for every $m$.
\end{example}

\section{Connection to the Generalized Dynamical Mordell-Lang Problem}\label{sec: connect-GDML-group}
In this section and the next, we will show that for some well studied dynamical systems $(X,f,g,c)$, the set of positive integers 
$$\{n : f^n(x) = g^n(x) = c(x) \text{ admit solutions}\}$$
is a finite union of arithmetic progression, which, as explained in the introduction, is a special case of the generalized Dynamical Mordell-Lang Problem (Question \ref{qu: DML-varieties}) asked in \cite[Question 9.13(i)]{Xie23}.

We repeat the Question \ref{qu: DML-varieties} here for the convenience of the reader: 
\begin{question}[Question \ref{qu: DML-varieties}]
Let $X$ be a variety and let $F$ be an endomorphism of $X$. Let $Z$ and $V$ be irreducible closed subvarieties of $X$ such that $\mathrm{dim}(Z)+\mathrm{dim}(V)<\mathrm{dim}(X)$. Then can we describe the set $\{n\in\mathbb{N}|\ F^n(Z)\cap V\neq\emptyset\}$? If the characteristic is 0, is it a finite union of arithmetic progressions?
\end{question}

We will answer this particular special case of Question \ref{qu: DML-varieties} in the case that $f$, $g$ and $c$ are all polynomials and $X = \A^1_\C$.

Moreover, when $F$ is an automorphisms on the projective variety $X$ coming from an algebraic group action and everything is defined over an algebraically closed field $K$, we answer Question \ref{qu: DML-varieties} completely.\\

In this section, we will resolve the case when $F$ is an automorphism defined on the projective variety $X$ coming from an algebraic group. We will leave the case regarding polynomials to the next section.\\

It is worth remarking that when studying a problem in dynamical systems, the first---and often the most tractable---case to consider is when the dynamics arise from an algebraic group. Systems coming from group actions typically exhibit lower complexity, and key objects such as forward orbits of subvarieties and iterates of the maps often have bounded degree. For these reasons, it is natural to begin with the algebraic group case before turning to other special instances of Question~\ref{qu: DML-varieties}.\\

We shall focus on the projective case for simplicity. The general case can be tackled in the same way, but the conclusion will be more complicated. Please see Remark \ref{rmk: non-projective-GDML} for this.

For the positive characteristic case, we need the notion of ``widely $p$-normal sets". We omit the somehow complicated definition here and refer to \cite[Definition 1.1]{XY25}.

\begin{thm}[Theorem \ref{thm: main-group-GDML}] \label{thm: group-GDML}
Let $f$ be an automorphism of a projective variety $X$. Suppose that $f$ comes from an algebraic group action. Let $Z$ and $V$ be closed subvarieties of $X$. Consider the set $\{n\in\mathbb{Z}|\ f^n(Z)\cap V\neq\emptyset\}$.
\begin{enumerate}
\item
If $\mathrm{char}(K)=0$, then this set is a finite union of arithmetic progressions.
\item
If $\mathrm{char}(K)=p>0$, then this set is a widely $p$-normal set.
\end{enumerate}
\end{thm}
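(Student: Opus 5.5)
Write $f=\rho(g_0)$, where $\rho\colon G\times X\to X$ is an action of an algebraic group $G$ on $X$ and $g_0\in G(K)$. The plan is to turn the condition $f^n(Z)\cap V\neq\emptyset$ into membership of $g_0^n$ in a closed subset of $G$. Then the classical dynamical Mordell--Lang statement for translations on algebraic groups gives the answer. Set
\[
W\coloneqq\{h\in G \mid \rho(h)(Z)\cap V\neq\emptyset\}.
\]
Then
\[
\{n\in\Z \mid f^n(Z)\cap V\neq\emptyset\}=\{n\in\Z \mid g_0^n\in W\}.
\]

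\textbf{Step 1: $W$ is Zariski closed in $G$.} Consider the incidence variety
\[
I=\{(h,z)\in G\times Z \mid \rho(h,z)\in V\},
\]
which is closed in $G\times Z$. We have $W=\mathrm{pr}_G(I)$. Since $Z$ is a closed subvariety of the projective variety $X$, it is proper, so the projection $G\times Z\to G$ is a closed map. Hence $W$ is closed. This is the only point where projectivity is used; for non-projective $X$, $W$ is merely constructible, which explains why the general case is more complicated.

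\textbf{Step 2: reduce to an orbit on $G$.} The set in question is now the return set $\{n \mid L_{g_0}^n(e)\in W\}$, where $L_{g_0}$ is left translation on $G$ and $e$ is the identity. Replacing $G$ by the Zariski closure $H$ of $\{g_0^n \mid n\in\Z\}$, we reduce to a commutative algebraic group $H$ and a closed subset $W\cap H$.

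In characteristic $0$, the return set of an orbit of an automorphism (here a translation) of a variety into a closed subvariety is a finite union of arithmetic progressions. Two routes give this. One is the dynamical Mordell--Lang theorem for étale maps (Bell--Ghioca--Tucker). The other is more elementary: $H$ is an extension of a finite group by the product of a torus, a vector group and an abelian variety, and for such groups Mordell--Lang for finitely generated subgroups (Faltings, Vojta, McQuillan \cite{MC95}) applies to the cyclic group $\langle g_0\rangle$. The subgroup $\{n \mid g_0^n\in\text{coset } tB\}$ is an arithmetic progression in each case, and negative $n$ are handled symmetrically. This proves part (1).

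\textbf{Step 3: positive characteristic.} Here I would apply the result of \cite{XY25} directly: for a commutative algebraic group over $K$ of characteristic $p$ and a closed subvariety, the set of $n$ with $g_0^n$ in that subvariety is widely $p$-normal. This is the $p$-normal Mordell--Lang phenomenon of Moosa--Scanlon \cite{MS04} for semiabelian varieties, extended in \cite{XY25} to general algebraic groups, where additive (vector group) parts can occur. This gives part (2).

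\textbf{Main obstacle.} The delicate step is making sure the cited positive-characteristic result really covers arbitrary (non-semiabelian) commutative $H$ containing unipotent parts, and that it applies to $\Z$-indexed orbits rather than $\N$-indexed ones. If it does not, I would decompose $H$ and treat the unipotent part separately. A unipotent element has order a power of $p$, so after passing to a finite-index subgroup the orbit lies in a semiabelian-by-finite piece, to which Moosa--Scanlon applies. The characteristic-$0$ part and Step 1 are routine.
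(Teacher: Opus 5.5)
Your proposal is correct and is essentially the paper's proof: the paper also writes $f=\rho_g$, identifies the set with $\{n\in\Z \mid g^n\in \mathrm{pr}_G(\rho_Z^{-1}(V))\}$, uses projectivity of $X$ to make this set closed, and then invokes \cite[Theorem 7]{CS93} or \cite{BGT10} in characteristic $0$ and \cite[Theorem 3.1]{XY25} in characteristic $p$. The one thing to drop is your ``more elementary'' alternative in Step 2: a commutative algebraic group need not be a product (up to a finite group) of a torus, a vector group and an abelian variety, and the Faltings--Vojta--McQuillan theorem covers only semiabelian varieties, so that route does not handle unipotent parts. The \'etale dynamical Mordell--Lang route you give first is enough.
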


\begin{proof}
Write $f=\rho_g$ for an algebraic group action $\rho:G\times X\rightarrow X$ and a point $g\in G(K)$. Then $f^n=\rho_{g^n}$ and hence $f^n(Z)=\rho(\{g^n\}\times Z)$. So $$f^n(Z)\cap V\neq\emptyset$$ is equivalent to $$(\{g^n\}\times Z)\cap\rho^{-1}(V)\neq\emptyset.$$ If we denote $\rho_Z:=\rho|_{G\times Z}$, then this condition is furthermore equivalent to $$g^n\in\mathrm{pr}_G(\rho_Z^{-1}(V)),$$ where $\mathrm{pr}_Z$ is the projection $G\times Z\rightarrow G$.

Let $V_0\subseteq G$ be the closed subset $\mathrm{pr}_G(\rho_Z^{-1}(V))$, then we need to prove $\{n\in\mathbb{Z}|\ g^n\in V_0(K)\}$ has the expected form. If $\mathrm{char}(K)=0$, then this follows from \cite[Theorem 7]{CS93} or \cite{BGT10}. If $\mathrm{char}(K)=p>0$, then this is \cite[Theorem 3.1]{XY25}.
\end{proof}

\begin{rmk}\label{rmk: non-projective-GDML}
If we do not require $X$ to be projective, then the $V_0$ above will just be a constructible set. Hence we need to allow difference sets. For example, if $\mathrm{char}(K)=0$, then the conclusion will be a finite union of one-sided arithmetic progressions. Note that when we restrict to $n \in \N$, an arithmetic progression and a one-sided arithmetic progression coincide. 
\end{rmk}

\section{Special Cases Related to Common Zeros of Iterated Morphisms}\label{sec: GDML-poly}

In this section, we will resolve a special case of Question \ref{qu: DML-varieties} related to our common zeros of iterated morphisms question as mentioned in the previous section. Specifically, we will prove that the the set of $n \in \N^+$ such that 
$$ f^n(x) = g^n(x) = c(x)$$
admits solution is a finite union of arithmetic progressions when $f$, $g$ and $c$ are all polynomials and $X = \A^1_\C$.

It turns out that one of the key steps in our argument relies on an unexpectedly subtle result from elementary number theory, which may be of independent interest. Therefore, before proceeding to prove the relevant special case of Question~\ref{qu: DML-varieties}, we introduce and resolve this number-theoretic problem in a preliminary subsection.

\subsection{Preliminaries on elementary number theory}

The question asks, for a fixed root of unity $\xi$, natural numbers $d_1, d_2 > 1$ and $d_3, a,b,c_1,c_2 \in \N$, for which $n \in \N$ does a system of two equations involving power maps: 
   $$ (\xi^ax^{d_1})^{\circ n} = \xi^{c_1} x^{d_3}  $$
   $$ (\xi^bx^{d_2})^{\circ n} = \xi^{c_2} x^{d_3}$$
admit a common solution in $\C^*$. It turns out that this is equivalent to ask for which index $n$ does a specific greatest common divisor condition hold (see Lemma \ref{lem: power-reduce-to-gcd}). Then, we show that such a set of indices $n$ making the greatest common divisor condition hold is a finite union of arithmetic progressions (that is Proposition \ref{prop: gcd-arithmetic-progressions}). As a direct corollary, we answer this question about common solutions in Corollary \ref{prop: poly-power-arith}.

\begin{lem}\label{lem: power-reduce-to-gcd}
Let \( \xi \) be a primitive \( k \)-th root of unity, and let \( d_1, d_2 > 1 \) be integers.  
Let \( d_3, d_4 \ge 0 \), and let \( a, b \) be integers with \( 0 \le a, b < k \).  
For any \( n \in \mathbb{N} \) satisfying \( \min\{d_1^n, d_2^n\} > \max\{d_3, d_4\} \), there exists \( x_0 \in \mathbb{C} \) such that
\begin{equation}\label{eq: common-sol-power-1}
    x_0^{\, d_1^n - d_3} = \xi^a,
\end{equation}
and
\begin{equation}\label{eq: common-sol-power-2}
    x_0^{\, d_2^n - d_4} = \xi^b,
\end{equation}
if and only if
\[
    k \cdot \gcd(d_2^n - d_4,\; d_1^n - d_3)
    \mid b(d_1^n - d_3) - a(d_2^n - d_4).
\]

\end{lem}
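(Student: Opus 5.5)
Set $M = d_1^n - d_3$ and $N = d_2^n - d_4$. The hypothesis $\min\{d_1^n,d_2^n\} > \max\{d_3,d_4\}$ guarantees that $M, N$ are positive integers. The equations \eqref{eq: common-sol-power-1}--\eqref{eq: common-sol-power-2} then force $x_0 \neq 0$, and in fact $x_0$ is a root of unity of order dividing $kM$. The plan is to work entirely in the cyclic group $\mu_{kMN}$ of roots of unity: every candidate $x_0$ lies in it, so the question becomes a system of linear congruences.

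Fix a primitive $kMN$-th root of unity $\zeta$ with $\zeta^{MN} = \xi$, and write $x_0 = \zeta^t$ with $t \in \Z/kMN$. The first equation reads $tM \equiv aMN \pmod{kMN}$, i.e. $t \equiv aN \pmod{kN}$. Similarly, the second reads $tN \equiv bMN \pmod{kMN}$, i.e. $t \equiv bM \pmod{kM}$. Conversely, any $t$ satisfying both congruences yields a common solution $x_0 = \zeta^t$. Hence a common solution exists if and only if this pair of congruences is simultaneously solvable.

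By the generalized Chinese Remainder Theorem, the system $t \equiv aN \pmod{kN}$, $t \equiv bM \pmod{kM}$ is solvable if and only if
\[
\gcd(kN, kM) = k\gcd(M,N) \mid aN - bM .
\]
Up to sign, this is exactly the stated condition $k\gcd(d_2^n-d_4, d_1^n-d_3) \mid b(d_1^n-d_3) - a(d_2^n-d_4)$.

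The only delicate point is making sure the reduction to roots of unity in a single cyclic group loses nothing. Any solution satisfies $x_0^{kM} = 1$, so $x_0 \in \mu_{kM} \subseteq \mu_{kMN}$, and the parametrization $x_0 = \zeta^t$ is therefore exhaustive. Dividing the congruence $tM \equiv aMN \pmod{kMN}$ by $M$ is legitimate because $M$ divides every term, including the modulus. Apart from checking this, the argument is routine.
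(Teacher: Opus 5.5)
Your proof is correct and is essentially the paper's argument: both write $x_0$ as a root of unity, turn the two equations into linear congruences on its exponent, and finish with the gcd (B\'ezout/CRT) solvability criterion. In fact your common exponent $t=aN+kNz_1=bM+kMz_2$ is exactly the paper's relation $(a+kz_1)(d_2^n-d_4)=(b+kz_2)(d_1^n-d_3)$, and working in the fixed group $\mu_{kMN}$ just removes the paper's auxiliary denominator $m$ (a generator $\zeta$ with $\zeta^{MN}=\xi$ exists because $(\Z/kMN\Z)^\times\to(\Z/k\Z)^\times$ is surjective).
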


\begin{proof}
Fix \( n \in \mathbb{N} \) such that \( \min\{d_1^n, d_2^n\} > \max\{d_3, d_4\} \).  
The existence of \( x_0 \in \mathbb{C} \) satisfying  
\eqref{eq: common-sol-power-1} and \eqref{eq: common-sol-power-2}  
is equivalent to the existence of a positive integer \( m \) and an integer \( y \) such that
\begin{align}
    (d_1^n - d_3)y &\equiv am \pmod{km}, \label{eq: cong1}\\
    (d_2^n - d_4)y &\equiv bm \pmod{km}. \label{eq: cong2}
\end{align}

Equivalently, there exist integers \( z_1, z_2,y \) and positive integer $m$ such that
\begin{align*}
    (d_1^n - d_3)y &= am + km z_1,\\
    (d_2^n - d_4)y &= bm + km z_2.
\end{align*}
Hence,
\[
    \frac{a + kz_1}{d_1^n - d_3}
    = \frac{y}{m}
    = \frac{b + kz_2}{d_2^n - d_4}.
\]

Thus, a solution exists if and only if there are integers \( z_1, z_2 \) satisfying
\[
    (a + kz_1)(d_2^n - d_4) = (b + kz_2)(d_1^n - d_3),
\]
which can be rewritten as
\[
    k\bigl((d_2^n - d_4)z_1 - (d_1^n - d_3)z_2 \bigr)
    = b(d_1^n - d_3) - a(d_2^n - d_4).
\]

Such integers \( z_1, z_2 \) exist precisely when
\[
    k \cdot \gcd(d_2^n - d_4,\; d_1^n - d_3)
    \mid b(d_1^n - d_3) - a(d_2^n - d_4),
\]
establishing the desired criterion.
\end{proof}

\begin{prop}[Proposition \ref{prop: gcd-arithmetic-progressions}]

Let \( K \) be a number field with ring of integers \( \mathcal{O} \).  
Let \( \k \subseteq \mathcal{O} \) be an ideal and let \( d_1, d_2 \in \mathcal{O} \).  
Let \( d_3, a, b \in \mathcal{O} \).  
Then the set
\[
\left\{\, n \in \mathbb{N} :\; b(d_1^{n} - d_3) - a(d_2^{n} - d_3) \in 
\k\, (d_1^{n} - d_3,\; d_2^{n} - d_3) \,\right\}
\]
is a finite union of arithmetic progressions.

\end{prop}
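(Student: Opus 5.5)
Write $u_n = d_1^n - d_3$, $v_n = d_2^n - d_3$ and $w_n = b u_n - a v_n$. Note that $w_n \in (u_n, v_n)$ always holds. The plan is to localize the condition at the finitely many primes dividing $\k$, turn each local condition into a statement about $\mathfrak P$-adic analytic functions of $n$, and show that each local condition carves out a finite union of residue classes (plus finitely many points). Intersecting over the primes then gives the claim. If $\k = 0$ the condition is $w_n = 0$, and the argument below with "$e=\infty$" handles this. So assume $\k \neq 0$ and factor $\k = \prod_{\mathfrak P} \mathfrak P^{e_{\mathfrak P}}$.

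\textbf{Step 1 (localization).} Since $\O$ is a Dedekind domain, $w_n \in \k(u_n,v_n)$ holds if and only if, for every prime $\mathfrak P$,
\[
v_{\mathfrak P}(w_n) \ \ge\ v_{\mathfrak P}(\k) + \min\{v_{\mathfrak P}(u_n), v_{\mathfrak P}(v_n)\}.
\]
For $\mathfrak P \nmid \k$ this is automatic because $w_n \in (u_n,v_n)$. So the set in question is a finite intersection, over $\mathfrak P \mid \k$, of the sets $S_{\mathfrak P}$ cut out by the local condition. A finite intersection of finite unions of arithmetic progressions (singletons allowed) is again of that form, so it suffices to treat one $\mathfrak P$ with $e = e_{\mathfrak P} \ge 1$.

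\textbf{Step 2 (non-unit cases).} If $v_{\mathfrak P}(d_1) > 0$, then $d_1^n \to 0$ $\mathfrak P$-adically. Hence $v_{\mathfrak P}(u_n)$ is eventually constant, equal to $v_{\mathfrak P}(d_3)$, if $d_3 \neq 0$. If $d_3 = 0$, then $v_{\mathfrak P}(u_n) = n\,v_{\mathfrak P}(d_1)$, which is linear in $n$. I will treat these sub-cases by hand: mixed unit/non-unit combinations reduce to comparing an eventually constant or linear valuation against the valuation of an analytic function from Step 3. All such comparisons are eventually periodic in $n$. The same applies to $d_2$, and the cases $d_i = 0$ are trivial.

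\textbf{Step 3 (unit case via $p$-adic interpolation).} Suppose $d_1, d_2$ are $\mathfrak P$-units. Choose $N$ such that $d_i^N \equiv 1$ to high enough order that $t \mapsto d_i^{n_0 + Nt} = d_i^{n_0}\exp(t \log d_i^N)$ is analytic on $\mathbb Z_p$, as in the Skolem--Mahler--Lech method. On each residue class $n_0 \bmod N$ we obtain analytic functions $U(t), V(t), W(t) = bU - aV$ on $\mathbb Z_p$ with coefficients in $\O_{\mathfrak P}$. By Weierstrass preparation, each nonzero one is a polynomial times a unit of the Tate algebra. Let $H = \gcd(U,V)$ in the Tate algebra $K_{\mathfrak P}\langle t\rangle$, which is a PID. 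Write $U = HU'$ and $V = HV'$, so $W = HW'$ with $W' = bU' - aV'$. Then $v(H(t))$ cancels from both sides, and the condition becomes
\[
v(W'(t)) \ \ge\ e + \min\{v(U'(t)), v(V'(t))\}.
\]

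\textbf{Step 4 (main obstacle: showing the condition set is clopen).} Since $U'$ and $V'$ have no common zero in the closed disc, $m(t) := \min\{v(U'(t)), v(V'(t))\}$ is locally constant. Away from the zeros of $U'$, the value $v(U'(t))$ is locally constant. Near a zero of $U'$, $V'$ is nonvanishing, so $v(V'(t))$ is locally constant and $v(U'(t))$ is large. Likewise $v(W'(t))$ is locally constant off the finitely many zeros of $W'$. Near a zero of $W'$, $v(W'(t)) \to \infty$ while $m$ stays bounded, so the inequality holds on a neighbourhood. Hence the set of $t \in \mathbb Z_p$ satisfying the condition is open, and its complement is also open. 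By compactness of $\mathbb Z_p$, the set is a finite union of residue classes modulo some $p^k$. If $W' \equiv 0$ the condition holds for all $t$. Pulling back to $n = n_0 + Nt$ gives a finite union of arithmetic progressions, which completes the proof.

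The delicate points are making the gcd-cancellation rigorous in the Tate algebra, and checking that the mixed unit/non-unit cases of Step 2 are genuinely eventually periodic. I expect Step 4, together with these bookkeeping cases, to be where the real care is needed.
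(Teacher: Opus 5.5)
Your proposal is correct, and it takes a genuinely different route from the paper.

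\textbf{The paper's route.} The paper works by explicit case distinctions.
\begin{itemize}
\item It first treats $d_1$ or $d_2$ zero or a root of unity, and the case $d_1^m=d_2^m$.
\item For $d_3$ neither zero nor a root of unity, it proves via a $\mathfrak p$-adic logarithm argument that $v_{\mathfrak p}\bigl((d_1^n-d_3,d_2^n-d_3)\bigr)$ is bounded in $n$. The condition then splits into finitely many level sets, each cut out by congruences.
\item For $d_3$ a root of unity this valuation is unbounded. The paper writes $d_i=\mu_i\omega_i$, computes $v_{\mathfrak p}(\omega_i^n-1)=v_{\mathfrak p}(n)+\mathrm{const}$ on suitable residue classes by binomial expansion, and reduces to polynomial congruences in $n$.
\item The case $d_3=0$ is handled separately.
\end{itemize}

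\textbf{Your route.} Your Tate-algebra gcd $H$ does all of this at once. The common zeros of $U,V$ in $\mathbb Z_p$ are exactly the source of unbounded gcd valuation; in the paper's root-of-unity case this is the common zero at $n=0$ of $\omega_1^n-1$ and $\omega_2^n-1$. Because $W=bU-aV$ is automatically divisible by $H$, cancelling it leaves a condition whose right-hand side $e+m(t)$ is locally constant. Local constancy and compactness of $\mathbb Z_p$ then finish the argument.

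\textbf{Comparison.} Your approach is uniform: it never needs to know whether $d_3$ is a root of unity or whether $d_1,d_2$ are multiplicatively dependent. It applies verbatim to any sequences that are $p$-adic analytic on residue classes, provided $w_n$ is a fixed $\mathcal O$-linear combination of $u_n,v_n$. The paper's approach is more elementary and yields explicit valuation bounds and moduli.

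\textbf{Points to tidy.}
\begin{itemize}
\item At integers $t$ with $H(t)=0$, i.e.\ $d_1^n=d_2^n=d_3$, you cannot cancel $v(H(t))$. The original condition holds trivially there, and such $t$ are finitely many unless $U\equiv V\equiv 0$ on that class.
\item For $\mathfrak k=0$ the clopen argument does not literally apply. Instead, at any prime where $d_1,d_2$ are units, the condition is $W(t)=0$, and by Strassmann's theorem its zero set in $\mathbb Z_p$ is either finite or all of $\mathbb Z_p$.
\item Step 2 goes through as follows. If $d_1\in\mathfrak P$ and $d_3\neq 0$, then eventually $\min\{v(u_n),v(v_n)\}\le v(d_3)$. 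So the condition depends only on $d_1^n,d_2^n$ modulo a fixed power of $\mathfrak P$, which is eventually periodic in $n$. If $d_3=0$ and exactly one $d_i$ lies in $\mathfrak P$, the minimum is $0$. If $d_3=0$ and both $d_i$ lie in $\mathfrak P$, factor out powers of a uniformizer as in the paper's Case III.
\end{itemize}
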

\begin{proof}
The major case is when neither $d_1$ nor $d_2$ is zero or a root of unity. Before we start to discuss this major case, we first treat the relatively easy case when at least one of them is $0$ or a root of unity. Suppose one of them is $0$ and without loss of generality we assume that it is $d_1$, then for each prime $\p \mid \k$, we have $v_\p(-d_3, d^n_2 -d_3)$ is bounded by a natural number $l(\p)$. Also, for each $l' \leq l$, we have the set 
$$ L'_{l',\p} \coloneqq \{ n \in \N^+ : v_\p(-d_3, d^n_2 -d_3) \geq l'\}$$
is a finite union of arithmetic progressions since 
$$v_\p(-d_3, d^n_2 -d_3) \geq l'$$
is equivalent to 
$$ d^n_2 \equiv d_3 \pmod{\p^{l'}},$$
$$ d_3 \equiv 0 \pmod{\p^{l'}}.$$

Since the complement of a finite union of arithmetic progressions is again a finite union of arithmetic progressions, we also have 
$$ L_{l',\p} \coloneqq \{ n \in \N^+ : v_\p(-d_3, d^n_2 -d_3) = l'\}$$
is a finite union of arithmetic progressions.

Now, we have 
\begin{align}
  \left\{\, n \in \mathbb{N} :\; b(d_1^{n} - d_3) - a(d_2^{n} - d_3) \in 
\k\, (d_1^{n} - d_3,\; d_2^{n} - d_3) \,\right\}  \\
= \bigcap_{\p \mid \k} \bigcup_{0 \leq l' \leq l(\p)} \{n \in L_{l',\p}: -ad_3 +b(d^n_2 - d_3) \equiv 0 \pmod{\p^{l' + v_\p(\k)}} \},
\end{align} 
which is again a finite union of arithmetic progressions.\\

Now, suppose one of $d_1$ and $d_2$ is a root of unity and again we assume it is $d_1$. Note that if there exists a $n \in \N$ such that $d^n_1 = d_3$, then it holds for any $n \in L \coloneqq \{ms + t: \forall m \in \N \}$ for some $s,t \in \N$. Then the set 
$$ \{n \in L : b(d_1^{n} - d_3) - a(d_2^{n} - d_3) \in 
\k\, (d_1^{n} - d_3,\; d_2^{n} - d_3)\} $$ 
$$= \{n \in L : -a(d^n_2 -d_3) \in \k(d^n_2-d_3)\} = L.$$

When $n \in \N \setminus L$, we again have $v_\p(d^n_1 -d_3, d^n_2 -d_3)$ is bounded by a natural number $l(\p)$ for each $\p \mid \k$, since $d^n_1-d_3$ takes only finitely many possible values. Therefore, repeating the argument above for the case that $d_1 = 0$, we again obtain that in this case the set of $n$ satisfying the condition is a finite union of arithmetic progressions.\\

   Now, we begin to discuss the major case when $d_1, d_2$ are assumed to be non-zero and not root of unity. We first handle the special case that there exists a $m \in \N^+$ such that $d^m_1 = d^m_2$. Then obviously we only need to verify that for any $i \in \{1,2, \dots , m-1\}$, the set 
   \[\left\{\, n \in L_i :\; b(d_1^{n} - d_3) - a(d_2^{n} - d_3) \in 
\k\, (d_1^{n} - d_3,\; d_2^{n} - d_3) \,\right\}\]
is a finite union of arithmetic progressions, where $L_i \coloneqq \{mn + i: n \in \N\}$. It is sufficient to prove that for each prime ideal $\p \mid \k$, the set of $n \in L_i$ satisfying 
\[ b(d_1^n - d_3) - a(d_2^n - d_3)
    \in \mathfrak{p}^{\, v_{\mathfrak{p}}(\k)} (d_1^n - d_3,\; d_2^n - d_3)\]
is a finite union of arithmetic progressions. Note that for any $i \in \{1,2, \dots, m-1\}$ and $n \in L_i$, we have $d^n_2 = \mu_i d^n_1$ where $\mu_i \neq 1$ is a root of unity of order $m$. Then there exists a $r \in \N$ such that 
\[v_\p (d^n_1 - d_3, d^n_2 -d_3)  \leq r\]
for any $n \in L_i$. This is because 
$$ v_p(d^n_1 - d_3, d^n_2 -d_3) = v_p(d^n_1 - d_3, \mu_id^n_1 - d_3) = v_p(d^n_1 -d_3,d^n_1 - \mu^{-1}_id_3) \leq v_p((1 - \mu_i^{-1})d_3).$$
Also, note that for any $r' \in \N^+$, the set of $n \in \N^+$ such that 
$$ d^n_1 \equiv d_3 \pmod{\p^{r'}}$$
$$d^n_2 \equiv d_3 \pmod{\p^{r'}}$$
is a finite union of arithmetic progressions. Since the complement of a finite union of arithmetic progressions is again a finite union of arithmetic progressions, we have that for any $r' \in \{0,1,2,\dots, r\}$ the set of $n \in \N^+$ such that 
$$v_p(d^n_1 - d_3, d^n_2 -d_3) = r'$$
is a finite union of arithmetic progressions, denoted as $S_{r'}$. 

Therefore, we have 
 \[\left\{\, n \in L_i :\; b(d_1^n - d_3) - a(d_2^n - d_3)
    \in \mathfrak{p}^{\, v_{\mathfrak{p}}(\k)} (d_1^n - d_3,\; d_2^n - d_3) \,\right\}\]
\[ = \bigcup_{0 \leq r' \leq r}\left\{\, n \in L_i\cap S_{r'} :\; b(d_1^n - d_3) - a(d_2^n - d_3)
    \in \mathfrak{p}^{\, v_{\mathfrak{p}}(\k) + r'} \,\right\},\]
    which is a finite union of arithmetic progressions.\\

    From now on, we assume that there doesn't exists a $m \in \N^+$ such that $d^m_1 = d_2^m$.

    {\bf Case \RNum{1}:} We first consider the case \( d_3 \neq 0\) and is not a root of unity.  
Again, it suffices to prove that, for each prime ideal  
\( \mathfrak{p} \mid \k \), the set of \( n \in \mathbb{N} \) satisfying
\[
b(d_1^n - d_3) - a(d_2^n - d_3)
    \in \mathfrak{p}^{\, v_{\mathfrak{p}}(\k)} (d_1^n - d_3,\; d_2^n - d_3)
\]
is a finite union of arithmetic progressions.  
Fix such a prime ideal \( \mathfrak{p} \). We prove the following claim.

\begin{claim}
Suppose there doesn't exists a $ m \in \N^+$ such that $d^m_1 = d^m_2$. There exists a non-negative integer \( \ell \) such that
\[
\max_{n \ge 1} v_{\mathfrak{p}}\bigl( (d_1^n - d_3,\; d_2^n - d_3) \bigr) = \ell.
\]
Moreover, for each \( \ell_1 \in \{0,1,\dots,\ell\} \), the set
\[
\{\, n \ge 1 : v_{\mathfrak{p}}( (d_1^n - d_3,\; d_2^n - d_3) ) = \ell_1 \,\}
\]
is a finite union of arithmetic progressions.
\end{claim}

\begin{proof}[Proof of the Claim]

Note first that if one of $d_1, d_2 $ is in $\p$, then the claim is obviously true. So, we assumed that $d_1, d_2$ are not in $\p$.

We first show that
\[
\max_{n\ge 1} v_{\mathfrak{p}}\bigl( (d_1^n - d_3,\; d_2^n - d_3) \bigr)
\]
is finite. 

Assume for the purpose of contradiction that this maximum is unbounded.  
Then for every \( \ell \ge 1 \) there exists \( n \ge 1 \) such that
\begin{align}
d_1^n - d_3 &\in \mathfrak{p}^{\,\ell}, \label{eq:p1} \\
d_2^n - d_3 &\in \mathfrak{p}^{\,\ell}. \label{eq:p2}
\end{align}

Write 
\[d_1 = \mu_1 \omega_1,\]
\[d_2 = \mu_2 \omega_2,\]
where 
$\mu_1, \mu_2$ are roots of unity of order $q-1$, where $q = |\O_\p/\p|$, and $\omega_1, \omega_2 \in 1 + \p \O_\p$.

The conditions \eqref{eq:p1}-\eqref{eq:p2} become that for any $\ell \in \N$ there exists a $n \in \N$
\begin{align}
 \omega_1^{n} - \mu^{-n}_1d_3 &\in \mathfrak{p}^{\,\ell}, \label{eq:w1}\\
\omega_2^{n} -  \mu^{-n}_2d_3 &\in \mathfrak{p}^{\,\ell}. \label{eq:w2}
\end{align}
Let us denote $$S_0 \coloneqq \{ n \in \N^+ : \mu_1^n = \mu_2^n\}.$$

For any \( n \in S_0 \), we have  
\[ \mu_1^{-n} d_3 = \mu_2^{-n} d_3 \eqqcolon d_{4,n}.\] 
Thus, conditions \eqref{eq:w1}-\eqref{eq:w2} imply that for any $\ell \in \N$ large enough, there exists $n \in \N^+$ such that 
$$ \mu_1^{-n}d_3 \equiv 1 \pmod{\p},$$
$$ \mu_2^{-n}d_3 \equiv 1 \pmod{\p},$$
which implies that $\mu_1^n = \mu_2^n$.
Therefore, the condition also implies that for any large enough $l \in \N$, there exists a $n \in S_0$ such that
\[
\omega_1^{n} \equiv d_{4,n} \equiv \omega_2^{n} \pmod{\mathfrak{p}^{\,\ell}},
\]
which in particular implies that 
$$ \omega_1^{2^mn} \equiv d^{2^m}_{4,n} \equiv \omega_2^{2^mn} \pmod{\mathfrak{p}^{\,\ell}},$$
for any $m \in \N$.

Let $e =1 +  v_\p((2))$. Note that since $\omega_1$ and $\omega_2 \in 1 + \p \O_\p$, we have $\omega^{2^m}_1$ and $\omega^{2^m}_2 \in 1 + \p^{e} \O_{\p}$ for any $m \in \N$ such that $2^m \geq e$. Fixing such a $m \in \N$ and applying the \( \mathfrak{p}\)-adic logarithm yields that for any large enough $\ell \in \N$, there exists a $n \in S_0$ such that
\begin{align}
n\log_{\mathfrak{p}}(\omega^{2^m}_1) - \log_{\mathfrak{p}}(d^{2^m}_{4,n}) &\in \mathfrak{p}^{\,\ell}, \\
n\log_{\mathfrak{p}}(\omega^{2^m}_2) - \log_{\mathfrak{p}}(d^{2^m}_{4,n}) &\in \mathfrak{p}^{\,\ell}.
\end{align}
Note that this in particular implies that $v_\p(n) \leq v_\p(\log_\p d^{2^m}_{4,n})$ when $\ell$ is sufficiently large. Then in particular, for any $\ell$ large enough, there exists a $n \in S_0$ such that
\begin{align}
n\log_{\mathfrak{p}}(\omega^{2^m}_1)\log_{\mathfrak{p}}(\omega^{2^m}_2) - \log_{\mathfrak{p}}(d^{2^m}_{4,n})\log_{\mathfrak{p}}(\omega^{2^m}_2) &\in \mathfrak{p}^{\,\ell}, \\
n\log_{\mathfrak{p}}(\omega^{2^m}_2) \log_{\mathfrak{p}}(\omega^{2^m}_1)- \log_{\mathfrak{p}}(d^{2^m}_{4,n})\log_{\mathfrak{p}}(\omega^{2^m}_1) &\in \mathfrak{p}^{\,\ell},
\end{align}
which implies that 

\[
\log_{\mathfrak{p}}(d^{2^m}_{4,n})\,\bigl( \log_{\mathfrak{p}}(\omega^{2^m}_1)-\log_{\mathfrak{p}}(\omega^{2^m}_2)\bigr)
\in \mathfrak{p}^{\,\ell}
.\]
 Since $d_{4,n}$ takes finitely many values when $n$ varies in $S_0$ and $d_3$ is not a root unity, this forces
\[
\log_{\mathfrak{p}}(\omega^{2^m}_1)=\log_{\mathfrak{p}}(\omega^{2^m}_2),
\]
hence \( \omega^{2^m}_1=\omega^{2^m}_2 \).  
Since also \( \mu_1^n=\mu_2^n \) for all \(n \in S_0\), this implies  
\( d^{{2^m}n}_1=d^{{2^m}n}_2 \) for all $n \in S_0$, contradicting our assumptions.  
Thus, there exists a positive integer \( \ell \) such that
\[
\max_{n \ge 1} v_{\mathfrak{p}}\bigl( (d_1^n - d_3,\; d_2^n - d_3) \bigr) = \ell.
\]

Now fix \( \ell_1\in \{0,\dots,\ell\} \).  

If \( \ell_1=0 \), the set  
\[
\{n: v_{\mathfrak{p}}((d_1^n-d_3,d_2^n-d_3))=0\}
\]
is the complement of \( S_0 \), hence again a finite union of arithmetic progressions.

If \( \ell_1\ge 1 \), the condition  
\( v_{\mathfrak{p}}((d_1^n-d_3,d_2^n-d_3))\ge \ell_1 \)  
is equivalent to the pair of congruences
\[
d^n_1 \equiv d_3
    \pmod{\mathfrak{p}^{\,\ell_1}},\qquad
d^n_2 \equiv d_3
    \pmod{\mathfrak{p}^{\,\ell_1}},
\]
each defining a finite union of arithmetic progressions.  
Their intersection is again a finite union of arithmetic progressions.  
Subtracting the case \( \ell_1+1 \) shows that the set where the valuation is exactly \( \ell_1 \) is also a finite union of arithmetic progressions.
\end{proof}
Let  
\[
S_{\ell_1} \coloneqq 
\{\, n \ge 1 : v_{\mathfrak{p}}((d_1^n-d_3,d_2^n-d_3))=\ell_1 \,\}.
\]
Each \( S_{\ell_1} \) is a finite union of arithmetic progressions.  

Then the set of \( n \ge 1 \) with
\[
b(d_1^n - d_3) - a(d_2^n - d_3)
    \in \mathfrak{p}^{\,v_{\mathfrak{p}}(\k)} (d_1^n - d_3,\; d_2^n - d_3)
\]
is
\[
\bigcup_{0 \le \ell_1 \le \ell}
\bigl\{
n\in S_{\ell_1}:
b(d_1^n - d_3) - a(d_2^n - d_3) 
    \in \mathfrak{p}^{\,v_{\mathfrak{p}}(\k)+\ell_1}
\bigr\},
\]
which is again a finite union of arithmetic progressions.

This completes the case \( d_3 \neq 0 \) and also not a root of unity. \\

{\bf Case \RNum{2}:} Now, we assume $d_3$ is a root of unity. 

Again, it is enough to show that for any prime ideal $ \p \mid \k $, the set of $n \in \N$ such that 
$$ (b(d^n_1 - d_3) - a(d^n_2 -d_3)) \in \p^{v_\p(\k)} (d^n_1 -d_3, d^n_2 - d_3) $$
is a finite union of arithmetic progressions. 

Fix a prime $\p \mid \k$. If $\p \mid (d_1d_2)$ then 
$$ (d^n_1 - d_3, d^n_2 -d_3) \not\subseteq \p.$$
Then, the set of $n \in \N$ such that 
$$  (b(d^n_1 -d_3 ) - a(d^n_2 - d_3)) \in \p^{v_\p(\k)} (d^n_1  -d_3, d^n_2 -d_3) $$
is the same as the set of $n \in \N$ such that 
$$ (b(d^n_1 - d_3) - a(d^n_2 -d_3)) 
 \in \p^{v_\p(\k)},$$
which is a finite union of arithmetic progressions.

Now, we suppose that $(d_1d_2) \not\subseteq \p$. We denote $d_1 \coloneqq \mu_1 \omega_1$ and $d_2 \coloneqq \mu_2 \omega_2$, where $\mu_1$, $\mu_2$ are roots of unity and $\omega_1, \omega_2 \in 1 + \p\O_\p$. Then, we denote 
\[L_1 \coloneqq \{ n \in\N : \mu^n_1 = d_3\},\]
\[L_2 \coloneqq \{n \in \N : \mu_2^n = d_3\},\]
and note that they are both finite unions of arithmetic progressions.

Then, note that the set of $n \in \N \setminus (L_1 \cap L_2)$ such that 
$$ (b(d^n_1 - d_3) - a(d^n_2 -d_3)) \in \p^{v_\p(\k)} (d^n_1 -d_3, d^n_2 - d_3) $$
is the set 
\[\{ n\in \N \setminus (L_1 \cap L_2) :  (b(d^n_1 - d_3) - a(d^n_2 -d_3)) \in \p^{v_\p(\k)}\},\]
which is a finite union of arithmetic progressions.

Now, we left to show that the set \[ \{ n \in L_1 \cap L_2 : (b(d^n_1 - d_3) - a(d^n_2 -d_3)) \in \p^{v_\p(\k)} (d^n_1 -d_3, d^n_2 - d_3)\}\]
is also a finite union of arithmetic progressions.

Note that for any $n \in L_1 \cap L_2$, we have 
\[d^n_1 = d_3 \omega_1^n,\]
\[d^n_2 = d_3 \omega_2^n.\]
Hence, 
\begin{align}
   \{ n \in L_1 \cap L_2 : (b(d^n_1 - d_3) - a(d^n_2 -d_3)) \in \p^{v_\p(\k)} (d^n_1 -d_3, d^n_2 - d_3)\} \nonumber \\
    = \{ n \in L_1 \cap L_2 : (b(\omega^n_1 - 1) - a(\omega^n_2 -1)) \in \p^{v_\p(\k)} (\omega^n_1 -1, \omega^n_2 - 1)\}.
\end{align}
Then, it is enough to show the set 
\[ \{ n \in \N : (b(\omega^n_1 - 1) - a(\omega^n_2 -1)) \in \p^{v_\p(\k)} (\omega^n_1 -1, \omega^n_2 - 1)\}\]
is a finite union of arithmetic progressions.

Let $m_i$ denote the smallest positive integer such that 
$$ v_\p(\omega^{m_i}_i -1) > l$$
for $i \in \{1,2\}$, where $l = [K : \Q]$.
Then we have that for any $n \in \N^+$ such that $m_i | n$, 
\begin{align}
    v_\p(\omega^{n}_i-1) = v_\p(n/m_i (\omega^{m_i}_i -1) + \frac{n/m_i(n/m_i-1)}{2}(\omega^{m_i}_i -1)^2 + \cdots) \nonumber \\= v_\p(n) - v_\p(m_i) + v_\p(\omega^{m_i}_i-1).
\end{align}
On the other hand, for any $n \in \N^+$ such that there exists a $j \in \{1,2, \dots, m_i-1\}$ satisfying that $m_i \mid (n - j)$, we have 
\begin{equation}\label{eq: bounded-valuation-root-of-unity-1}
    v_\p(\omega^{n}_i-1) = v_\p(\omega^{n}_i - \omega^{(n-j)}_i + \omega^{(n-j)}_i - 1) = v_\p(\omega^{(n-j)}_i(\omega^{j}_i -1) + \omega^{(n-j)}_i -1) = v_\p(\omega^{j}_i -1).
\end{equation}

Let us denote $m = \lcm(m_1, m_2)$. Without loss of generality, we assume 
$$ v_\p(\omega^{m_1}_1 - 1) - v_\p(m_1) \leq v_\p(\omega^{m_2}_2 - 1) - v_\p(m_2).$$

Let us denote $$ L' \coloneqq \{sm : s \in \N\}.$$Then we first show that the set   
$$ \{n \in L' : b(\omega^{n}_1 - 1) - a(\omega^{n}_2 -1) \in \p^{v_\p(\k) + v_\p(n) - v_\p(m_1) + v_\p(\omega^{m_1}_1-1)}\}$$
is a finite union of arithmetic progressions. Note that there exists a $l_3 \in \N$ such that for any $l_4 \geq  l_3$, we have 
\begin{align}
    l_4v_\p(\omega^{m_1}_1-1) + v_\p\left( \binom{n/m_1}{l_4}\right) \nonumber\\= l_4v_\p(\omega^{m_1}_1-1) + v_\p(n) -v_\p(m_1) + v_\p\left( \binom{n/m_1 -1}{l_4-1}\right) - v_\p(l_4)  \nonumber\\\geq v_\p(\k) + v_\p(n) - v_\p(m_1) + v_\p(\omega^{m_1}_1-1).
\end{align}
Thus, it is equivalent to show the set 

\[
\begin{aligned}
\Bigl\{ n \in L' :\;&
b\!\left(
   \sum_{j=1}^{l_3-1}
   \binom{n/m_1}{j}
   (\omega_1^{m_1}-1)^j
\right)  \\
&\;-\;
a\!\left(
   \sum_{j=1}^{l_3-1}
   \binom{n/m_2}{j}
   (\omega_2^{m_2}-1)^j
\right)
\in
\mathfrak{p}^{\,v_{\mathfrak p}(\kappa)
  - v_{\mathfrak p}(m_1)
  + v_{\mathfrak p}(\omega_1^{m_1}-1)}
\Bigr\}.
\end{aligned}
\]

is a finite union of arithmetic progressions. This is true as 
$$b\left( \sum^{l_3-1}_{j = 1} \binom{n/m_1}{j}(\omega^{m_1}_1 -1)^j \right) - a\left( \sum^{l_3-1}_{j = 1} \binom{n/m_2}{j}(\omega^{m_2}_2 -1)^j \right) $$
is a polynomial on $n$.

Now suppose $n \notin L'$, then by Equation (\ref{eq: bounded-valuation-root-of-unity-1}) there exists a $l_2 \in \N$ such that 
$$ v_\p((\omega^{n}_1 - 1 , \omega^{n}_2 -1)) \leq l_2.$$
For each $i \in \{0,1, \dots, l_2\}$, let $$S'_i \coloneqq \{n\in L'^{c} : v_\p((\omega^{n}_1 - 1 , \omega^{n}_2 -1)) = i\}$$
which is a finite union of arithmetic progressions for each $i$. 

Then the set 
$$ S_i \coloneqq \{n \in S'_i : b(\omega^{n}_1 - 1) - a(\omega^{n}_2 -1) \in \p^{v_\p(\k) + i} \}$$
is also a finite union of arithmetic progressions for each $i \in \{0,1,\dots, l_2\}$. Thus, we have 
$$ \{ n \in L'^c : (b(\omega^n_1 - 1) - a(\omega^n_2 -1)) \in \p^{v_\p(\k)} (\omega^n_1 -1, \omega^n_2 - 1)\} = \bigcup^{l_2}_{i = 0}S_i$$
is a finite union of arithmetic progressions as well. \\

{\bf Case \RNum{3}:} Lastly, we handle the case that $d_3 = 0$. In this case 
$$ S \coloneqq \{n \in \N: bd^n_1 - ad^n_2 \in \k (d^n_1, d^n_2)\} = \bigcap_{\p \mid \k} \{n \in \N : bd^n_1 -ad^n_2  \in \p^{v_\p(\k) + v_\p((d^n_1, d^n_2))}\}.$$

Fix a $\p \mid \k$. If $(d_1,d_2) \subseteq \p$, then we can write $d_i = \mu_i\omega$, where $i \in \{1,2\}$, for a $\omega \in \p^{v_\p((d_1,d_2))}$ where at least one of $\mu_1,\mu_2$ is in $ \O^*_\p$. Then we have 
$$\{n \in \N : bd^n_1 -ad^n_2  \in \p^{v_\p(\k) + v_\p((d^n_1, d^n_2))}\} = \{n \in \N: b \mu_1^n - a\mu_2^n \in \p^{v_\p(\k)}\} $$
is a finite union of arithmetic progressions. 
If $(d_1, d_2) \not\subseteq \p$, then 
$$  \{n \in \N : bd^n_1 -ad^n_2  \in \p^{v_\p(\k) + v_\p((d^n_1, d^n_2))}\} = \{n \in \N : bd^n_1 - a d^n_2 \in \p^{v_\p(\k)}\}$$
is also a finite union of arithmetic progressions. 
Thus, we conclude that $S$ is a finite union of arithmetic progressions. 

\end{proof}

As a direct consequence, we answer this question of common solutions for a system of equations involving power maps: 

\begin{cor}\label{prop: poly-power-arith}
    Let $a$, $b$, $c_1$, $c_2$ and $d_3$ be non-negative integers and $\xi$ a root of unity of order $m$. Let $d_1$ and $d_2$ be two positive integers greater than $1$. Then the set of positive integers $n$ such that 
    $$ (\xi^ax^{d_1})^{\circ n} = \xi^{c_1} x^{d_3}  $$
   $$ (\xi^bx^{d_2})^{\circ n} = \xi^{c_2} x^{d_3}$$
    admit solutions in $\C^*$ is a finite union of arithmetic progressions.
\end{cor}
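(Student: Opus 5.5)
First compute the iterates explicitly. For $\phi(x)=\xi^a x^{d_1}$ one gets $\phi^{\circ n}(x)=\xi^{a(1+d_1+\cdots+d_1^{n-1})}x^{d_1^n}$, and similarly for $\psi(x)=\xi^b x^{d_2}$. The system then becomes
\[
x^{d_1^n-d_3}=\xi^{c_1-a\frac{d_1^n-1}{d_1-1}},\qquad x^{d_2^n-d_3}=\xi^{c_2-b\frac{d_2^n-1}{d_2-1}}
\]
for $x\in\C^*$. The right-hand sides depend on $n$ only through $A_n:=a(d_1^n-1)/(d_1-1)\bmod m$ and $B_n:=b(d_2^n-1)/(d_2-1)\bmod m$. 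The sequence $d_i^n$ is eventually periodic modulo $m(d_i-1)$, so $A_n$ and $B_n$ are eventually periodic modulo $m$. After discarding finitely many $n$ (which alter the set only by a finite set), we may therefore split $\N$ into finitely many residue classes $n\equiv r \pmod{N}$ on which the exponents $a_r:=c_1-A_n$ and $b_r:=c_2-B_n$ are constant residues in $[0,m)$. Discarding finitely many $n$ also ensures $\min\{d_1^n,d_2^n\}>d_3$, so Lemma~\ref{lem: power-reduce-to-gcd} applies with $k=m$ and $d_4=d_3$.

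On each class, Lemma~\ref{lem: power-reduce-to-gcd} says solvability is equivalent to
\[
m\cdot\gcd(d_1^n-d_3,\,d_2^n-d_3)\ \mid\ b_r(d_1^n-d_3)-a_r(d_2^n-d_3).
\]
This is precisely the condition of Proposition~\ref{prop: gcd-arithmetic-progressions} with $K=\Q$, $\O=\Z$, $\k=(m)$, using $(a_r,b_r)$ in place of $(a,b)$. In $\Z$ the ideal $(u,v)$ equals $(\gcd(u,v))$, so ideal membership in $\k(u,v)$ is the same as divisibility by $m\gcd(u,v)$. Proposition~\ref{prop: gcd-arithmetic-progressions} then gives that the set of $n$ satisfying this condition is a finite union of arithmetic progressions.

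Intersecting that set with the residue class $r+N\N$ keeps it a finite union of arithmetic progressions. Taking the union over the finitely many classes $r$ and adding back the finitely many discarded small $n$ (each a trivial progression) gives the claim. The only point needing care is the bookkeeping in the first step: verifying that the root-of-unity exponent is eventually periodic in $n$. This holds because $(d^n-1)/(d-1)\bmod m$ is determined by $d^n\bmod m(d-1)$. Everything else is a direct invocation of the two preceding results.
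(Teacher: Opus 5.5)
Your proposal is correct and follows essentially the same route as the paper. You first reduce the root-of-unity exponents $a(d_1^n-1)/(d_1-1)$ and $b(d_2^n-1)/(d_2-1)$ to eventually periodic residues mod $m$; the paper phrases this via the finite orbit of $x\mapsto d_1x+a$ on $\Z/m\Z$. Then, for large $n$, you apply Lemma~\ref{lem: power-reduce-to-gcd} with $k=m$, $d_4=d_3$, and Proposition~\ref{prop: gcd-arithmetic-progressions} with $K=\Q$, $\k=(m)$, exactly as the paper does. Your matching of $(a_r,b_r)$ to $d_2^n-d_3$ and $d_1^n-d_3$ is actually cleaner than the divisibility condition displayed in the paper's proof, which has the indices mixed up.
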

\begin{proof}
    Note that $\Orb_{d_1 x + a}(a) \subseteq \Z/m\Z$ is finite. Therefore, for any $a' \in \Z/m\Z$, the set of $n$'s such that $$(d_1 x + a)^{\circ n}(a) \equiv a' \pmod{m}$$
    is a finite union of arithmetic progressions. Similarly, for any $b' \in \Z/m\Z$, the set of $n$ such that $(d_2x + b)^{\circ n}(b) \equiv b' \pmod{m}$ is a finite union of arithmetic progressions. 

    Now, for any fixed $a', b' \in \Z /m \Z$, we will show the set of $n \in \N$ such that 
    \begin{equation}\label{eq: power-mod-m-eq-1}
         \xi^{a'} x^{d^n_1}   =  \xi^{c_1}x^{d_3}
    \end{equation}
    \begin{equation}\label{eq: power-mod-m-eq-2}
            \xi^{b'} x^{d_2^n} = \xi^{c_2}x^{d_3}
        \end{equation}
    admits solutions in $\C^*$ is a finite union of arithmetic progressions. With this claim, we can conclude that the set of $n$'s such that $$ (\xi^ax^{d_1})^{\circ n} = \xi^{c_1} x^{d_3}  $$
   $$ (\xi^bx^{d_2})^{\circ n} = \xi^{c_2} x^{d_3}$$
    admit solutions in $\C^*$ is a finite intersection of finite unions of arithmetic progressions, which is again a finite union of arithmetic progressions.

    To show the claim, we note that Equations (\ref{eq: power-mod-m-eq-1}) and (\ref{eq: power-mod-m-eq-2}) admit solution for a large enough $n$ such that $\min\{d^n_1, d^n_2\} > d_3$ is equivalent to
    \begin{equation}\label{eq: power-common-gcd-1}
        x^{d^n_1 -d_3} = \xi^{c_1-a'}
    \end{equation}
    \begin{equation}\label{eq: power-common-gcd-2}
        x^{d^n_2 - d_3} = \xi^{c_2-b'}
    \end{equation}
    
     admit common solutions in $\C^*$. By Lemma \ref{lem: power-reduce-to-gcd}, the set of $n \in \N$ such that 
     Equations (\ref{eq: power-common-gcd-1}) and (\ref{eq: power-common-gcd-2}) admit common solutions is 
     $$\{n \in \N : m \gcd(d^n_1-d_3, d^n_2 -d_3) \mid (c_1-b')(d^n_1 -d_3) - (c_2- a')(d^n_1 -d_3) \}.$$ Then Proposition \ref{prop: gcd-arithmetic-progressions} implies that this set of $n \in \N$ is a finite union of arithmetic progressions.
    
\end{proof}

\subsection{Special case of Question \ref{qu: DML-varieties} when $f$, $g$ and $c$ are polynomials}
In this subsection, we will show that the set of natural numbers $n$ such that 
$$ f^n(x)=g^n(x)=c(x)$$
admits a solution in $\A^1_\C$ is a finite union of arithmetic progressions. 

Note that when at least one of $f$ and $g$ is an automorphism, it is direct consequence of results in \cite{HT17}, Theorem \ref{thm: group-GDML} and Remark \ref{rmk: non-projective-GDML} (we will mention this again in the proof of the main Theorem in this subsection). So, for the main body of this subsection, we will discuss the case when both of $f$ and $g$ are of degree greater than $1$.

We first handle the relatively easier case when $\deg(f) = \deg(g)$.
\begin{prop}\label{prop: degree-eq-n-case}
  Suppose $f$ and $g$ are polynomials of positive degree $d > 1$ over $\C$ and $c$ is a polynomial over $\C$. The set of $n \in \N$ such that 
  $$ f^n(x) = g^n(x) = c(x)$$
  admits solutions is a finite union of arithmetic progressions.
\end{prop}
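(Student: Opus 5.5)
The plan is to split according to whether $f$ and $g$ share the same Julia set, equivalently (by Schmidt--Steinmetz \cite{SS95}) the same set of preperiodic points. Write $N$ for the set of $n$ such that $f^n(x)=g^n(x)=c(x)$ has a solution.

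\textbf{Case A: $J(f)\neq J(g)$.} Here I expect finiteness of the set of solutions $\lambda$ as $n$ varies, except in degenerate situations. If there are infinitely many $\lambda$ with $f^n(\lambda)=g^n(\lambda)=c(\lambda)$ for some $n$, then the Hsia--Tucker/NZ25 style argument (equidistribution of points with $f^n(\lambda)=c(\lambda)$ toward the $f$-equilibrium measure, and likewise for $g$) should force $\mu_f=\mu_g$, hence $J(f)=J(g)$. Conversely, if only finitely many $\lambda$ occur, then each $\lambda$ contributes the set $\{n: f^n(\lambda)=g^n(\lambda)=c(\lambda)\}$. This set is the return set of the point $(\lambda,\lambda)$ under $(f,g)$ to the point $(c(\lambda),c(\lambda))$, which is an arithmetic progression, a singleton, or empty, since it is a preperiodicity condition. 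A finite union of such sets is fine.

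\textbf{Case B: $J(f)=J(g)$.} By \cite{SS95}, after a common affine conjugation one of two things happens.

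1. $f$ and $g$ are both, up to symmetries of $J$, monomials or Chebyshev polynomials. The Chebyshev case reduces to the power-map case via the semiconjugacy $x+x^{-1}$. In the power-map case, with $\deg f=\deg g=d$, we have $f=\xi^a x^d$ and $g=\xi^b x^d$ with $\xi$ a root of unity. Since $c$ must land in the torus picture, we also need $c$ to be a monomial. I expect that for non-monomial $c$ one can instead reduce to finitely many $\lambda$ as in Case A. When $c=\xi^{c_1}x^{d_3}$, Corollary \ref{prop: poly-power-arith}, with $d_1=d_2=d$, applies directly. Solutions at $x=0$ contribute either all $n$ or none.

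2. Otherwise there exist $h$ of degree $\ge 2$ and symmetries $\sigma,\tau$ of $J$ (a finite cyclic group of rotations about the common centre) with $f=\sigma h^{k}$ and $g=\tau h^{k}$; equal degrees give the same $k$. Since $h$ commutes with the symmetry group up to a twist ($h\circ\sigma=\sigma^{e}\circ h$), one gets $f^n=\sigma_n h^{kn}$ and $g^n=\tau_n h^{kn}$, where $(\sigma_n,\tau_n)$ lies in a finite group and is eventually periodic in $n$. Then the condition $f^n(x)=g^n(x)$ forces either $\sigma_n=\tau_n$, or $h^{kn}(x)$ to be a fixed point of $\sigma_n^{-1}\tau_n$, which is the centre $z_0$. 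In the first sub-case $N$ restricted to that residue class is governed by $\sigma_n h^{kn}(x)=c(x)$, which always has a solution unless $c$ equals $\sigma_n h^{kn}$ plus a constant; I would check that this is a single algebraic condition, eventually periodic in $n$. In the second sub-case we need $h^{kn}(x)=z_0$ and $c(x)=\sigma_n(z_0)=z_0$. The roots of $c-z_0$ are finitely many, and for each one the set of $n$ with $h^{kn}(\lambda)=z_0$ is an arithmetic progression (or eventually so), by preperiodicity.

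The main obstacle I expect is Case A: making rigorous that infinitely many $\lambda$ force $\mu_f=\mu_g$ when $c$ is arbitrary and $m=n$. For this I would invoke the finiteness result of \cite{HT17}/\cite{NZ25} in its strong form, which gives finitely many $\lambda$ unless $f,g$ are compositionally dependent. Compositional dependence, combined with equal degree and \cite{SS95}, lands us in Case B.
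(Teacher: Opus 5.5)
Your overall plan matches the paper's. You show that infinitely many $\lambda$ force $J(f)=J(g)$; the paper likewise imports the equidistribution argument of Noytaptim--Zhong, plus Baker--DeMarco, after noting that $f^m=g^m$ gives this directly. You then use Schmidt--Steinmetz to write $f^n=\sigma_n h^{kn}$, $g^n=\tau_n h^{kn}$ with eventually periodic twists. Finally you separate the $n$ with trivial relative twist from those where $h^{kn}(x)$ must be the fixed point of the twist. Your subcase 2 is essentially the paper's Cases I and II.

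\textbf{The gap is in subcase 1.} You claim that $c$ ``must land in the torus picture'', i.e.\ be a monomial, and that a non-monomial $c$ leaves only finitely many $\lambda$. This is false. Take $f=g=x^2$ and $c=x+1$. The roots of $x^{2^n}-x-1$, $n\ge 1$, form an infinite set: a common root for two values of $n$ would be $0$ or a root of unity, and none of these is a root. Every $n$ lies in the set.

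The Manin--Mumford reduction works only when the common solutions are forced to be preperiodic. That is what happens for \emph{unequal} degrees (Proposition~\ref{prop: poly-infinite-sol-prep}), and Corollary~\ref{prop: poly-power-arith} is likewise only needed there. With equal degrees, on the $n$ where $f^n=g^n$, the solutions of $f^n(x)=c(x)$ are arbitrary. Moreover, $J(f)=J(g)$ equal to the unit circle only gives $f=\alpha x^d$, $g=\beta x^d$ with $|\alpha|=|\beta|=1$, not root-of-unity coefficients.

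\textbf{The fix} is to run your subcase-2 argument for power maps as well, which is what the paper does. Write $f^n=\alpha_n x^{d^n}$ and $g^n=\beta_n x^{d^n}$, with $\alpha_n/\beta_n=(\alpha/\beta)^{(d^n-1)/(d-1)}$.
\begin{itemize}
\item If $\alpha_n\neq\beta_n$, then $f^n(x)=g^n(x)$ forces $x=0$, so $n$ qualifies iff $c(0)=0$.
\item If $\alpha_n=\beta_n$, then $n$ qualifies unless $f^n-c$ is a nonzero constant, which by degree happens for at most one $n$.
\item The set $\{n:\alpha_n=\beta_n\}$ is empty or eventually periodic, because $(d^n-1)/(d-1)$ is eventually periodic modulo the order of $\alpha/\beta$.
\end{itemize}
For Chebyshev maps, use the twist group $\{\pm x\}$. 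A nontrivial twist forces $T_{d^n}(x)=0=c(x)$, which is exactly your subcase-2 situation with $z_0=0$.

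\textbf{Smaller repairs.}
\begin{itemize}
\item In subcase 2, the exception ``$c=\sigma_n h^{kn}+\text{const}$'' also occurs for at most one $n$ by degree. It is not an eventually periodic condition.
\item When $\sigma_n\neq\tau_n$ you must allow $c\equiv z_0$. Then every $n$ in that residue class qualifies, since $h^{kn}(x)=z_0$ is always solvable (the paper's $S_{y_l}=\mathbb{N}$).
\item Your fallback via Hsia--Tucker needs $c$ not to be an iterate of $f$ or $g$. It also needs a separate input that compositional dependence implies $J(f)=J(g)$.
\item The equidistribution route alone suffices. In Case A, infinitely many $\lambda$ at one fixed $n$ would give $f^n=g^n$, hence $J(f)=J(g)$. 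So $n\to\infty$ along the $\lambda$'s, and their heights tend to $0$, as the argument requires.
\end{itemize}
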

\begin{proof}
    If there are only finitely many $\lambda \in \C$ such that 
    $$ f^n(\lambda) = g^n(\lambda) = c(\lambda)$$
    for some $n \in \N$, then obviously the set of $n \in \N$ such that the equation admits solution is a finite union of arithmetic progressions. Thus, from now on, we assumed that there are infinitely many such $\lambda \in \C$ making the equation holds for some $n \in \N$.\\
    
    Now, if there exists a $m \in \N^+$ so that $f^m(x) \equiv g^m(x)$, then we immediately have $J(f) = J(g) = J$. Suppose there doesn't exist such a $m$. Then the same argument applying the equidistribution used in the proof of \cite[Propositions 4.2 and 4.3]{NZ25} will give us that $\Prep(f) = \Prep(g)$. Then by \cite[Corollary 1.3]{BD11}, we also have $J(f) = J(g) = J$. \\
    
    \textbf{Case \RNum{1}:} We first suppose that there doesn't exist $m \in \N^+$ such that $f^m = g^m$. Suppose $f$ and $g$ are not conjugated to power maps or Chebyshev polynomials. Then, by \cite{SS95}, there exists $\sigma_1 \in \Aut(J)$, where 
    $$ \Aut(J)\coloneqq \{ \sigma \in \Aut(\A^1) : \sigma (J) = J\},$$
    which is a finite group, such that 
    $$ f  = \sigma_1 \circ g.$$
    
    Then, for any $m \in \N$, we have that $$\sigma^{(d^m-1)/(d-1)}_{1} \circ g^{m} = f^m  \neq g^m ,$$
    which implies $\sigma^{(d^m-1)/(d-1)}_{1} \neq \id$. Then the assumption implies that, by the pigeonhole principle, there exists infinitely many $n \in \N$ such that $$\sigma^{(d^n-1)/(d-1)}_{1} = \sigma'_1$$ for some $\sigma'_1 \in \Aut(J)$ and, moreover, there are infinitely many distinct $x_n \in \C$ satisfying 
    $$ \sigma'_1 \circ g^{n}(x_n) =  g^{n}(x_n) = c(x_n).$$

    Since $\sigma'_1 \neq \id$, there is an unique $y \in \C$ such that $\sigma'_1(y) = y$. Hence, by the pigeonhole principle again, we have that there is a $y \in \C$ and infinitely many distinct $x_n \in \C$ such that 
    $c(x_n) = \sigma_1'(y)$. This implies $c$ is constant. 

    Then, note that the set of $n \in \N$ such that 
    $$ f^n(x) = \sigma^{(d^n-1)/(d-1)}_1 \circ g^n(x) = g^n(x)=c$$
    admits solution is exactly the set of $n \in \N$ such that
    $$ \sigma^{(d^n-1)/(d-1)}_1(y) = y = c $$
    holds.

    It is obvious that such a set of $n \in \N$ is a finite union of arithmetic progressions.\\

    Now, suppose $f$ is conjugated to a power map and so is $g$ as they share the same Julia set and preperiodic points. Then, after a change of coordinates, our assumption implies that there exist infinitely many $x_m \in \C$ such that 
    $$ \left(\xi_1x^d\right)^{\circ m}(x_m) = (x_m)^{d^m} = c(x_m)$$
    where $\xi_1$ is a root of unity. We may also suppose that for any $n \in \N^+$, 
    $$ \xi_1^{( d^n -1)/(d - 1)} \neq 1,$$ as otherwise $f^m = g^m$ for some $m \in \N^+$, contradicting our assumption.

    Then this implies that there are infinitely many $x_m \in \C$ such that $$\left(\xi_1x^d\right)^{\circ m}(x_m) = (x_m)^{d^m} = 0 = c(x_m).$$ 
    However, this is impossible as the only value of $y$ that can make $y^{d^m} = 0$ is $0$ which contradicts that there are infinitely many $x_m \in \C$ such that $(x_m)^{d^m} = 0$.\\

    \textbf{Case \RNum{2}:} Now, suppose there exists an $m \in \N^+$ such that $f^m = g^m$. We may assume that $m$ is the smallest such positive integer. Then obviously for any $$n \in \{mk : k \in \N\},$$ except potentially a single $n'$ which makes $f^{n'} = g^{n'}$ equal to a non-zero translate of $c(x)$, we have 
    $$ f^n(x) = g^n(x) = c(x)$$
    admits solutions. Note that the set $\{mk : k \in \N\}$ excluding potentially this single $n'$ is again a finite union of arithmetic progressions. \\
    
    Also, by the same reasoning of above, $J(g) = J(f) = J$ and there exists a $\sigma \in \Aut(J)$ such that 
    $$ f = \sigma \circ g.$$

    For any $l \in \{1, 2, \dots, m-1\}$, we have 
    $$ f^{nm + l}(x) = g^{nm + l}(x) = c(x)$$
    is equivalent to
    $$ \sigma_{l} \circ g^{mn + l}(x) = g^{mn + l}(x) = c(x)$$
    for all $n \in \N$, where $\sigma_{l}\in \Aut(J) $ and it is not the identity map.
    Note that for each $\sigma_{l}$ there exists a unique $y_l \in \C$ such that $\sigma(y_l) = y_l$. 
    For each such $y_l$, either there are only finitely many $x_0 \in \C$ such that $c(x_0) = \sigma_{l}(y_l) = y_l$ or $c \equiv \sigma_{l}(y_l) = y_l$ and for any $n \in \N$, there exists a $x_n \in \C$ such that $g^{mn+l}(x_n) = y_l$. The first case implies that the set 
    \begin{align*}
        S_{y_l} &\coloneqq \left\{ n \in \N : \exists x_0 \in \C : f^{nm+l}(x_0) = g^{nm+ l}(x_0) = c(x_0) = \sigma_{l}(y_l)\right\} \\
       & = \bigcup_{x_0 \in \C: c(x_0) = \sigma_l(y_l)}\{n \in \N : f^{nm+l}(x_0) = g^{nm+ l}(x_0) = c(x_0)  \}    \end{align*} 
    is a finite union of arithmetic progressions. 
    The second case implies that $S_{y_l} = \N$.

    Thus, we have for any $l \in \{1,2, \dots, m-1\}$, the set of $n \in \N$ such that 
    $$ f^{nm+l}(x) = g^{nm + l}(x) = c(x)$$
    admits solutions is a finite union of arithmetic progressions. Then $$\bigcup_{1 \leq l \leq m-1} S_{y_l}$$ is the set of $n \in \N$ that we are looking for, which is agian a finite union of arithmetic progressions. This concludes the proof in this case.
\end{proof}

The above proposition conclude the case when $\deg(f) = \deg(g)$. Now, we begin to discuss the case when $\deg(f) \neq \deg(g)$. We first note that if there are infinitely many $\lambda$ make $f^n(x) = g^n(x) = c(x)$ hold for some $n \in \N^+$, then $f$, $g$ and $c$ must be strongly related to each other in the following way:
\begin{prop}\label{prop: poly-infinite-sol-prep}
    Suppose $f$ and $g$ are polynomials of degree greater than $1$ over $\C$ and $c$ is a polynomial over $\C$. Suppose $\deg(f) \neq \deg(g)$ and there are infinitely many $\lambda$ such that
    $$ f^n(\lambda) = g^n(\lambda) = c(\lambda)$$
    for some $n \in \N^+$.  Then $\Prep(f) = \Prep(g)$ and one of the following holds:
    \begin{enumerate}
        \item $\Prep(c) = \Prep(f) = \Prep(g)$;
        \item $c$ is a linear automorphism of the Julia set of $f$;
        \item $c$ is a constant and is preperiodic under $f$ and $g$.
    \end{enumerate}
\end{prop}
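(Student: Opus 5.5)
The plan is to use arithmetic equidistribution (Yuan's theorem / Baker--Rumely, Favre--Rivera-Letelier) together with a specialization argument, as in the proofs of \cite[Propositions 4.2 and 4.3]{NZ25}. First, reduce to the case where $f,g,c$ are defined over $\overline{\Q}$. The coefficients lie in a finitely generated field, and one can specialize by a standard argument, or else work directly with heights over function fields. The infinitely many solutions $\lambda_i$ then give points with $f^{n_i}(\lambda_i)=g^{n_i}(\lambda_i)=c(\lambda_i)$. Since each equation $f^n=c$ has finitely many roots (unless $c=f^n$; that degenerate case is excluded since $\deg f\neq\deg g$ forces $c\ne g^n$ simultaneously, and only one $n$ could work), we get $n_i\to\infty$.

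Next we estimate heights. From $f^{n}(\lambda)=c(\lambda)$ we have $\hat h_f(f^n(\lambda))=d_f^n\hat h_f(\lambda)$, while $\hat h_f(c(\lambda))\le \deg(c)\,\hat h_f(\lambda)+O(1)$, because $\hat h_f$, $\hat h_g$ and the naive height $h$ differ by bounded amounts. Hence $(d_f^{n}-\deg c)\,h(\lambda)=O(1)$, so $h(\lambda_i)\to 0$. The same applies with $g$, so the $\lambda_i$ form a sequence of small height for both $f$ and $g$. The Galois orbits of the $\lambda_i$ are also solutions, and these are $\hat h_f$- and $\hat h_g$-small generic sequences. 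Equidistribution at every place then gives $\mu_f=\mu_g$ at all places, hence $\hat h_f=\hat h_g$, and so $\Prep(f)=\Prep(g)$, e.g.\ via \cite{BD11}. Call the common Julia set $J$.

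For the trichotomy on $c$, I will use that the points $c(\lambda_i)=f^{n_i}(\lambda_i)$ also have height tending to $0$, since $\hat h_f(f^{n}\lambda)=d_f^n\hat h_f(\lambda)$ and $\hat h_f(f^n\lambda)=\hat h_f(c\lambda)$. More precisely, $\hat h_f(c(\lambda_i))=d_f^{n_i}\hat h_f(\lambda_i)$ together with the bound $\le \deg(c)\hat h_f(\lambda_i)+O(1)$ forces $\hat h_f(\lambda_i)\le O(1)/(d_f^{n_i}-\deg c)$, which gives smallness of $c(\lambda_i)$ as well. If $c$ is constant, then $c$ is a limit of small points; indeed $\hat h_f(c)=d_f^{n_i}\hat h_f(\lambda_i)$ and also $\hat h_f(c)=\hat h_g(c)$, and comparing the two growth rates $d_f^{n}$ versus $d_g^{n}$ with $d_f\ne d_g$ forces $\hat h_f(c)=0$, so $c$ is preperiodic; this is case (3). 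If $c$ is nonconstant, then $\{\lambda_i\}$ and $\{c(\lambda_i)\}$ both equidistribute to $\mu_f$, which gives $c^*\mu_f=\deg(c)\,\mu_f$ at all places. If $\deg c\ge 2$, then $c$ has the same measure of maximal entropy as $f$, so $\Prep(c)=\Prep(f)$; this is case (1). If $\deg c=1$, then $c$ preserves $\mu_f$, hence $J$, and is a linear automorphism of the Julia set; this is case (2).

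The main obstacle is the passage from $c^*\mu_f=\deg(c)\mu_f$ to $\Prep(c)=\Prep(f)$ for nonconstant $c$ of degree $\ge2$. It requires knowing that $c(\lambda_i)$ equidistributes for the canonical height of $f$, and then invoking the rigidity result that polynomials sharing a measure of maximal entropy share preperiodic points (\cite{BD11} combined with \cite{SS95}). I would attempt this via the height identity $\hat h_f\circ c=\deg(c)\hat h_f$ on $\overline{\Q}$, derived from equality of measures at all places. The transcendental-coefficient specialization also needs care, but it should follow the template in \cite{NZ25}.
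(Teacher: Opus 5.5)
Your route differs from the paper's and can be completed over $\overline{\Q}$, but the non-constant case has a gap as written.

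\textbf{The gap.} The inequality $d_f^{n_i}\hat h_f(\lambda_i)=\hat h_f(c(\lambda_i))\le \deg(c)\,\hat h_f(\lambda_i)+O(1)$ only shows that $\hat h_f(c(\lambda_i))$ stays bounded, not that it tends to $0$. The error term is $\hat h_f(c(\lambda_i))-\deg(c)\hat h_f(\lambda_i)$, and there is no reason for it to tend to $0$ unless $\hat h_f\circ c=\deg(c)\,\hat h_f$, which is essentially what you are trying to prove.

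The repair is already in your constant case, whose argument never uses that $c$ is constant. Since $c(\lambda_i)=f^{n_i}(\lambda_i)=g^{n_i}(\lambda_i)$ and $\hat h_f=\hat h_g=:\hat h$, you get $d_f^{n_i}\hat h(\lambda_i)=d_g^{n_i}\hat h(\lambda_i)$. Hence $\hat h(\lambda_i)=0$, so every solution $\lambda_i$ and every $c(\lambda_i)$ is preperiodic.

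\textbf{A second flaw.} Knowing that $\{\lambda_i\}$ and $\{c(\lambda_i)\}$ both equidistribute to $\mu_f$ gives only $c_*\mu_f=\mu_f$. That is weaker than $c^*\mu_f=\deg(c)\,\mu_f$. To get the latter, apply equidistribution to $\{\lambda_i\}$ for the pulled-back metrized bundle $c^*\overline{\mathcal{O}(1)}_f$. Its height $\hat h_f\circ c$ vanishes on the $\lambda_i$, and its limit measure is $\deg(c)^{-1}c^*\mu_f$.

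\textbf{Your stated main obstacle is easy.} It needs only the archimedean place. For $e=\deg c\ge 2$, the identity $c^*\mu_f=e\mu_f$ gives $\mu_f=e^{-n}(c^n)^*\mu_f$ for every $n$. The right side tends to $\mu_c$, because $\mu_f$ has no atoms and so does not charge the exceptional set of $c$. Hence $\mu_f=\mu_c$, so $J(c)=J(f)$, and \cite[Corollary 1.3]{BD11} yields $\Prep(c)=\Prep(f)$. For $e=1$, $c$ preserves $\operatorname{supp}\mu_f=J$, which is case (2).

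\textbf{Comparison with the paper.} The paper gets preperiodicity of the solutions from the Schmidt--Steinmetz decomposition $f=\xi_1h^{k_1}$, $g=\xi_2h^{k_2}$ with $k_1\neq k_2$, which gives $h^{(k_2-k_1)m}(y_m)=\xi_3y_m$. It then classifies $c$ by applying the dynamical Manin--Mumford theorem \cite{GNY19} to the curve $y=c(x)$ under $(h,h)$, together with Moriwaki's Northcott property and \cite{BD11}. Power and Chebyshev maps are handled separately via classical Manin--Mumford. Once $\Prep(f)=\Prep(g)$ is known, that argument runs over $\C$.

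Your repaired argument is uniform: it has no exceptional/non-exceptional split and does not need \cite{GNY19}. The cost is that it needs Northcott-type canonical heights and arithmetic equidistribution in the trichotomy step too. So the passage from $\C$ to $\overline{\Q}$ is a real burden in two places for you, not only in proving $\Prep(f)=\Prep(g)$. If you instead use heights over finitely generated fields, you would also have to establish $\hat h_f=\hat h_g$ in that setting.
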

\begin{proof}

    Similarly as explained in the proof of Proposition \ref{prop: degree-eq-n-case}, the assumption implies that $J = J(f) = J(g)$ and $\Prep(f) = \Prep(g)$.\\

     We first suppose that $f$ is not exceptional and so is $g$. Then, by \cite{SS95}, after a coordinate change, we have 
    $$ f(x) = \xi_1h^{k_1}$$
    $$ g(x)=  \xi_2 h^{k_2}$$
    where $k_1$ and $k_2$ are positive integers, $h = x^rR(x^s)$ for some $r \in \N$, $s \in \N^+$ and a polynomial $R \in \C[x]$ such that $J(h) = J$ and $\xi_1$, $\xi_2$ are roots of unity of order dividing $s$. 

    Now, by the assumption we have that $k_1 \neq k_2$ and without loss of generality $k_1 < k_2$. Then for any $x_m$ such that 
    $f^m(x_m) = g^m(x_m) = c(x_m)$ for some $m \in \N$,
    we have that $y_m = h^{k_1m}(x_m)$ satisfies that 
    $$ h^{m(k_2 - k_1)}(y_m) = \xi_3 y_m$$
    where $\xi_3$ is of order dividing $s$. Therefore, 
    $$ \Orb_{h^{(k_2-k_1)m}}(y_m) \subseteq \{\xi y_m : \xi \text{ is a root of unity of order dividing }s\},$$
    which in particular implies that $y_m \in \Prep(h) = \Prep(f) = \Prep(g).$ Thus, $x_m \in \Prep(h) = \Prep(f) = \Prep(g)$. 

    Thus, our assmption implies that 
    $$ C \coloneqq\{(x,c(x)) : x \in \P^1\} \subseteq \P^1 \times \P^1$$
    contains infinitely many preperiodic points of $(h,h)$. Now, if $c$ is a constant, then $c \in \Prep(h) = \Prep(f) = \Prep(g)$. From now on, suppose $c$ is not a constant. By \cite[Theorem 1.5]{GNY19}, there exist a non-linear polynomial $p$ commuting with some iterate of $h$ and a linear polynomial $L$ also commuting with some iterate of $h$ such that 
    $$ p^l(x) = L \circ p^k(c(x))$$
    for some integer $l,k \geq 0$. 
    
    Let us take an arbitrary $x_0 \in \Prep(p)$. Let $K$ be a finitely generated field over $\Q$ where $x_0$, $p$, $L$ and $c$ are all defined over $K$. Then $\Prep(p)(K)$ is a finite set by the Northcott property of the height function introduced by Moriwaki \cite{Mor00}. Then we have $p^l(x_0) \in \Prep(p)(K)$ as well. Since $L$ commutes with an iterate of $h$ and also some iterate of $h$ commutes with $p$, we have $\Prep(h) = \Prep(p)$ and $$L(\Prep(h)) = \Prep(h) = L^{-1}(\Prep(h)).$$ Therefore, $p^k(c(x_0)) \in \Prep(p)$ and so $c(x_0) \in \Prep(p)(K)$. Repeating the same argument for any $y_0 \in \Orb_{c}(x_0)$ in place of $x_0$, we have $c(y_0) \in \Prep(p)(K)$ as well. Thus $\Orb_{c}(x_0) \subseteq \Prep(p)(K)$, which is a finite set. Therefore, $x_0 \in \Prep(c)(K)$. Since we pick $x_0 \in \Prep(p)$ arbitrarily, we have that $\Prep(p) \subseteq \Prep(c)$. Now, if $\deg(c) > 1$, then \cite[Theorem 1.2]{BD11} implies $\Prep(c) = \Prep(g) = \Prep(f)$. Suppose $\deg(c) = 1$. Note that the above implies that $$c(\Prep(p)) \subseteq (\Prep(p)).$$ This implies that $c \in \Aut(J(p))$ and, since $J(h) = J$ and $p$ commutes with an iterate of $h$, we have $c \in \Aut(J)$.\\

    Now, suppose $f$ is exceptional and so is $g$. If $f$ is conjugated to a power map, then after a change of coordinate, we may assume that $f = \xi_1x^{d_1}$ and $g = \xi_2 x^{d_2}$ for some roots of unity $\xi_1$, $\xi_2$ and positive integers $d_1$ and $d_2$, where $d_1 \neq d_2$.  
    The assumption implies that there are infinitely many $\lambda$ such that $f^n(\lambda) = g^n(\lambda) = c(\lambda)$ for some $n \in \N$. Then, similarly, we have 
    $$C \coloneqq V(y - c(x))$$ contains infinitely many preperiodic points under $(x^2, y^2)$. By the Manin-Mumford conjecture \cite{MC95}, we have 
    $$ c(x) = \mu x^d$$
    for some root of unity $\mu$ and a non-negative integer $d$, which in particularly implies one of the three conditions in the statement holds.

    Now, suppose $f$ is conjugated to a Chebyshev polynomial of degree $>1$. Then after a suitable change of coordinate, we assume 
    $$ f = \epsilon_1 T_{d_1}$$
    $$ g= \epsilon_2 T_{d_2}$$
    where $\epsilon_1, \epsilon_2 \in \{\pm 1\}$ with $d_1 \neq d_2$. The assumption implies that there are infinitely many $\lambda$ such that 
    $$ f^n(\lambda) = g^n(\lambda) = c(\lambda)$$
    for some $n \in \N$. Then, we have similarly that
    $$ C \coloneqq V(y - c(x))$$ contains infinitely many preperiodic points of $(T_2,T_2)$. If $c$ is a constant, then again this implies that $c \in \Prep(f) = \Prep(g)$. Now, suppose $c$ is not a constant. Let $$\pi(x) = (x + x^{-1})/2,$$ we have 
    $$\overline{(\pi, \pi)^{-1}(C)}$$
    contains infinitely mamy preperiodic points under $(x^2,y^2)$ and by the Manin-Mumford conjecture, there exists a pair of coprime integers $(a,b)$ and a root of unity $\xi$ such that 
    $$  \overline{\{(t^a, \xi t^b) : t \in \C^*\} } \subseteq \overline{(\pi, \pi)^{-1}(C)}.$$
    Therefore, we have 
    $$ c\left(\frac{t^a + t^{-a}}{2}\right) = \frac{\xi t^b + \xi^{-1}t^{-b}}{2}.$$
    Since $a$ and $b$ are coprime, this implies that if $\deg(c) > 1$ then $a = 1$ and $\deg(c) = b$. Thus, we have 
    $$ c\left(\frac{ t + t^{-1}}{2}\right) = \frac{\xi t^b + \xi^{-1}t^{-b}}{2},$$
    which implies that $c = \pm T_b$ and $\xi \in \{\pm 1\}$.
\end{proof}

The following proposition handles the case when $c$ is a constant and lives in $\Prep(f) = \Prep(g)$.
\begin{prop}\label{prop: poly-case-c-constant}
    Let $f$ and $g$ be polynomials defined over $\C$ such that $\deg(f) \neq \deg(g)$ and $\Prep(f) = \Prep(g)$. Let $c \in \Prep(f) = \Prep(g)$. Then the set of $n \in \N$ such that 
    \begin{equation}\label{eq: poly-case-arith-progress-constant}
        f^n(x) = g^n(x) = c
    \end{equation}
    admits solution is a finite union of arithmetic progressions.
\end{prop}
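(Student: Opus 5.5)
The plan is to reduce the problem to the structure of the finite forward orbit of the constant $c$, using the classification of polynomials with common preperiodic points. Since $\deg(f)\neq\deg(g)$, neither map is linear, so both have degree at least $2$. Equation (\ref{eq: poly-case-arith-progress-constant}) has a solution exactly when there is $x$ with $f^n(x)=c$ and $g^n(x)=c$, that is, when
\[
f^{-n}(c)\cap g^{-n}(c)\neq\emptyset .
\]
Because $\Prep(f)=\Prep(g)$ and $c$ lies in this set, every such $x$ also lies in $\Prep(f)=\Prep(g)$. The question is therefore about backward orbits of $c$ inside the common preperiodic set.

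As in Proposition~\ref{prop: poly-infinite-sol-prep}, I would first use $\Prep(f)=\Prep(g)$ and \cite{BD11} to get a common Julia set $J$. I would then split into the non-exceptional and exceptional cases.

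\textbf{Non-exceptional case.} By \cite{SS95}, after a change of coordinates we have $f=\xi_1h^{k_1}$ and $g=\xi_2h^{k_2}$ with $k_1\neq k_2$. Here $\xi_1,\xi_2$ lie in the finite group $\Aut(J)$ of rotations of order dividing $s$, and $h=x^rR(x^s)$. These rotations satisfy $h\circ\sigma=\sigma^{r}\circ h$. Using this relation, $f^n=\sigma_n\circ h^{k_1n}$ and $g^n=\tau_n\circ h^{k_2n}$, where $\sigma_n,\tau_n\in\Aut(J)$ are given by finite-state recursions. Hence the pair $(\sigma_n,\tau_n)$ is eventually periodic in $n$.

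For fixed $(\sigma,\tau)$, the condition becomes the existence of $x$ with $h^{k_1n}(x)=\sigma^{-1}(c)$ and $h^{k_2n}(x)=\tau^{-1}(c)$. Both targets lie in the finite set $\Aut(J)\cdot\Orb_h(c)\subseteq\Prep(h)$. Assume $k_1<k_2$ and put $y=h^{k_1n}(x)$. Then $y$ ranges over all of $\sigma^{-1}(c)$ (since $h^{k_1n}$ is surjective), and the condition reduces to
\[
h^{(k_2-k_1)n}\bigl(\sigma^{-1}(c)\bigr)=\tau^{-1}(c).
\]
Since $\sigma^{-1}(c)$ is preperiodic for $h$, the sequence $h^{j}(\sigma^{-1}(c))$ is eventually periodic in $j$. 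So the set of $n$ satisfying this condition is a finite union of arithmetic progressions. Intersecting with the progressions on which $(\sigma_n,\tau_n)=(\sigma,\tau)$, and taking the union over the finitely many pairs, gives the result.

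\textbf{Exceptional case.} This is the power-map and Chebyshev case. For $f=\xi_1x^{d_1}$ and $g=\xi_2x^{d_2}$ with $c\neq0$, the two equations $f^n(x)=c=g^n(x)$ become
\[
x^{d_1^n}=\zeta_n c,\qquad x^{d_2^n}=\zeta'_n c,
\]
where $c$ is a root of unity and $\zeta_n,\zeta'_n$ are eventually periodic. This is exactly the setting of Lemma~\ref{lem: power-reduce-to-gcd} with $d_3=d_4=0$, and Corollary~\ref{prop: poly-power-arith} (equivalently Proposition~\ref{prop: gcd-arithmetic-progressions}) handles it. The case $c=0$ is trivial, since $x=0$ always works. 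For Chebyshev maps $\pm T_{d_i}$, I would lift through $\pi(t)=(t+t^{-1})/2$. The condition $\pm T_{d_1^n}(x)=c=\pm T_{d_2^n}(x)$ becomes the existence of $t$ with $t^{d_1^n}=\epsilon\, w^{\pm1}$ and $t^{d_2^n}=\epsilon' w^{\pm1}$, where $\pi(w)=c$ with $w$ a root of unity, the signs are eventually periodic, and there are finitely many sign choices. Each choice is again covered by Corollary~\ref{prop: poly-power-arith}, and the union is a finite union of arithmetic progressions.

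The main obstacle I expect is the bookkeeping in the non-exceptional case. One needs the commutation rule $h\circ\sigma=\sigma^{r}\circ h$ to be precise enough that $(\sigma_n,\tau_n)$ is genuinely eventually periodic, and one needs to handle the degenerate case where $J$ is a circle or segment (and $h$ is exceptional) consistently. I would address the first point by working in the finite group $\Aut(J)$ and treating $n\mapsto(\sigma_n,\tau_n)$ as an orbit of a self-map of a finite set.
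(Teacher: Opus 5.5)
Your proposal is correct and is essentially the paper's proof. Schmidt--Steinmetz gives $f=\sigma_1\circ h^{k_1}$ and $g=\sigma_2\circ h^{k_2}$; pulling the rotations through $h$ reduces solvability to an eventually periodic condition $h^{(k_2-k_1)n}(\cdot)=\cdot$ at the preperiodic point $c$; and the power-map and Chebyshev cases (the latter via $\pi(t)=(t+t^{-1})/2$) are handled by Corollary~\ref{prop: poly-power-arith}. One small slip: $\deg(f)\neq\deg(g)$ alone does not exclude linear maps; it is $\Prep(f)=\Prep(g)$ that forces both degrees to be at least $2$, apart from a trivial constant-versus-finite-order-linear case, and the paper tacitly assumes $\deg f,\deg g>1$ here as well.
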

\begin{proof}
    Let us first suppose $f$ is not linearly conjugated to a special polynomial. Then by \cite{SS95}, there exists a polynomial $h$ with $J = J(h) = J(g) = J(f)$ such that, after a suitable change of coordinate, 
    $$ f = \sigma_1 \circ h^{k_1},$$
    $$ g = \sigma_2 \circ h^{k_2},$$
    $$ h(x) = x^r R(x^s),$$
    where $R(x)$ is a non-constant polynomial, $s, k_1, k_2 \in \N^+$, $r \in \N$ and $$\sigma_1(x) = \xi_1x ,~\sigma_2(x) = \xi_2 x \in \Aut(J)$$ with some roots of unity $\xi_1, \xi_2$ of order dividing $s$.

    Let us denote $d \coloneqq\deg(h)$. Without loss of generality, we assume $k_2 > k_1$. For any $n \in \N^+$, the existence of a $x_n \in \C$ such that Equation (\ref{eq: poly-case-arith-progress-constant}) is satisfied is equivalent to the existence of $x_n \in \C$ satisfying the following two equations:
    \begin{equation}\label{eq: non-special-constant-1}
        h^{k_1n}(x_n) = c \xi_1^{-(d^{k_1n} - 1)/(d^{k_1} -1)},
    \end{equation}
    \begin{equation}\label{eq: non-special-constant-2}
        h^{(k_2- k_1)n}(c \xi_1^{-(d^{k_1n} - 1)/(d^{k_1} -1)}) = c \xi_2^{- (d^{k_2n}-1)/(d^{k_2} -1)}.
    \end{equation}

    Note that the Equation (\ref{eq: non-special-constant-2}) is equivalent to 
    \begin{equation}\label{eq: non-special-constant-3}
        h^{(k_2 -k_1)n}(c) = c \cdot\xi_2^{- (d^{k_2n}-1)/(d^{k_2} -1)} \xi_1^{d^{(k_2-k_1)n}(d^{k_1n}-1)/(d^{k_1} -1)}.
    \end{equation}
    Also, since $\xi_1, \xi_2$ are roots of unity of order dividing $s$, the expression
    $$ c \cdot \xi_2^{- (d^{k_2n}-1)/(d^{k_2} -1)} \xi_1^{d^{(k_2-k_1)n}(d^{k_1n}-1)/(d^{k_1} -1)}$$
    can only take on finitely many values as $n$ varies in $\N^+$. Moreover, for any $y \in \C$ that it can equal to, the set 
    $$ \left\{n \in \N^+ : c \cdot \xi_2^{- (d^{k_2n}-1)/(d^{k_2} -1)} \xi_1^{d^{(k_2-k_1)n}(d^{k_1n}-1)/(d^{k_1} -1)} = y\right\}$$
    is a finite union of arithmetic progressions. Thus, the set of $n \in \N^+$ such that Equation (\ref{eq: non-special-constant-3}), and hence also Equation (\ref{eq: non-special-constant-2}), holds is a finite union of arithmetic progressions. 

    Note that for any $n \in \N^+$, we can always find a $x_n \in \C$ such that Equations (\ref{eq: non-special-constant-1}) holds. Therefore, we conclude that the set of $n \in \N$ such that Equations (\ref{eq: non-special-constant-1}) and (\ref{eq: non-special-constant-2}) admit a common solution is a finite union of arithmetic progressions. Hence, the set of $n \in \N^+$ such that Equation (\ref{eq: poly-case-arith-progress-constant}) admits solutions is a finite union of arithmetic progressions.\\

    Now, suppose $f$ and $g$ are linearly conjugated to power maps. Denote $d_1 \coloneqq \deg(f)$ and $d_2 \coloneqq \deg(g)$. After a change of coordinate, we have 
    $$ f(x) = \xi_1 x^{d_1},$$
    $$g(x) =\xi_2 x^{d_2},$$
    and $c$ is also a root of unity or $0$, where $\xi_1, \xi_2$ are roots of unity. 
    
    Then the set of $n \in \N^+$ such that Equation (\ref{eq: poly-case-arith-progress-constant}) admits solutions is the set of $n \in \N^+$ such that there exists a $x_n \in \C$ making
    $$ (\xi_1x^{d_1})^n(x_n) = (\xi_2 x^{d_2})^n(x_n) = c $$
    hold. If $c = 0$, then we can always take $x_n = 0$ to make this holds for any $n \in \N^+$. Suppose $c$ is a root of unity. By Corollary \ref{prop: poly-power-arith}, we conclude that this set of $n \in \N^+$ is a finite union of arithmetic progressions.\\

    Lastly, suppose $f$ and $g$ are conjugated to Chebyshev polynomials. Then after a suitable change of coordinate, we have 
    $$ f = \sigma_1 \circ T_{d_1}$$
    $$ g = \sigma_2 \circ T_{d_2}$$
    and $c \in \Prep(T_{d_1})$, where $\sigma_1, \sigma_2 \in \{\pm x\}$. Let $\pi(x) = (x + x^{-1})/2$. We have that 
    $$ f \circ \pi = \pi \circ \sigma_1 \circ x^{d_1},$$
    $$ g \circ \pi = \pi \circ \sigma_2 \circ x^{d_2}.$$
    Then for a $n \in \N^+$, Equation (\ref{eq: poly-case-arith-progress-constant}) admits a solution if and only if there exists a $y \in \pi^{-1}(c)$, which is a root of unity, such that one of the following two systems of equations
    $$ (\sigma_1 x^{d_1})^n = y $$
    $$
    (\sigma_2 x^{d_2})^n = y
    $$
    or 
    $$ (\sigma_1 x^{d_1})^n = y $$
    $$
    (\sigma_2 x^{d_2})^n = y^{-1}
    $$
    admits a solution in $\C^*$. Then, again Corollary \ref{prop: poly-power-arith} helps us conclude that this set of $n \in \N^+$ is a finite union of arithmetic progressions in both cases. Thus, taking a union of these finite unions of arithmetic progressions, we again get that the set of $n \in \N^+$ we are looking for is a finite union of arithmetic progressions. 
    
\end{proof}

The following theorem concludes the case when $\deg(f) \neq \deg(g)$:
\begin{thm}\label{thm: f-g-c->1-fua}
    Let $f$, $g$ and $c$ be polynomials defined over $\C$. Suppose $\deg(f), \deg(g) > 1$. Then the set of $n \in \N$ such that 
    \begin{equation}\label{eq: poly-case-arith-progress}
        f^n(x) = g^n(x) = c(x)
    \end{equation} 
    admits solution is a finite union of arithmetic progressions.
\end{thm}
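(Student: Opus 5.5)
The plan is to reduce Theorem~\ref{thm: f-g-c->1-fua} to the propositions already established. If $\deg(f)=\deg(g)$, Proposition~\ref{prop: degree-eq-n-case} gives the conclusion directly. So assume $\deg(f)\neq\deg(g)$.

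If only finitely many $\lambda\in\C$ satisfy $f^n(\lambda)=g^n(\lambda)=c(\lambda)$ for some $n$, then for each such $\lambda$ the set $\{n : f^n(\lambda)=g^n(\lambda)=c(\lambda)\}$ is a finite union of arithmetic progressions. Indeed, if $\lambda$ is not preperiodic for $f$, then $f^n(\lambda)=c(\lambda)$ holds for at most one $n$. If $\lambda$ is preperiodic for $f$, then the sequence $f^n(\lambda)$ is eventually periodic, so the condition $f^n(\lambda)=c(\lambda)$ cuts out a finite union of arithmetic progressions (together with finitely many singletons). The same holds for $g$, and intersections and finite unions preserve this form. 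Hence we may assume there are infinitely many such $\lambda$. Proposition~\ref{prop: poly-infinite-sol-prep} then gives $\Prep(f)=\Prep(g)$ and one of three alternatives. Alternative (3), $c$ a preperiodic constant, is exactly Proposition~\ref{prop: poly-case-c-constant}.

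For alternatives (1) and (2), I would follow the structure of the proof of Proposition~\ref{prop: poly-case-c-constant} and split according to whether $f$ is exceptional.

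\emph{Non-exceptional case.} By \cite{SS95}, after a change of coordinates $f=\xi_1 h^{k_1}$ and $g=\xi_2 h^{k_2}$, where $h=x^rR(x^s)$, $k_1<k_2$, and the rotations $\sigma_i(x)=\xi_i x$ lie in $\Aut(J)$ and commute with $h$ up to powers. Solving the system is then equivalent to finding $x$ with $y=h^{k_1n}(x)$ satisfying $h^{k_1n}(x)=\epsilon_n\, c(x)$ and $h^{(k_2-k_1)n}(y)=\epsilon'_n\, y$. Here $\epsilon_n,\epsilon'_n$ are roots of unity of order dividing $s$, and each takes a fixed value on each class of a finite union of arithmetic progressions. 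Fix such a class. In case (2), $c$ is an automorphism of $J$, so it is a rotation, hence of the form $c=\sigma\circ h^0$. In case (1), $\Prep(c)=\Prep(h)$, so again by \cite{SS95} we have $c=\sigma h^{k_3}$. Thus the equation becomes $h^{k_1 n}(x)=\zeta\, h^{k_3}(x)$ for a root of unity $\zeta$, which has solutions exactly when a compatibility condition on orbits of roots of unity under the maps $x\mapsto$ (rotation)$\circ h^k$ holds. I would then try to show that solvability depends only on $n$ modulo some period, up to finitely many exceptional $n$ with $k_1 n\le k_3$. Finiteness enters as follows: for large $n$ the map $x\mapsto h^{k_1 n - k_3}$ composed with rotations is surjective onto $\C$, so the only constraint is the matching condition $h^{(k_2-k_1)n}(y)=\epsilon' y$. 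This constraint is satisfied by $y=0$ whenever $h(0)=0$ (i.e.\ $r\ge1$); otherwise I would reduce it to the constant-$c$ analysis of Proposition~\ref{prop: poly-case-c-constant}.

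\emph{Exceptional cases.} For power maps, Proposition~\ref{prop: poly-infinite-sol-prep} shows $c=\mu x^{d_3}$ with $\mu$ a root of unity, so the system is precisely the one in Corollary~\ref{prop: poly-power-arith}, apart from the trivial solution $x=0$, which makes the set all of $\N$. For Chebyshev maps, $c=\pm T_b$ (or $c$ is linear, i.e.\ $\pm x$), and I would lift via $\pi(x)=(x+x^{-1})/2$ as in Proposition~\ref{prop: poly-case-c-constant}. This gives a finite union, over sign choices $y\mapsto y^{\pm1}$, of power-map systems $(\sigma_1x^{d_1})^{n}=\pm x^{\pm b}$ and $(\sigma_2x^{d_2})^{n}=\pm x^{\pm b}$, each handled by Corollary~\ref{prop: poly-power-arith} (after using $x\mapsto x^{-1}$ to make exponents nonnegative). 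The main obstacle I expect is the non-exceptional case (1): to obtain genuine periodicity in $n$, one has to keep careful track of how the roots of unity $\xi_i^{(d^{kn}-1)/(d^k-1)}$ interact with $h$.
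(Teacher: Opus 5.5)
Your reductions (the degree split, the case of finitely many $\lambda$, Proposition~\ref{prop: poly-infinite-sol-prep}, the constant case, and the solution $x=0$ when $h(0)=0$ or when $f,g$ are power maps) match the paper. However, the non-exceptional case with $r=0$, which is where the real work lies, is not handled, and the reasoning you sketch there is wrong.

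You claim that for large $n$ ``the only constraint is the matching condition $h^{(k_2-k_1)n}(y)=\epsilon' y$''. If that were true, every large $n$ would be admissible, since $h^{(k_2-k_1)n}(y)-\epsilon' y$ always has a root. The error is this. Write $w=h^{k_3}(x)$. Then $h^{k_1n}(x)=\zeta h^{k_3}(x)$ says $h^{k_1n-k_3}(w)=\zeta w$, so $y=\zeta w$ is itself forced to satisfy a second twisted-periodicity condition. Surjectivity of $h^{k_3}$ only lets you lift $w$ back to $x$. So what you need is a \emph{common} solution of two twisted-periodicity equations for two different iterates of $h$. ``Reduce to the constant-$c$ analysis'' gives no mechanism for deciding when such a common solution exists.

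The missing idea is that for $r=0$ the polynomial $h=R(x^s)$ satisfies $h\circ\sigma_i=h$ for every rotation of order dividing $s$. Two consequences follow.
\begin{enumerate}
\item $f^n=\sigma_1h^{k_1n}$ and $g^n=\sigma_2h^{k_2n}$ exactly, so the roots of unity $\xi_i^{(d^{kn}-1)/(d^k-1)}$ you were worried about disappear.
\item Put $y=h^{k_3}(x)$. Use $h^{k_2n-k_3}=h^{(k_2-k_1)n}\circ h^{k_1n-k_3}=h^{k_1n-k_3}\circ h^{(k_2-k_1)n}$ together with $h^{k_1n-k_3}(y)=\sigma_1^{-1}\sigma_3y$ and $h^{k_2n-k_3}(y)=\sigma_2^{-1}\sigma_3y$. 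This forces $\sigma_1^{-1}\sigma_3y=\sigma_2^{-1}\sigma_3y$.
\end{enumerate}
Hence there are two possibilities. Either $\sigma_1=\sigma_2$: then any $x_0$ with $h^{k_3}(x_0)=\sigma_1\sigma_3^{-1}y_0$, for a fixed point $y_0$ of $h$, is a solution for every large $n$. Or every solution has $h^{k_3}(x)=0$: then the admissible $n$ are exactly those with $h^{k_1n-k_3}(0)=0=h^{k_2n-k_3}(0)$, which form a finite union of arithmetic progressions.

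A second, smaller gap is in the Chebyshev case. Write $c=\pm T_{d_3}$. The mixed-sign lifts, such as $(\sigma_1x^{d_1})^{\circ n}=\sigma_3x^{d_3}$ together with $(\sigma_2x^{d_2})^{\circ n}=\sigma_3x^{-d_3}$, are not instances of Corollary~\ref{prop: poly-power-arith}. The substitution $x\mapsto x^{-1}$ inverts both exponents at once, so it cannot make them agree. Yet the Corollary, through Proposition~\ref{prop: gcd-arithmetic-progressions}, needs the same shift $d_3$ in both equations. By Lemma~\ref{lem: power-reduce-to-gcd}, this system amounts to
\[
2\gcd(d_1^n-d_3,\,d_2^n+d_3)\mid b(d_1^n-d_3)-a(d_2^n+d_3), \qquad a,b\in\{0,1\},
\]
which requires its own argument. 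The paper bounds the $2$-adic valuation of this gcd uniformly in $n$, using parity and the lifting-the-exponent lemma. It then partitions $n$ according to the exact valuation; each piece is a finite union of arithmetic progressions.
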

\begin{proof}
    If the set of $n$ such that Equation (\ref{eq: poly-case-arith-progress}) admits solution is finite, then obviously it is also a finite union of arithmetic progressions. Suppose this set of $n$ is infinite but the set of solutions 
    $$ S \coloneqq \{x_n : f^n(x_n) = g^n(x_n) = c(x_n), n \in \N\}$$
    is finite. Then for each $x_0 \in S$ that satisfies 
    $$f^n(x_0) = g^n(x_0) = c(x_0)$$
    for infinitely many $n \in  N$, we pick the minimum positive integer $m$ such that $f^{n +m}(x_0) = f^n(x_0) = c(x_0) $
    for some $n \in \N$. Then we have that 
    $$ I_{x_0} \coloneqq \{ n \in \N : f^n(x_0) = g^n(x_0) = c(x_0) \} = \{mn +l : l\in \N, \forall n \in \N\}.$$
    Thus the set of indices we are looking for is 
    $$ \bigcup_{x_0 \in S} I_{x_0}$$
    which is a finite union of arithmetic progressions.

    Now, suppose that $S$ is infinite. If $\deg(f) = \deg(g)$, then we can conclude the theorem by Proposition \ref{prop: degree-eq-n-case}. From now on we assume $\deg(f) \neq \deg(g)$, by Proposition \ref{prop: poly-infinite-sol-prep}, there are three cases
    \begin{enumerate}
        \item $\deg(c) > 1$ and $\Prep(f) = \Prep(g) = \Prep(c)$;
        \item $\deg(c) = 1$, $\Prep(f) = \Prep(g)$ and $c \in \Aut(J)$, where $J = J(f) = J(g)$;
        \item $c$ is a constant and $c \in \Prep(f) = \Prep(g)$.
        \end{enumerate}

    In case (3), Proposition \ref{prop: poly-case-c-constant} concludes that the set of $n \in \N$ that we are looking for is a finite union of arithmetic progressions. So from now on, we assume that $c$ is not a constant.\\

    Suppose $f$ is not linearly conjugated to a special polynomial. Then by \cite{SS95}, there exists a polynomial $h$ with $J = J(h) = J(g) = J(f)$ such that, after a suitable change of coordinate, 
    $$ f = \sigma_1 \circ h^{k_1},$$
    $$ g = \sigma_2 \circ h^{k_2}$$
    $$ c = \sigma_3 \circ  h^{k_3},$$
    where $k_1, k_2 \in \N^+$ and $k_3 \in \N$ and $\sigma_1, \sigma_2$ and $\sigma_3 \in \Aut(J)$. Note that in case (2), we have $k_3 = 0$. Then after a further suitable change of coordinate, for instance see the proof of the main theorem of \cite{SS95}, we have $h = x^r R(x^s)$ and $\sigma_1, \sigma_2, \sigma_3$ are multiplications by roots of unity of order dividing $s$, where $r, s \in \N$ and $R$ is a polynomial.
    
    If $r \neq 0$, it is obvious that $0$ is a solution of 
    $$ (\sigma_1 \circ h^{k_1}(x))^n = (\sigma_2 \circ h^{k_2}(x))^n = \sigma_3 \circ h^{k_3}(x)$$
    for all $n \in \N$. 

    Suppose $r = 0$. Our assumption implies that there are infinitely many $n \in \N$ such that 
    $$ \sigma_1 h^{k_1n}(x) = \sigma_2 h^{k_2n}(x)  =\sigma_3 h^{k_3}(x)$$
    admits solutions. Without loss of generality, we assume $k_1 > k_2$. This is equivalent to that there are infinitely many $n \in \N$ such that 
    
    \begin{equation}\label{eq: non-speical-infty-solution-1}
        h^{k_2n - k_3}(y) = \sigma_2^{-1}\sigma_3(y)
    \end{equation}
   \begin{equation}\label{eq: non-speical-infty-solution-2}
       h^{k_1n - k_3}(y) = \sigma^{-1}_1\sigma_3 (y)
   \end{equation}
    admit common solutions. Note that for any $n$ large enough so that $k_2 n > k_3$, if Equations (\ref{eq: non-speical-infty-solution-1}) and (\ref{eq: non-speical-infty-solution-2}) admit $y_n \in \C$ as a common solution, then we have that
\begin{align}\label{eq: non-speical-infty-solution-3}
   \sigma_1^{-1}\sigma_3 (y_n) =  h^{(k_1 -k_2)n}\circ h^{k_2n - k_3}(y_n) = h^{(k_1-k_2)n} (\sigma^{-1}_2 \sigma_3(y_n)) \nonumber\\
   = h^{(k_1 -k_2)n}(y_n).
\end{align}
Furthermore, Equations (\ref{eq: non-speical-infty-solution-1}), (\ref{eq: non-speical-infty-solution-2}) and (\ref{eq: non-speical-infty-solution-3}) implies that
    
    $$ \sigma^{-1}_1 \sigma_3(y_n) = h^{k_2n -k_3} \circ h^{(k_1- k_2)n}(y_n)=h^{k_2n - k_3}(\sigma^{-1}_1\sigma_3(y_n)) = h^{k_2n - k_3}(y_n) .$$ Hence,in summary, for any $n \in \N$ large enough so that $k_2n > k_3$, if Equations (\ref{eq: non-speical-infty-solution-1}) and (\ref{eq: non-speical-infty-solution-2}) admit a solution $y_n\in \C$, then we must have $\sigma_1 = \sigma_2$ unless $$y_n = 0$$ for all such $n$. 

   We fist handle the case that $\sigma_1 \neq \sigma_2$. In this case, the above implies that the only possible $x_0 \in \C$ such that 
    $$ f^n(x_0) = g^n(x_0) = c(x_0)$$
    for some positive integer $n > k_3/k_2$ are those $x_0 \in \C$ such that $c(x_0) = 0$. Thus, the set of $n \in \N$ such that $n > k_3/k_2$ and 
    $$ f^n(x) = g^n(x) = c(x)$$
    admits solutions is given by the set of $n \in \N$ such that 
    $$ h^{nk_1 - k_3}(0) = 0$$
    $$ h^{nk_2 -k_3}(0) = 0$$
    and $n > k_3/k_2$. Since the set of $n \in \N$ such that $n > k_3/k_2$ and
    $$ h^{nk_1 - k_3}(0) = 0$$
    is a finite union of arithmetic progressions and also set of $n$ such that $$ h^{nk_2 - k_3}(0)=0$$ is a finite union of arithmetic progressions as well. We have 
    that the set of $n \in \N$ such that Equation (\ref{eq: poly-case-arith-progress}) admits a common solution is a finite intersection of finite unions of arithmetic progression and so is again a finite union of arithmetic progressions.

    Suppose $\sigma_1 = \sigma_2$, we take $y_0$ to be a fixed point of $h$. Let $x_0 \in h^{-k_3}( \sigma_1 \sigma_3^{-1}y_0)$. Then we have 
    $$ \sigma_1 h^{k_1n}(x_0) = \sigma_1 h^{k_2n }(x_0) = \sigma_3 h^{k_3}(x_0) = \sigma_1 y_0$$
    for all $ n > k_3/k_2$. Thus the set of indices we are looking for is a finite union of arithmetic progressions in particular.

    Suppose $f$ is conjugated to a power map. Then after a suitable change of coordinate, we have 
    $$ f= \sigma_1 x^{d_1}$$
    $$ g = \sigma_2 x^{d_2}$$
    $$ c = \sigma_3 x^{d_3}$$
    where $d_1, d_2 >1$, $d_3 \in \N^+$ and $\sigma_1, \sigma_2, \sigma_3$ are roots of unity. Then we obviously have that 
    $$ f^n(0) = g^n(0) = c(0)$$
    holds for all $n \in \N$.

    Suppose $f$ is conjugated to a Chebyshev polynomial, then, after a suitable change of coordinate, we have 
    $$ f = \sigma_1 \circ T_{d_1}$$
    $$ g = \sigma_2 \circ T_{d_2}$$
    $$ c =  \sigma_3 \circ T_{d_3}$$
    where $d_1, d_2 > 1$, $d_3 \in \N^+$, $T_0(x) = x$ and $\sigma_1, \sigma_2, \sigma_3 \in \{\pm x\}$. Let $\pi(x) = (x+ x^{-1})/2$. We have 
    $$ f \circ \pi = \pi \circ \sigma_1 \circ x^{d_1},$$
    $$ g \circ \pi = \pi \circ \sigma_2 \circ x^{d_2},$$
    $$ c \circ \pi = \pi \circ \sigma_3 \circ  x^{d_3}.$$

    Then, the equation 
    $$ f^n(x) = g^n(x) = c(x)$$
    admits solutions in $\C$ for a $n \in \N$ if and only if one of the following systems of equations
    $$ (\sigma_1 x^{d_1})^n = (\sigma_2 x^{d_2})^n = \sigma_3x^{d_3} $$
    or 
    $$ (\sigma_1 x^{d_1})^n = \sigma_3x^{d_3} $$
    $$ (\sigma_2 x^{d_2})^n = \sigma_3x^{-d_3}$$
    admits solutions in $\C^*$. In the first case, Corollary \ref{prop: poly-power-arith} will conclude that the set of $n \in \N^+$ that it admits solution is a finite union of arithmetic progressions. 

    For the second case, since $\sigma_1, \sigma_2, \sigma_3$ are of order $2$, it is enough to show that the set of $n \in \N^+$ such that 
    $$  x^{d^n_1} = \sigma'^{-1}_1 \circ \sigma_3 x^{d_3}$$
    $$  x^{d^n_2} = \sigma'^{-1}_2 \circ \sigma_3 x^{-d_3}$$
    admit solutions for some $\sigma'_1, \sigma'_2 \in \{ \pm x\}$ is a finite union or arithmetic progressions. Equivalently, we only need to show that for some fixed $\eta_1, \eta_2 \in \{ \pm 1\}$ the set of $n \in \N^+$ such that $\min\{d^n_1, d^n_2\} > d_3$ and
    $$  x^{d^n_1 - d_3} = \eta_1$$
    $$ x^{d^n_2 + d_3} = \eta_2$$
    admit a solution is a finite union of arithmetic progression. 

    Applying Lemma \ref{lem: power-reduce-to-gcd}, we have that this is equivalent to show that for some fixed pair of $a,b \in \{0,1\}$, the set of $n \in \N^+$ such that $\min\{d^n_1, d^n_2\} > d_3$ and
    \begin{equation}\label{eq: chebyshev-diff-sign-1}
        2 \cdot \gcd (d^n_1 -d_3, d^n_2 + d_3) ~ \mid ~ b(d^n_1 - d_3) - a(d^n_2 + d_3)
    \end{equation}
    is a finite union of arithmetic progressions.

    If $d_1$ and $d_2$ have different parity, without loss of generality say $d_1$ is even and $d_2$ is odd, then when $n$ is sufficiently large, we have 
    $$ v_2(\gcd (d^n_1 -d_3, d^n_2 + d_3)) \leq v_2(d_3).$$
    If $d_1$ and $d_2$ have the same parity, then $2~\mid ~ d_1+ d_2$ and, by the Lifting-the-exponent lemma,
    $$ v_2(\gcd (d^n_1 -d_3, d^n_2 + d_3)) \leq v_2(d^n_1 + d^n_2) \leq v_2(d_1 + d_2).$$

    Thus, we have there exists a $l \in \N^+$ independent from $n$ such that 
    $$ v_2(\gcd (d^n_1 -d_3, d^n_2 + d_3)) \leq l,$$
    for all $n \in \N^+$. 

    Now, for each $l_1 \in \N$ such that $0 \leq l_1 \leq l$, we have the set of $n \in \N^+$ such that
    $$v_2(\gcd (d^n_1 -d_3, d^n_2 + d_3)) \geq l_1 $$
    is exactly the set of $n \in \N^+$ such that 
    $$ d^n_1 \equiv d_3 \pmod{2^{l_1}},$$
    $$ d^n_2 \equiv -d_3 \pmod{2^{l_1}},$$
    which is a finite union of arithmetic progressions. Since the complement of a finite union of arithmetic progressions is again a finite union of arithmetic progressions, we have the set of $n \in \N^+$, denoted as $L_{l_1}$, such that 
    $$v_2(\gcd (d^n_1 -d_3, d^n_2 + d_3)) = l_1 $$
    is a finite union of arithmetic progressions. 

    Note that for each $l_1$ as above, the set of $n \in L_{l_1}$ such that the expression (\ref{eq: chebyshev-diff-sign-1}) holds is the set 
    $$ L'_{l_1} \coloneqq \{n \in L_{l_1} : b(d^n_1 - d_3) - a(d^n_2 + d_3) \equiv 0 \pmod{2^{l_1 + 1}}\}$$
    which is again a finite union of arithmetic progressions.

    Note that the set of $n \in \N^+$ we are looking for is different from $\bigcup_{0 \leq l_1 \leq l} L'_{l_1}$ only by a finite set. Thus it is again a finite union of arithmetic progressions.

\end{proof}

Now, we put a summary and prove the main result of this subsection completely.

\begin{thm}[Theorem \ref{thm: main-GDML-polynomial}]
    Let $f$ and $g$ be non-constant polynomials defined over $\C$ and $c$ be a polynomial defined over $\C$. Then the set of $n \in \N$ such that 
    \begin{equation}\label{eq: poly-case-arith-progress-general}
        f^n(x) = g^n(x) = c(x)
    \end{equation} 
    admits solution is a finite union of arithmetic progressions.
\end{thm}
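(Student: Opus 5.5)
The plan is to reduce everything to the results already established, splitting by the degrees of $f$ and $g$. If $\deg(f),\deg(g)>1$, Theorem~\ref{thm: f-g-c->1-fua} gives the conclusion directly; when $\deg(f)=\deg(g)$ this is Proposition~\ref{prop: degree-eq-n-case}. So the new work concerns the case where at least one of $f,g$ is an automorphism of $\A^1$, i.e. has degree $1$.

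First I would treat the case where both $f$ and $g$ are linear. Then $F=(f,g,\id)$ acts on $X=(\P^1)^3$ through the algebraic group action of $\mathrm{PGL}_2^3$. The set of $n$ for which $f^n(x)=g^n(x)=c(x)$ has a solution in $\A^1$ is $\{n: F^n(C)\cap \Delta\neq\emptyset\}$, with $C$ the graph curve and $\Delta$ the diagonal. I would take closures in $(\P^1)^3$ and apply Theorem~\ref{thm: group-GDML}. Points at infinity must be discarded, since we only want solutions in $\A^1$. For polynomial $f,g,c$ the only extra intersection point is $(\infty,\infty,\infty)$, which lies in $F^n(\overline C)\cap\Delta$ for every $n$. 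To avoid it I would instead use the non-projective setting of Remark~\ref{rmk: non-projective-GDML}. Concretely, I apply the argument of that proof on $X=\A^3$ with the group $\mathrm{Aff}^3$: the set $V_0=\mathrm{pr}_G(\rho_Z^{-1}(V))$ is constructible, and Skolem--Mahler--Lech type results for constructible sets \cite{BGT10} make $\{n\in\N: g^n\in V_0\}$ a finite union of one-sided arithmetic progressions.

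Next comes the mixed case: say $\deg(f)=1$ and $\deg(g)\ge 2$. I expect the solution set $S=\{\lambda: f^n(\lambda)=g^n(\lambda)=c(\lambda)\text{ for some } n\}$ to be finite here. This is the content of \cite[Theorem 1]{HT17} and \cite{NZ25}, whose height/equidistribution argument needs no freeness in the mixed-degree setting. Roughly, $\hat h_g(g^n(\lambda))=d^n\hat h_g(\lambda)$, whereas $f^n(\lambda)=c(\lambda)$ keeps heights of bounded growth in $n$ for non-degenerate $c$. This forces $\lambda$ to be preperiodic for $g$ with bounded height, and Northcott then gives finiteness. The degenerate sub-case $c=f^{m}$ has to be handled separately. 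There the equations become $f^{n-m}(y)=y$ and $g^n(\lambda)=c(\lambda)$, and I would analyze it via periodicity of $f$ (roots of unity) directly. Once $S$ is finite, the argument at the start of the proof of Theorem~\ref{thm: f-g-c->1-fua} applies verbatim. For each $x_0\in S$, the set $I_{x_0}=\{n: f^n(x_0)=g^n(x_0)=c(x_0)\}$ is either finite or eventually periodic, because $x_0$ must be preperiodic under the relevant maps. Hence the union over $S$ is a finite union of arithmetic progressions.

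The main obstacle is the mixed-degree case, specifically justifying that $S$ is finite without the compositional-independence hypothesis. If the height argument leaves infinitely many $\lambda$, I would fall back on the DML route: consider $F=(f,g,\id)$ on $\A^3$ and intersect orbits of points of $C$ with $\Delta$. The solutions $\lambda$ must then lie in $\Prep(g)$, and for $f$ linear, $f^n(\lambda)=c(\lambda)$ with $\lambda$ in a bounded-height set forces eventual periodicity in $n$.
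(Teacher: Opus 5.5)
Your case split and the tools used in each case match the paper's proof:
\begin{enumerate}
\item both maps linear: Theorem~\ref{thm: group-GDML} in the non-projective form of Remark~\ref{rmk: non-projective-GDML}. Your observation that $(\infty,\infty,\infty)$ lies in every $F^n(\overline C)\cap\Delta$ is exactly why the projective version cannot be used.
\item both of degree $>1$: Theorem~\ref{thm: f-g-c->1-fua}.
\item mixed case: finiteness of the solution set, then eventual periodicity of each $I_{x_0}$, with $c=f^m$ for a finite-order linear $f$ handled by hand.
\end{enumerate}

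The gap is the step you flag yourself: finiteness of $S$ in the mixed case is not established by what you wrote. Your references do not cover it. \cite[Theorem 1]{HT17} is about two maps of degree $\ge 2$. \cite{NZ25} assumes compositional independence, which can fail here; for example $f=-x$ and $g=x^3$ commute. The height sketch also does not close. From $d^n\hat h_g(\lambda)=\hat h_g(f^n(\lambda))\le h(\lambda)+O(n)$ you only get $\hat h_g(\lambda)=O(n/d^n)$. That is small but not zero, so $\lambda$ need not be preperiodic. Northcott is also unavailable, because $\lambda$ is a root of $g^n-c$, whose degree grows like $d^n$. Your fallback claim that the solutions lie in $\Prep(g)$ is likewise unjustified: for a fixed small $n$, the solutions of $f^n=g^n=c$ are typically not preperiodic.

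The missing ingredient is a lower bound for $h(\lambda)$ that grows with $n$, taken from the linear equation.
\begin{itemize}
\item If $f(x)=\alpha x+\beta$ with $\alpha$ not a root of unity, then $f^n(\lambda)=c(\lambda)$ reads $\alpha^n=C_1(\lambda)$, where $C_1(x)=(c(x)-\beta')/(x-\beta')$ and $\beta'=\beta/(1-\alpha)$. This gives $n\,h(\alpha)\le \deg(C_1)\,h(\lambda)+O(1)$.
\item For a translation $x+\beta$ you get instead $\log n\le h(c(\lambda)-\lambda)+h(\beta)$.
\end{itemize}
In both cases the upper bound above makes $h(\lambda)$ uniformly bounded, so $n$ is bounded. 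Each of the finitely many remaining $n$ contributes finitely many $\lambda$, since $g^n-c$ is non-constant.

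Over $\C$ this has to be run with Moriwaki heights on a finitely generated field, where $h(\alpha)>0$ for $\alpha$ not a root of unity, or after a suitable specialization. If $f$ has finite order, finiteness follows directly from non-degeneracy.

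Alternatively, simply cite \cite[Proposition 9]{HT17}, which is what the paper does at this point. With either input, the rest of your argument goes through.
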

\begin{proof}
    Suppose $\deg(f) = \deg(g) = 1$, then this follows from Theorem \ref{thm: group-GDML} and Remark \ref{rmk: non-projective-GDML}. Suppose one of $f$ and $g$ is of degree $1$ and the other is strictly greater than $1$. Without loss of generality, we assume that $\deg(g) = 1$ and $\deg(f) > 1$. If $g$ is of finite order and there exists a $m \in \N$ such that $g^m \equiv c$, then we have that every $n \in \{mk: k \in \N^+\}$ will make Equation (\ref{eq: poly-case-arith-progress-general}) admit solutions. Also, \cite[Proposition 9]{HT17} implies that there are only finitely many $\lambda \in \C$, denote this finite set as $S$, that can make make Equation (\ref{eq: poly-case-arith-progress-general}) hold for some $n \not \in \{mk: k \in \N^+\} $. Note that for each such $\lambda \in S$, if there are infinitely many $n \in \N^+ \setminus \{mk: k \in \N^+\} $ such that 
    $$ f^n(\lambda) = g^n(\lambda) = c(\lambda) $$
    holds, then $$\lambda \in \Prep(f) \cap \Prep(g) $$
    and the set 
    $$ S_\lambda \coloneqq \{n \in \N^+ : f^n(\lambda) = g^n(\lambda) = c(\lambda)\}$$
    is a finite union of arithmetic progressions.

    Now, the set of $n \in \N^+$ such that Equation (\ref{eq: poly-case-arith-progress-general}) admits solutions is 
    $$\{mk: k \in \N^+\} \bigcup_{\lambda \in S} S_{\lambda} ,$$
    which is a finite union of arithmetic progressions.

    Suppose, on the other hand, that there doesn't exist a $m \in \N^+$ such that $c \equiv g^n$ for all $n \in \{mk: k \in \N^+\}$. Then, except for possibly two $n_0, n_1 \in \N^+$ such that $f^{n_0} \equiv c$ and $g^{n_1} \equiv c$, \cite[Proposition 9]{HT17} implies that there are only finitely many $\lambda \in \C$ such that 
    $$ f^n(\lambda) = g^n(\lambda) = c(\lambda)$$
    for some $n \in \N^+$. Then apply the above argument again, we obtain that the set of $n \in \N^+$ we are looking for is a finite union of arithmetic progressions.

    Lastly, if $\deg(f), \deg(g) > 1$, then this follows from Theorem \ref{thm: f-g-c->1-fua}.

\end{proof}

\section*{ Acknowledgements}

This project was initiated during the workshop \emph{``Arithmetic Dynamics and Diophantine Geometry''}
held at the Institute for Mathematical Sciences, National University of Singapore. 
The authors thank the organizers and the Institute for their hospitality and support. 
X.Z.\ is grateful to his advisor, Jason Bell, and Junyi Xie for many discussions. S.Y.\ thanks his advisor Junyi Xie for introducing him to this topic. S.Y.\ was supported by the National Natural Science Foundation of China (Grant No.~12271007). X.Z.\ was supported by the Natural Sciences and Engineering Research Council of Canada
 Discovery Grant (RGPIN-2022-02951).

\end{document}